\newcommand{\href}[1]{#1} 
\let\hun\H
\def\Z{{\mathbb Z}}
\def\N{{\mathbb N}}
\def\R{{\mathbb R}}
\def\C{{\mathbb C}}
\def\H{{\mathbb H}}
\def\Q{{\mathbb Q}}
\def\A{{\mathbb A}}
\def\F{{\mathbb F}}
\def\O{{\mathcal O}}
\def\a{{\mathfrak a}}
\def\p{{\mathfrak p}}
\def\m{{\mathfrak m}}
\def\n{{\mathfrak n}}
\def\c{{\mathfrak c}}
\def\D{{\mathfrak D}}
\def\W{{\mathcal W}}
\def\X{{\mathfrak X}}
\def\Qp{{\mathbb{Q}_p}}
\def\AF{{\mathbb{A}_F}}
\def\iotaf{{\iota_\text{f}(\sigma^{-1})}}
\def\congsub{{\Gamma_\mu(\n)}}
\def\E(Q){{E(\mathbb{Q})}}
\def\wt{\widetilde}
\def\bE(Q){{\bar{E}(\mathbb{Q})}}
\def\eps{\varepsilon}
\def\ro{\varrho}
\def\ph{\varphi}
\def\gl2R{\GL_2(\mathbb{R})}
\def\sl2Z{\SL_2(\mathbb{Z})}
\def\af{a_f(n;\sigma)}
\def\Wp{W_{\pi_p}}
\def\Wv{W_{\Pi_v}}
\def\smatrix{(\begin{smallmatrix} a & b \\ c & d \end{smallmatrix})}
\def\gtlv{g_{t,l,v}}
\def\ctl{c_{t,l}(\chi)}
\def\cc{\omega_\pi}
\def\bs{\backslash}
\def\f{{\mathbf f}}
\def\bi{\beta_i}
\def\bj{\beta_j}
\def\bone{\beta_1}
\def\btwo{\beta_2}
\def\qci{q^{c_i}}
\def\qcone{q^{c_1}}
\def\qctwo{q^{c_2}}
\def\valp{\val_p}
\def\1{\bm{1}}
\def\wtpi{\wt{\pi}}
\DeclareMathOperator{\Ima}{Im}
\DeclareMathOperator{\GL}{GL}
\DeclareMathOperator{\SL}{SL}
\DeclareMathOperator{\GSp}{GSp}
\DeclareMathOperator{\Sp}{Sp}
\DeclareMathOperator{\Hom}{Hom}
\DeclareMathOperator{\Aut}{Aut}
\DeclareMathOperator{\Gal}{Gal}
\DeclareMathOperator{\Tr}{Tr}
\DeclareMathOperator{\val}{val}
\newtheorem{IntroThm}{Theorem}
\newtheorem{Thm}{Theorem}[section]
\newtheorem{Lemma}[Thm]{Lemma}
\newtheorem{Prop}[Thm]{Proposition}
\newtheorem{Defn}[Thm]{Definition}
\theoremstyle{definition}
\newtheorem{Question}[Thm]{Question}
\theoremstyle{remark}
\newtheorem{Remark}[Thm]{Remark}
\let\origdoublepage\cleardoublepage
\newcommand{\clearemptydoublepage}{%
  \clearpage{\pagestyle{empty}\origdoublepage}}
\let\cleardoublepage\clearemptydoublepage
\newglossaryentry{computer}
{
name=computer,
description={A programmable machine that receives input data,
               stores and manipulates the data, and provides
               formatted output}
}
\newglossaryentry{dingledorf}
{
type=nomenclature,
name=dingledorf,
description={A person of supposed average intelligence who makes incredibly brainless misjudgments}
}
\newabbreviation{aaaaz}{AAAAZ}{American Association of Amateur Astronomers and Zoologists}
\newglossaryentry{rvec}
{
name={$\mathbf{v}$},
sort={label},
type=symbols,
description={Random vector: a location in n-dimensional Cartesian space, where each dimensional component is determined by a random process}
}
\begin{document}

\pagestyle{empty}
\pagenumbering{roman}

\begin{titlepage}

        \begin{center}
        
        \Huge
        {\bf The Arithmetic of the Fourier Coefficients of Automorphic Forms}

        \vspace*{1.0cm}
        
        \large
        by
        
        \vspace*{0.5cm}

        \Large 
        Tim Davis \\

 \vspace*{1cm}

        \large
        {Submitted in partial fulfilment of the requirements of the degree of\\
        \emph{Doctor of Philosophy}} \\
       
        \vspace*{2cm}

        \vspace*{2cm}

        School of Mathematical Sciences \\
        Queen Mary, University of London\\
        August 2023


        \end{center}
\cleardoublepage
\end{titlepage}

\pagestyle{plain}
\setcounter{page}{2}



\cleardoublepage


\chapter*{Declaration}
\addcontentsline{toc}{chapter}{Declaration}
\markboth{Acknowledgements}{Declaration}
\pagenumbering{roman}

  
 \noindent  I, Tim Davis, confirm that the research included within this thesis is my own work or that where it has been carried out in collaboration with, or supported by others, that this is duly acknowledged below and my contribution indicated. Previously published material is also acknowledged below.   

\noindent I attest that I have exercised reasonable care to ensure that the work is original, and does not to the best of my knowledge break any UK law, infringe any third party’s copyright or other Intellectual Property Right, or contain any confidential material.    

 \noindent I accept that Queen Mary University of London has the right to use plagiarism detection software to check the electronic version of the thesis.    I confirm that this thesis has not been previously submitted for the award of a degree by this or any other university.    

 \noindent The copyright of this thesis rests with the author and no quotation from it or information derived from it may be published without the prior written consent of the author.   
 
 \noindent Signature: Tim Davis
 
 \noindent Date : 31st August 2023

 \noindent Details of collaboration and publications:

\noindent Chapter 3 of this thesis is based on my article \textit{Fourier coefficients of Hilbert modular forms at cusps} \cite{TD}, which appeared in the Ramanujan Journal, 2023.
 \noindent

  


\chapter*{}
\begin{center}
\large
\textbf{To my family}
\end{center}
\cleardoublepage

\chapter*{Abstract}
\addcontentsline{toc}{chapter}{Abstract}
\markboth{Abstract}{Abstract}

This thesis studies modular forms from a classical and adelic viewpoint. We use this interplay to obtain results about the arithmetic of the Fourier coefficients of modular forms and their generalisations. 

In Chapter \ref{Chap2}, we compute lower bounds for the $p$-adic valuation of local Whittaker newforms with non-trivial central character. We obtain these bounds by using the local Fourier analysis of these local Whittaker newforms and the $p$-adic properties of $\eps$-factors for $\GL_1$.

In Chapter \ref{Chap3}, we study the fields generated by the Fourier coefficients of Hilbert newforms at arbitrary cusps. Precisely, given a cuspidal Hilbert newform $f$ and a matrix $\sigma$ in (a suitable conjugate of) the Hilbert modular group, we give a cyclotomic extension of the field generated by the Fourier coefficients at infinity which contains all the Fourier coefficients of $f||_k\sigma$. 

Chapters \ref{Chap2} and \ref{Chap3} are independent of each other and can be read in either order. In Chapter \ref{Chap4}, we briefly discuss the relation between these two chapters and mention potential future work.

\cleardoublepage
\newpage


 \cleardoublepage


\chapter*{Acknowledgments}
\addcontentsline{toc}{chapter}{Acknowledgments}
\markboth{Acknowledgments}{Acknowledgments}

Firstly, I want to thank my supervisor Professor Abhishek Saha whose guidance and support has always been there. Without his infectious knowledge of the subject, this work would not be here. I would also like to thank my yearly panel reviewers, Dr Steve Lester, Dr Behrang Noohi and Dr Felipe Rin\'{c}on for providing me with useful feedback.  Two more people which I would like to thank are Dr Chris Daw and Dr Rachel Newton whose passion for number theory and their encouragement made me think about doing a PhD in the first place. I am very grateful to the Leverhulme Trust who funded this research, grant number RPG-2018-401.

To all of my office friends, Danilo, Elisa, Evelyn, Federica, Gabriele, James, Lexi and Marco, I thank you all for making my time as a PhD student so much more enjoyable. To my great friend Dr Ed Clark a special thank you, our shared interest in mathematics and our discussions over the years have been incredibly influential to me. To Marica, I cannot thank you enough for all the love and support you have given me over the years. This has been invaluable to me. 

Finally, I thank my family, Mum, Dad and Gran, your support no matter what has always kept me going and believing in myself. This work is for you.

\cleardoublepage

\renewcommand\contentsname{Contents}
\addcontentsline{toc}{chapter}{Contents}
\tableofcontents
\cleardoublepage
\phantomsection    



\pagenumbering{arabic}

\pagestyle{fancy}

\fancyhead{}

\fancyhead[CE]{\nouppercase{\leftmark}}
\fancyhead[CO]{\nouppercase{\rightmark}}


\chapter{Background and Motivations}\label{Chap1}

\epigraph{\textit{If numbers aren't beautiful I don't know what is}}{P\'{a}l Erd\hun{o}s}

\section{Introduction}\label{intro}

Mathematicians have been studying number theory and asking number theoretic questions ever since mathematics has been written down. In ancient times the number theoretic questions that were being asked were all about prime numbers. For example, in Ancient Greece it was known that there are infinitely many prime numbers. The questions being thought about in these times were very simple to state, such as, which primes can we write as the sum of two squares and are there infinitely many twin primes? Although, questions like these are often very easy to state the proofs tend to be much more difficult or they are still unsolved to this day. The proofs usually give deep insight into the area. Indeed, for twin primes, it was groundbreaking work by Yitang Zhang \cite{zhang} to show that the gap between consecutive primes is finite infinitely often, let alone showing that this gap is two infinitely often.  There have been successful attempts to refine the method used to reduce the bound provided by Zhang. However, there are currently no realistic methods to prove the conjecture.

Another example is that of Fermat's Last Theorem. This took over 350 years for this problem to finally be solved by Wiles \cite{wiles}. The proof of this theorem is incredibly complicated. Wiles proved this theorem by showing a deep connection between two different mathematical objects, namely, Elliptic curves $E$ defined over $\Q$ and holomorphic functions, $f$, that live on the upper-half plane, which satisfy certain symmetries. 

There are several different ways to formulate this relation. Possibly, the most useful way for our purposes is as follows. To arithmetic and geometric objects, we can associate a complex analytic function known as an $L$-function. The simplest example of an $L$-function is the Riemann zeta function defined by \[\zeta(s)=\sum_{n=1}^\infty\frac{1}{n^s}=\prod_{p \ \text{prime}}(1-p^{-s})^{-1},\] for Re$(s)>1$ and by analytic continuation elsewhere. This function with its deep connection to prime numbers has been studied in great detail in its own right. The subject of the zeros of this function is one of the Millennium Prize problems. 

To $E$ and $f$ we can associate an $L$-function and what Wiles showed is that the $L$-function attached to $E$ is the same as the $L$-function attached to $f$. Actually, $f$ is what is called a modular form and these will be studied throughout this thesis. Modular forms and their generalisations arise naturally in many different areas of mathematics, not only number theory and arithmetic geometry but also mathematical physics. 

All of these studies lie in the theory of automorphic representations and these are at the heart of modern number theory. One of the driving forces behind this is the fascinating work of Langlands who associates $L$-functions to automorphic representations. There are now a great many mathematicians now working on what is called the Langlands' Program. 

We will see that modular forms and their generalisations have a Fourier expansion. This means to understand the properties of modular forms we are required to study the properties of their Fourier coefficients. In this thesis, we will investigate some of the algebraic and arithmetic properties that these Fourier coefficients have. Instead of tackling this problem directly, we will often transfer the problem to one involving automorphic representations. This link between the two different settings is discussed in the next section. 

\subsection{General notation}

We collect some notation which will be used regularly throughout the thesis. At the beginning of Chapters \ref{Chap2} and \ref{Chap3} we will collect notation which is specific to that chapter. The symbols $\Z$, $\Q$, $\R$, $\C$, $\Z_p$ and $\Qp$ all have the usual meaning. For a complex number $z$, we denote $e(z)=e^{2\pi i z}$. Let $N\geq1$, we define $\zeta_N=e^{2\pi i/N}$. Let $\H$ denote the upper half plane, that is, \[\H=\{z=x+iy\in\C: y>0\}.\]For any commutative ring, $R$, we define the groups $\GL_2(R)$ and $\SL_2(R)$ as \[\GL_2(R)=\left\{g\in M_2(R):\det g\in R^\times\right\}\] and \[\SL_2(R)=\{g\in\GL_2(R):\det g=1\}\] respectively. We define the following matrices as \[a(y)=\begin{pmatrix} y & \\ & 1 \end{pmatrix}, \ n(x)=\begin{pmatrix} 1 & x \\ & 1 \end{pmatrix}, \ z(t)=\begin{pmatrix} t & \\ & t \end{pmatrix}, \ \text{with} \ y,t\in R^\times, x\in R.\] We define the subgroup $B(R)$ of $\GL_2(R)$ by \[B(R)=\left\{\begin{pmatrix} a & b \\ & d \end{pmatrix}: a,d\in R^\times, b\in R\right\}.\] For a character or representation we denote $c(\cdot)$ to be the conductor of the character or the representations.  

\section{Classical and adelic setting}\label{ctoa}

In this section, we give an overview and discuss the classical and adelic settings of modular forms. We start with the classical setting and then move to the adelic approach, whereby we see some of the benefits of these two different approaches. Finally, we see how these two different settings interplay. This interaction is pivotal to the work carried out in this thesis. This is the real motivation for the work carried out during this thesis. In this section, we only consider the setting of modular forms over $\Q$, in Chapter \ref{Chap3} we see how this works more generally for any totally real number field. 

In this section, we will discuss the general motivation for my work during my PhD. This will include viewing automorphic forms from a classical and adelic viewpoint and explaining the benefits of viewing these objects from different viewpoints.

\subsection{The classical setting}\label{Class}

We start by overviewing the classical setting and defining classical holomorphic modular forms. 

Consider $g=\smatrix\in\GL_2^+(\R)$, where $\GL_2^+(\R)$ denotes the subgroup of $\GL_2(\R)$ whose elements have positive determinant. Then $\GL_2^+(\R)$ acts on $\H$ via \[g\cdot z=\smatrix\cdot z=\frac{az+b}{cz+d}.\] Let $f:\H\rightarrow\C, \ k$ an integer and $g\in\GL_2^+(\R)$ then we can define a function $f|_kg:\H\rightarrow\C$ as \[f|_kg(z):=\frac{(\det g)^{k/2}}{(cz+d)^k}f(g\cdot z), \ g=\smatrix\in\GL_2^+(\R).\] Let $\Gamma(N)$ be the subgroup of $\SL_2(\Z)$ defined by  \[\Gamma(N)=\left\{\gamma\in\sl2Z:\gamma\equiv
(\begin{smallmatrix} 
1 & 0 \\
0 & 1
\end{smallmatrix}) \pmod{N}\right\}.\] This is called the principal congruence subgroup of $\SL_2(\Z)$. Any subgroup $\Gamma$ of $\SL_2(\Z)$ such that $\Gamma(N)\leq\Gamma\leq\SL_2(\Z)$ is called a congruence subgroup. Note that taking $N=1$ gives $\Gamma(1)=\SL_2(\Z)$. For our purposes, we will focus mainly on $\Gamma=\Gamma_0(N), \Gamma_1(N)$. These subgroups are defined as \[\Gamma_0(N)=\left\{\begin{pmatrix}a & b \\ c & d \end{pmatrix} \in \SL_2(\Z): c\equiv0 \pmod{N}\right\}\] and \[\Gamma_1(N)=\left\{\begin{pmatrix} a & b \\ c & d \end{pmatrix}\in \SL_2(\Z): \begin{pmatrix} a & b \\ c & d \end{pmatrix} \equiv \begin{pmatrix} 1 & \ast \\ 0 & 1 \end{pmatrix} \pmod{N}\right\}.\] 

We are now able to define a modular form for $\Gamma$.

\begin{Defn}\label{modform}
A function $f:\H\rightarrow\C$ is a modular form of weight $k\in\Z$ with respect to $\Gamma$ if
\begin{enumerate}
\item[(i)] $f$ is holomorphic on $\H$,
\item[(ii)] $f|_k\gamma=f,$ for every $\gamma\in\Gamma$,
\item[(iii)] $f|_kg$ is holomorphic at $\infty$, for every $g\in\sl2Z$.
\end{enumerate}
\end{Defn}

Note that each $\Gamma$ contains the element $(\begin{smallmatrix} 1 & w \\ & 1 \end{smallmatrix})$ for some minimal $w\in\N$. Therefore, from Definition \ref{modform} (ii), we have that $f(z+w)=f(z)$. Using this as well as the other properties from Definition \ref{modform}, we have that $f$ has a Fourier expansion of the form \[f(z)=\sum_{n\geq0}a_f(n)e^{2\pi i nz/w},\] where $a_f(n)$ are the Fourier coefficients of $f$ and $w$ is the smallest integer such that $(\begin{smallmatrix} 1 & w \\0 & 1\end{smallmatrix})\in\Gamma$.  In particular, for $\Gamma=\Gamma_0(N)$ we have $w=1$. A cuspform is a modular form such that the constant term in the Fourier expansion of $f|_kg$ is zero for every $g\in\SL_2(\Z)$. The space of all modular forms of weight $k$ on $\Gamma$ is denoted by $M_k(\Gamma)$, and the space of all cuspforms of weight $k$ on $\Gamma$ is denoted by $S_k(\Gamma)$.

\begin{Remark}
Let $\chi:(\Z/N\Z)^\times\rightarrow\C^\times$ be a Dirichlet character. In which case we say $f$ is a modular form with character $\chi$ if $f$ satisfies (i)-(iii) and in addition for every $\gamma=\smatrix\in\Gamma_0(N)$, we have that $f|_k\gamma=\chi(d)f$. The space of all such modular forms is denoted \[M_k(N,\chi)=\left\{f\in M_k(\Gamma_1(N)):f|_k\gamma=\chi(d)f, \  \forall\gamma\in\Gamma_0(N)\right\}.\] Note that, for $\chi=\1$ we have that $M_k(N,\1)=M_k(\Gamma_0(N))$ and also \[M_k(\Gamma_1(N))=\bigoplus_\chi M_k(N,\chi).\] If $f$ also satisfies the condition of a cusp form then we say $f$ is a cusp form with character $\chi$. 
\end{Remark}

\begin{Remark}
One can show that \[\eta(z)=e^{2\pi i nz}\prod_{n\geq1}\left(1-e^{2\pi i nz}\right)^{24},\] is an element of $S_{12}(\SL_2(\Z))$.  The LMFDB \cite{LMFDB} contains a wide variety of examples of modular forms for various weights and levels. For our purposes, examples are not the most instructive as we require results for all modular forms of certain weights and levels. 
\end{Remark}

Let $\sigma\in\SL_2(\Z)$ be such that $\c=\sigma\infty$ where $\c$ is a cusp of $\Gamma_0(N)$. If $f$ is a modular form with respect to $\Gamma_0(N)$, then $f|_k\sigma$ is a modular form for the congruence subgroup $\sigma^{-1}\Gamma_0(N)\sigma$. We have that $f|_k\sigma$ has a Fourier expansion of the form \[f|_k\sigma(z)=\sum_{n\geq0}\af e^{2\pi i nz/w(\sigma)},\] where $\af$ are the Fourier coefficients and $w(\sigma)$ is the width of the cusp. The width of the cusp is defined to be the integer $N/(L^2,N)$, where $L$ is the denominator of the cusp. Specifically, we have $L|N$ and $\c=a/L$. 

If $M|N$, then we have $\Gamma_0(N)\subset\Gamma_0(M)$ and so $S_k(\Gamma_0(M))\subset S_k(\Gamma_0(N))$. Therefore, every cuspform of level $M$ is also one of level $N$. As such, it makes sense to distinguish the forms coming from lower levels and those strictly of level $N$. We do this as follows. As $\H$ is a hyperbolic space we can equip $\H$ with the hyperbolic measure $dxdy/y^2$. Let $f,g\in S_k(\Gamma)$. Then we define the Petersson inner product \[\langle\cdot,\cdot\rangle_\Gamma:S_k(\Gamma)\times S_k(\Gamma)\rightarrow\C\] as \[\langle f,g\rangle_\Gamma=\frac{1}{\text{Vol}(\Gamma)}\int_{\Gamma\bs\H}f(z)\overline{g(z)}y^k \ \frac{dxdy}{y^2},\] where \[\text{Vol}(\Gamma)=\int_{\Gamma\bs\H} 1 \ \frac{dxdy}{y^2}.\] This is well defined since for $\gamma\in\Gamma$ we have, 
\begin{align*}
f(\gamma\cdot z)\overline{g(\gamma\cdot z)}\Ima(\gamma\cdot z)&=f|_k\gamma(z)j(g,z)^k\overline{g|_k\gamma}\overline{j(\gamma,k)}^k\frac{\Ima(z)}{|j(\gamma,z)|^{2k}}\\
&=f|_k\gamma(z)\overline{g|_k\gamma(z)} \Ima(z)^k\\
&=f(z)\overline{g(z)}\Ima(z)^k,
\end{align*}
as $f,g$ are elements of $S_k(\Gamma)$. Equipped with this inner product $S_k(\Gamma)$ becomes a finite dimensional Hilbert space, with the decomposition \[S_k(\Gamma)=S_k(\Gamma)^{\text{old}}\oplus S_k(\Gamma)^{\text{new}}.\] Now we briefly define Hecke operators, more details can be found in \cite[Section 2.7-2.8]{TM} for example. For each prime $p$ consider the double coset \[\Gamma_0(N)\begin{pmatrix}1 & \\ & p \end{pmatrix}\Gamma_0(N)=\bigcup_j\Gamma_0(N)\gamma_j.\] This is well defined as this union is finite see \cite[Chapter 5]{1st}. The $p$-th Hecke operator $T_p$ is the operator defined on $S_k(\Gamma_0(N))$ through the natural action of this double coset. Precisely, we have \[T_pf=p^{\frac{k}{2}-1}\sum_j f|_k\gamma_j.\] We have that $T_p$ is hermitian for $S_k(\Gamma_0(N))$. Let $f\in S_k(\Gamma_0(N))$ have a Fourier expansion of the form \[f(z)=\sum_{n\geq1}a_f(n)e^{2\pi i nz},\] then \[T_pf(z)=\sum_{n\geq1}a_{T_pf}(n)e^{2\pi inz},\] where \[a_{T_pf}(n)=\sum_{d|(n,p)}d^{k-1}a_f\left(\frac{np}{d^2}\right)=a_f(np)+p^{k-1}a_f\left(\frac{n}{p}\right).\] Using this definition we can generalise $T_p$ to $T_m$ where $m\geq1$ see \cite[Section 5.3]{1st}. We say that $f$ is an eigenform for $T_m$ if we have $T_mf=\lambda_mf$, with $(m, N)=1$. Then a newform is a cuspform $f\in S_k(\Gamma_0(N))^{\text{new}}$ that is also an eigenform of all the Hecke operators normalised so that $a_f(1)=1$. The newforms are a standard basis for the vector space $S_k(\Gamma_0(N))^{\text{new}}$. In our work, we focus on newforms, and this gives one of the reasons why. We will see later that there is also another very useful result for why we restrict our attention to newforms. 

\subsection{Automorphic forms and automorphic representations}

In number theory, we are interested in the ring $\Z$. We can embed $\Z$ as a discrete subring in the ring $\R$. We have that the quotient $\R/\Z$ is compact. We also have that $\R$ is a locally compact topological group for which an integration theory exists. This means if we want to study $\Z$ we can do so by considering periodic functions. We would like to do something analogous for $\Q$. So, we need to find a locally compact topological group that contains $\Q$ as a discrete subring and the quotient is compact. We now construct such a ring and discuss some of its properties. Let $p$ be a prime and $\Qp$ the $p$-adic numbers. Then the ring of adeles of $\Q$ is defined as \[\A=\sideset{}{'}\prod_{p\leq\infty}\Qp=\big\{a=(a_p)_{p\leq\infty}\in\prod_{p\leq\infty}\Qp:a_p\in\Z_p \ \text{for all but finitely many} \ p\big\},\] where $\Q_\infty=\R$. We can see that $\A$ contains information about all the completions of $\Q$ and so we have the archimedean and non-archimedean knowledge of $\Q$. We will see later that this will be incredibly useful for our purposes. We will also be interested in studying the subset $\A_{\text{fin}}\subset\A$, these are all the elements $a=(a_v)_v$ where at infinite places we have $a_v=1$. The natural topology to equip $\A$ with is the restricted direct product topology, defined by, the one which is generated by the sets \[\prod_{p\leq\infty}U_p, \ U_p \ \text{open in} \ \Qp, \ U_p=\Z_p \ \text{for almost all} \ p.\] Let $x\in\A$, then there exists $\gamma\in\Q$ such that \[x-\gamma\in\R\times\prod_{p<\infty}\Z_p.\] By adding an appropriate integer, if needed, we can assume the archimedean component lies in $[0,1)$. Assuming this, then $\gamma$ is uniquely determined. In this way, we obtain a topological group isomorphism \[\A/\Q\cong\R/\Z\times\prod_{p<\infty}\Z_p.\] To make full use of this new setting we also want to understand what happens in the multiplicative setting. The group of ideles is defined as \[\A^\times=\sideset{}{'}\prod_{p\leq\infty}\Q_p^\times=\big\{a=(a_p)_{p\leq\infty}\in\prod_{p\leq\infty}\Q_p^\times:a_p\in\Z_p^\times \ \text{for all but finitely many} \ p\big\}.\] The topology on $\A^\times$ is the one generated by the sets \[\prod_{p\leq\infty}U_p, \ U_p \ \text{open in} \ \Q_p^\times, \ U_p=\Z_p^\times \ \text{for almost all} \ p.\] With this topology $\A^\times$ becomes a locally compact topological group. 

We now state a key result that we will make great use of throughout the rest of this thesis. Before we do this we define \[\GL_2(\A)=\Big\{g=(g_p)_p\in\prod_{p\leq\infty}\GL_2(\Qp):g_p\in\GL_2(\Z_p) \ \text{for almost all} \ p\Big\}.\] This will be the group which we will define automorphic forms and automorphic representations for.  

\begin{Thm}[Strong approximation for $\Q$]\label{SA} For $n=2$, we have
\begin{enumerate}
\item[(i)] $\SL_2(\R)\SL_2(\Q)$ is dense in $\SL_2(\A)$.
\item[(ii)] Let $K$ be an open compact subgroup of $\GL_2(\A_{\mathrm{fin}})$. Assume that the image of $K$ in $\A_{\mathrm{fin}}^\times$ under the determinant map is $\prod_{p<\infty}\Z_p^\times$. Then the cardinality of \[\GL_2(\Q)\GL_2^+(\R)\bs\GL_2(\A)/K\] is one. 
\end{enumerate}
\end{Thm}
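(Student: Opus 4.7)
For part (i), my plan is first to reduce the density claim to the statement that $\SL_2(\Q)$ is dense in $\SL_2(\A_{\mathrm{fin}})$. A basic open neighbourhood in $\SL_2(\A)$ has the form $V_\infty \times \prod_{p \in S} V_p \times \prod_{p \notin S} \SL_2(\Z_p)$ for some finite set $S$ of primes, and the archimedean factor can always be adjusted by left multiplication from $\SL_2(\R)$. Thus it suffices to produce $\gamma \in \SL_2(\Q)$ (diagonally embedded) close to prescribed targets at the primes in $S$ and lying in $\SL_2(\Z_p)$ for all remaining primes. To do this I would combine the weak approximation fact that $\Q$ is dense in $\A_{\mathrm{fin}}$ with the generation of $\SL_2(\Q_p)$ by upper- and lower-unipotent matrices $n(x)$ and $\bigl(\begin{smallmatrix} 1 & 0 \\ y & 1 \end{smallmatrix}\bigr)$, via the Bruhat decomposition $\SL_2 = B \cup BwB$. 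Using local normal forms for the components, rational choices of the entries then allow simultaneous approximation at the finitely many places in $S$ while preserving integrality at all others.

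For part (ii), the strategy is to push the problem through the determinant map to a triviality in idele class groups. Indeed $\det$ induces a map
\[
\det \colon \GL_2(\Q)\GL_2^+(\R) \bs \GL_2(\A) / K \longrightarrow \Q^\times \R_+^\times \bs \A^\times / \det(K),
\]
and under the hypothesis $\det(K) = \prod_{p<\infty}\Z_p^\times$ the target is trivial: given $x \in \A^\times$, writing its $p$-component as $p^{n_p} u_p$ with $u_p \in \Z_p^\times$ and setting $q := \prod_p p^{n_p} \in \Q^\times$, the element $q^{-1} x$ has every finite component in $\Z_p^\times$, and a further multiplication by $\pm 1 \in \Q^\times$ places the archimedean component in $\R_+^\times$. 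This is essentially the narrow-class-number-one property of $\Q$.

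It then remains to show the induced determinant map is injective. Given $g_1, g_2 \in \GL_2(\A)$ with matching image, I would pick $q \in \Q^\times$, $r \in \R_+^\times$, $k \in K$ with $\det g_2 = qr(\det g_1)(\det k)$ and set
\[
g_2' := \begin{pmatrix} q & 0 \\ 0 & 1 \end{pmatrix} \begin{pmatrix} r & 0 \\ 0 & 1 \end{pmatrix} g_1 k,
\]
where the first factor is diagonally embedded in $\GL_2(\Q)$ and the second sits only at infinity in $\GL_2^+(\R)$, so that $h := g_2 (g_2')^{-1} \in \SL_2(\A)$. I would now upgrade (i) to the equality
\[
\SL_2(\A) = \SL_2(\Q) \cdot \SL_2(\R) \cdot (K \cap \SL_2(\A_{\mathrm{fin}})),
\]
which follows because $U := \SL_2(\R) \cdot (K \cap \SL_2(\A_{\mathrm{fin}}))$ is an open subgroup of $\SL_2(\A)$, hence also closed, so $\SL_2(\Q) U$ is clopen and contains the dense subset $\SL_2(\R) \SL_2(\Q)$, and therefore equals all of $\SL_2(\A)$. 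Decomposing $h$ in this way and reassembling, using that conjugation of the $\GL_2^+(\R)$-factor by the diagonally embedded rational factor preserves positivity of the archimedean determinant, produces $g_2 \in \GL_2(\Q)\GL_2^+(\R) g_1 K$. The principal obstacle in the plan is part (i): the reduction to density in $\SL_2(\A_{\mathrm{fin}})$ is formal, but this density is the heart of the strong approximation theorem. Once (i) is in hand, part (ii) is essentially bookkeeping combined with the class-number-one property of $\Q$.
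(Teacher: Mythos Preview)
The paper does not actually prove this theorem: it is stated as background and accompanied only by a remark pointing to \cite[Theorem 3.3.1]{Bump} for the general $\GL_n$/number field version. So there is no ``paper's own proof'' to compare against.

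Your proposal is a correct outline of the standard argument. The reduction in (ii) via the determinant to the idele class group, together with the upgrade of (i) to the equality $\SL_2(\A)=\SL_2(\Q)\cdot\SL_2(\R)\cdot(K\cap\SL_2(\A_{\mathrm{fin}}))$ using that $\SL_2(\Q)U$ is clopen for any open subgroup $U$, is exactly how one deduces the single-coset statement from strong approximation for $\SL_2$; this is the argument in Bump. Your bookkeeping with $g_2'$ and $h$ is fine.

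For (i) your sketch is honest about where the content lies. The one point worth flagging is that ``Bruhat plus weak approximation for $\Q$'' is not by itself a proof: when you approximate the unipotent parameters at the primes in $S$ by rationals, you must simultaneously arrange that the resulting $\gamma\in\SL_2(\Q)$ lands in $\SL_2(\Z_p)$ for all $p\notin S$, and the naive choice of rational entries can introduce denominators at unintended primes. The standard fix is an inductive or Iwasawa-type argument (as in Bump or Platonov--Rapinchuk) that controls all places at once; you have correctly identified this as the principal obstacle, but the proposal as written does not yet resolve it.
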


\begin{Remark}
This actually, holds more generally for $\GL_n$ over number fields where the appropriate changes are made. Specifically, for (ii) we have that the cardinality of the set is the size of the class group, see \cite[Theorem 3.3.1]{Bump} for a precise statement.
\end{Remark}

Using the above theorem, we have that \[\A^\times=\Q^\times \times \R_{>0} \times \prod_{p<\infty} \Z_p^\times.\] We are now able to define and state some of the properties of automorphic forms for $\GL_2(\A)$.  Informally, these are functions that will have nice properties which will allow us to relate them to classical modular forms.  Before, we can state the formal definition of an automorphic form we need to define some general properties which we want these functions to have.  Let $\phi:\GL_2(\A)\rightarrow\C$ be a continuous function and for simplicity suppose that \[\phi\left(g\begin{pmatrix} z & \\ & z \end{pmatrix}\right)=\phi(g), \ \text{for every} \ g\in\GL_2(\A) \ \text{and} \ z\in\A^\times.\] This relates to those invariant under the centre of $\GL_2(\A)$. In \cite[Chapter 3]{Bump} they deal with the general case. We say that $\phi:\GL_2(\A)\rightarrow\C$ is smooth, if the function 
\begin{align*}
\GL_2(\R)\times\GL_2(\A_{\text{fin}})&\rightarrow \C\\
(g,h)&\mapsto \phi(gh),
\end{align*}
is $C^\infty$ in $g$ for fixed $h$ and is locally constant in $h$ for a fixed $g$.  

Next, consider functions $\phi:\GL_2(\R)\rightarrow\C$. Let $g=\smatrix\in\GL_2(\R)$ then define \[|g|:=\sqrt{a^2+b^2+c^2+d^2}.\] We want to find a condition such that $\phi((\begin{smallmatrix} 1 & \\ & y \end{smallmatrix}))$ grows polynomially as $y\rightarrow0$. We also want a condition which is invariant under changing $g$ and $g^{-1}$. Due to this, define $||g||=|g|+|g^{-1}|$. If for appropriate $C$ and $n$ we have 
\begin{equation}\label{grow}
|\phi(g)|\leq C||g||^n, \ \text{for every} \ g\in\GL_2(\R),
\end{equation}
then we say $\phi$ is slowly increasing. One can show using the sub-multiplicity of the Euclidean norm of $\GL_2(\R)$ that if $h_1,h_2\in\GL_2(\R)$ and $\phi$ is slowly increasing in the sense of \eqref{grow} then so is the function $g\mapsto \phi(h_1gh_2)$. We can now define what it means for an adelic function to be slowly increasing. Let $\phi:\GL_2(\A)\rightarrow\C$ be a continuous function. We say $\phi$ is slowly increasing, if for each $h\in\GL_2(\A_\text{fin})$ the function on $\GL_2(\R)$ given by $g\mapsto\phi(gh)$ is slowly increasing as in \eqref{grow}.

We are now able to state the definition of automorphic forms for $\GL_2(\A)$. 

\begin{Defn}\label{auto}
Let $\phi:\GL_2(\A)\rightarrow\C$ be a function with the property that \[\phi(g(\begin{smallmatrix} z & \\ & z \end{smallmatrix}))=\phi(g), \]for every $g\in\GL_2(\A)$ and $z\in\A^\times$. Then $\phi$ is an automorphic form if it satisfies the following properties,
\begin{enumerate}
\item[(i)] $\phi(\gamma g)=\phi(g)$, for every $g\in\GL_2(\A)$ and $\gamma\in\GL_2(\Q)$,
\item[(ii)] $\phi$ is smooth,
\item[(iii)] $\phi$ is right-invariant under $\GL_2(\Z_p)$ for almost all primes $p$,
\item[(iv)] $\phi$ is $K_\infty=\text{SO}(2)$ finite,
\item[(v)] $\phi$ is $\mathcal{Z}$ finite, where $\mathcal{Z}$ denotes the centre of the universal enveloping algebra of $\GL_2^+(\R)$, see \cite[Chapter 2]{DG2},
\item[(vi)] $\phi$ is slowly increasing.
\end{enumerate}
\end{Defn}

If in addition to (i)-(vi) $\phi$ also satisfies the condition \[\int_{\Q\bs\A}\phi\left(\begin{pmatrix}1 & x \\ & 1 \end{pmatrix}g\right)=0, \ \text{for almost every} \ g,\] then we say $\phi$ is cuspidal. 

We can now define the notation of automorphic representations. As the name suggests these will be related to the automorphic forms defined above.  What we will do is construct a representation space whose elements are automorphic forms. 

Let $G$ be a group and $B$ a subgroup of $G$. Let $\chi:B\rightarrow\C^\times$. Given such a $\chi$ the representation of $G$ induced by $\chi$ has a representation space \[V=\left\{\ph:G\rightarrow\C^\times:\ph(bg)=\ph(g), \ \text{for every} \ g\in G, \ b\in B\right\}.\] This means $V$ consists of functions on $G$ with left transformation property. Therefore, the group $G$ acts on this space by right translation. So, we can define a representation $\ro$ of $G$ on $V$ by \[\ro(g)\ph(h)=\ph(hg), \ \text{for every} \ \ph\in V, \ g,h\in G.\] For us, we will have $G=\GL_2(\A)$ and $B=B(\A)$. 

Let $\ph:\GL_2(\A)\rightarrow\C$ which has the transformation property 
\begin{equation}\label{phtran}
\ph\left(\begin{pmatrix} a & b \\ & d \end{pmatrix} g \right)=\left|\frac{a}{d}\right|^{\frac{s+1}{2}}\ph(g), \ g\in\GL_2(\A), \ a,d \in \A^\times , \ b\in\A,
\end{equation}
with $s\in\C$. The space of functions $\ph$ which satisfy \eqref{phtran} is denoted $V(s)$. We know that the global absolute value is invariant under rationals. So \eqref{phtran} implies \[\ph(bg)=\ph(g), \ \text{for every} \ b\in B(\Q) \ \text{and} \ g\in\GL_2(\A).\] Now we can define a new function $\phi:\GL_2(\A)\rightarrow\C$ defined by 
\begin{equation}\label{phi}
\phi(g)=\sum_{\gamma\in B(\Q)\bs\GL_2(\Q)} \ph(\gamma g).
\end{equation}
One can show for Re$(s)$ large enough \eqref{phi} is convergent for any $\ph$ in the space of $V(s)$, see \cite[Chapter 2]{JL70}. Let $W(s)$ be the space of all functions $\phi$ where $\ph$ runs through $V(s)$.  By construction, we have a surjective map
\begin{align*}
V(s)&\rightarrow W(s)\\
\ph&\mapsto \phi.
\end{align*}
If $V(s)$ is an irreducible representation of $\GL_2(\A)$ then the map is also injective. We will always assume that our representations are indeed irreducible. From \eqref{phi} we see that $\phi(\gamma g)=\phi(g)$ for $g\in\GL_2(\A)$ and $\gamma\in\GL_2(\Q)$. This is exactly (i) of Definition \ref{auto}. One can show that the functions $\phi$ in $W(s)$ satisfy all the properties of Definition \ref{auto} and so are automorphic forms. 

\begin{Defn}
\begin{enumerate}
\item[(i)] Let $\pi$ be an irreducible representation of $\GL_2(\A)$. We say that $\pi$ is an automorphic representation if there exists a representation $\pi'$ consisting of automorphic forms, invariant under right translation, such that $\pi\cong\pi'$ as $\GL_2(\A)$ representations. 
\item[(ii)] An automorphic representation, $\pi$, is called a cuspidal automorphic representation if every vector $\phi\in\pi$ is a cuspform.
\end{enumerate}
\end{Defn}

We have that if $\pi$ is cuspidal then it is not induced from $B(\A)$. Therefore, cuspidal automorphic representations are the ones that are of most interest.  Note, that we have not shown the existence of such representations. 

Here, we have given an overview of this setting for a more detailed study of this setting see \cite[Chapter 5]{DGJH}. In the next section, we will see how this adelic representation theory can be viewed in terms of the classical theory described in Section \ref{Class}.

\subsection{The dictionary}\label{gobetween} 

At a first look, these two different settings which we have described do not look so compatible with each other. However, we will see that this is not the case and the relation between these two settings is very explicit. 

We will first see how we can view a classical cuspform as a function on $\GL_2(\A)$ then how one can show a relation between them and representations of $\GL_2(\A)$.  

For ease, we explain this relation in the case when $\chi$ a Dirichlet character is trivial, for $\chi$ non-trivial see \cite[Section 3]{gelbart}.  Let $f\in S_k(\Gamma_0(N))$ be fixed then we can define a function $\phi_f$ on $\GL_2(\A)$ by 
\begin{equation}\label{ftophi}
\phi_f(g)=\det(g_\infty)^{k/2}(ci+d)^{-k}f(g_\infty\cdot i),
\end{equation}
where $g=\gamma g_\infty \kappa_0$, with $\gamma\in\Gamma_0(N), \ g_\infty\in\GL_2^+(\R)$ and $\kappa_0\in K_0(N)=\prod_{p<\infty}K_p(N)$.  The function $\phi_f$ is well defined as we have \[\Gamma_0(N)=\GL_2^+(\R)K_0(N)\cap \GL_2(\Q).\] As $f$ is an element of $S_k(\Gamma_0(N))$, we have that $\phi_f(\gamma g)=\phi_f(g)$, for every $\gamma\in\Gamma_0(N)$.  In general, using strong approximation we have the following.

\begin{Thm}[{\cite[Proposition 3.1]{gelbart}}]\label{fphi}
The map $f(z) \mapsto \phi_f(g)$ is an isomorphism between $S_k(\Gamma_0(N))$ and the space of functions $\phi$ on $\GL_2(\A)$ satisfying the following conditions, 
\begin{enumerate} 
\item[(i)] $\phi(\gamma g)=\phi(g)$, for every $\gamma\in\GL_2(\Q)$,
\item[(ii)] $\phi(g\kappa_0)=\phi(g)$, for every $\kappa_0\in K_0(N)$,
\item[(iii)] $\phi(gr(\theta))=e^{-ik\theta}\phi(g)$, if $r(\theta)=(\begin{smallmatrix} \cos\theta & -\sin\theta \\ \sin\theta & \cos\theta\end{smallmatrix})$,
\item[(iv)] the function $\phi$, viewed as a function of $\GL_2^+(\R)$ alone, satisfies the differential equation \[\Delta\phi=-\frac{k}{2}\left(\frac{k}{2}-1\right)\phi,\]
\item[(v)] $\phi(zg)=\phi(g)$, for every $z\in Z(\A)$,
\item[(vi)] $\phi$ is slowly increasing,
\item[(vii)] $\phi$ is cuspidal, that is, \[\int_{\Q\bs\A}\phi\left(\begin{pmatrix} 1 & x \\ & 1 \end{pmatrix} g\right)=0,\] for almost every $g$.
\end{enumerate}
\end{Thm}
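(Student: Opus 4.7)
The plan is to exploit strong approximation (Theorem \ref{SA}) together with the decomposition $\GL_2(\A) = \GL_2(\Q) \cdot \GL_2^+(\R) \cdot K_0(N)$ and the observation that the ambiguity in this decomposition is precisely $\Gamma_0(N) = \GL_2(\Q) \cap (\GL_2^+(\R) \cdot K_0(N))$. First I would check that \eqref{ftophi} defines $\phi_f$ unambiguously: the only freedom in writing $g = \gamma g_\infty \kappa_0$ is to replace $g_\infty$ by $\gamma_0 g_\infty$ (and correspondingly alter $\gamma$ and $\kappa_0$) for some $\gamma_0 \in \Gamma_0(N)$, and invariance of the right-hand side of \eqref{ftophi} under such a change is precisely the transformation law $f|_k\gamma_0 = f$.

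In the forward direction I would verify (i)--(vii) for $\phi_f$. Properties (i), (ii), (v) are immediate from the defining decomposition. Property (iii) reduces to the identity $(\sin\theta \cdot i + \cos\theta)^{-k} = e^{-ik\theta}$ after noting that $r(\theta) \cdot i = i$. Property (vii) follows from strong approximation together with the topological isomorphism $\Q \bs \A \cong [0,1) \times \prod_{p<\infty} \Z_p$: the adelic cuspidality integral reduces to the integral of the constant Fourier coefficient of $f|_k g$ over a fundamental domain, which vanishes precisely because $f$ is a cuspform. Property (vi) follows from the exponential decay of $f$ at each cusp combined with polynomial growth of the automorphy factor. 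For (iv), the Casimir operator $\Delta$ of $\GL_2^+(\R)$ acts by $-\frac{k}{2}(\frac{k}{2}-1)$ on the weight-$(-k)$ vector $\phi_f$ arising from a holomorphic $f$; this is a standard $\mathfrak{sl}_2(\C)$ computation.

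For the inverse direction I would take $\phi$ satisfying (i)--(vii) and set
\[f_\phi(z) = y^{-k/2} \phi(n(x)a(y)), \qquad z = x + iy \in \H,\]
where $n(x)a(y) \in \GL_2^+(\R)$ is embedded diagonally in $\GL_2(\A)$. Properties (i) and (ii) together imply $f_\phi|_k \gamma = f_\phi$ for $\gamma \in \Gamma_0(N)$, once (iii) is used to absorb the rotation factor appearing in the Iwasawa decomposition of $\gamma \cdot n(x) a(y)$. Condition (v) gives consistency with the centre, condition (vi) translates to moderate growth of $f_\phi$ at the cusps, and condition (vii) yields the vanishing of the constant Fourier coefficient of every $f_\phi|_k g$, which is exactly cuspidality.

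The main obstacle --- and the only genuinely non-formal step --- is deducing holomorphy of $f_\phi$ from (iii) and (iv) together. The $\text{SO}(2)$-equivariance in (iii) identifies $\phi$ as a weight-$(-k)$ vector for $K_\infty$, and combined with the Casimir eigenvalue $-\frac{k}{2}(\frac{k}{2}-1)$ in (iv), a standard $\mathfrak{sl}_2(\C)$ computation forces $\phi$ to be annihilated by the anti-holomorphic lowering element. Transferring this annihilation back to $\H$ through the Iwasawa parametrisation gives exactly the Cauchy--Riemann equation $\partial_{\bar z} f_\phi = 0$. Once holomorphy is in hand, checking that $f \mapsto \phi_f$ and $\phi \mapsto f_\phi$ are mutually inverse is a direct computation from the defining formulas.
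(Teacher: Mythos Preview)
The paper does not actually prove this theorem: it is stated with a citation to \cite[Proposition 3.1]{gelbart} and then used as input for the subsequent discussion. So there is no in-paper argument to compare against. Your sketch is the standard proof (essentially Gelbart's): strong approximation gives the decomposition $\GL_2(\A)=\GL_2(\Q)\,\GL_2^+(\R)\,K_0(N)$ with overlap $\Gamma_0(N)$, which simultaneously handles well-definedness of $\phi_f$ and the verification of (i), (ii), (v); the rotation identity gives (iii); growth and cuspidality transfer as you describe; and the only substantive point is that (iii) together with the Casimir eigenvalue in (iv) forces annihilation by the Maass lowering operator, which is exactly $\partial_{\bar z}f_\phi=0$. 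One small terminological quibble: with the convention $\phi(gr(\theta))=e^{-ik\theta}\phi(g)$ used here, most references would call $\phi$ a weight-$k$ (not weight-$(-k)$) vector, but this does not affect the argument.
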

This shows that classical cuspforms can be viewed as automorphic forms on $\GL_2(\A)$. As we saw in Section \ref{Class} the space of cuspforms is made up of oldforms and newforms. Therefore in this new viewpoint, it is reasonable to ask about what happens if $f$ is a newform or not. Can we say more about the relation when $f$ is a newform? To answer this question we need to look at automorphic representations.  

As we saw in the previous section there exist representations $\pi$ of $\GL_2(\A)$ for which automorphic forms are elements of that representation space. Therefore, we should be able to rewrite the idea of the previous theorem into one involving automorphic representations.  To do this consider the following space, \[L^2(\GL_2(\Q)\bs\GL_2(\A))\] and let $\ro$ be the natural representation of $\GL_2(\A)$ on the space \[L^2(\GL_2(\Q)\bs\GL_2(\A)).\] Then denote the restriction of $\ro$ to \[L^2_0(\GL_2(\Q)\bs\GL_2(\A))\] by $\ro_0$.  In Theorem \ref{fphi} we showed that classical cuspforms are specific elements of the space \[L^2_0(\GL_2(\Q)\bs\GL_2(\A)).\] Therefore, this is the space and the representation $\ro$ which are of interest. The representation $\ro$ is not quite the representation we require, because for a representation to be automorphic it must be irreducible, which $\ro$ might not be. However, we have that $\ro_0$ can be broken down into irreducible parts and these will be the ones which we will study. Precisely, $\ro_0$ is the discrete direct sum of irreducible unitary representations each occurring with finite multiplicity. We can show a little more if $\pi$ is one of these irreducible components then $\pi$ has multiplicity one \cite[Theorem 5.7]{gelbart}.

We are now able to state one of the key results, which underpins this area of study. 

\begin{Thm}[{\cite[Theorem 5.19]{gelbart}}]\label{ftopi}
\begin{enumerate}
\item[(i)] Suppose $f\in S_k(\Gamma_0(N))$ is an eigenfunction of $T_p$ for every $p\nmid N$. Define $\pi_f$ to be the representation of $\GL_2(\A)$ determined by the translates of $\phi_f(g)$. Then $\pi_f$ is irreducible. 
\item[(ii)] Conversely, suppose $\pi$ is an irreducible constituent of $\ro_0$. Define $[f_\pi]$ to be the equivalence classes of the functions $f_\pi$ in $S_k(\Gamma_0(N))$ which share the same eigenvalues for every $T_p$ with $p\nmid N$, the conductor of $\pi$ being $N$ and which are such that $\phi_{f_\pi}(g)$ is contained in the space of $\pi$. Then the correspondence between $[f_\pi]$ and $\pi_f$, that is, between equivalence classes of cuspforms in the above sense and irreducible constituents of $\ro_0$ is bijective.
\item[(iii)] The mapping $f\rightarrow\pi_f$ from functions in $S_k(\Gamma_0(N))$ to representations of $\GL_2(\A)$ is bijective when restricted to the subspace of newforms. It is not bijective when restricted to the space of oldforms. 
\end{enumerate}
\end{Thm}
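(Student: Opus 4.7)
The plan is to prove each part by passing from the classical cuspform $f$ to the adelic function $\phi_f$ of Theorem \ref{fphi} and then exploiting the local-global structure of representations of $\GL_2(\A)$. The crucial technical input is the tensor product theorem of Flath, which lets us write any irreducible admissible representation of $\GL_2(\A)$ as a restricted tensor product $\pi \cong \bigotimes'_v \pi_v$ of irreducible admissible representations of the local groups $\GL_2(\Q_v)$. In particular, irreducibility of $\pi$ reduces to irreducibility of each local component.

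For part (i), I would first verify that $\phi_f \in L^2_0(\GL_2(\Q)\bs\GL_2(\A))$ via Theorem \ref{fphi}, so the closure of its right translates sits inside the cuspidal spectrum and decomposes discretely into irreducible constituents by a known theorem in \cite{gelbart}. To show there is only one such constituent, I would use the Hecke eigenvalue hypothesis: at each finite place $p \nmid N$, the vector $\phi_f$ is $\GL_2(\Z_p)$-fixed and an eigenvector for the local spherical Hecke algebra, so by the Satake correspondence the local constituent $\pi_{f,p}$ is the unique irreducible unramified principal series with the prescribed Satake parameters. At the archimedean place, conditions (iii)--(iv) of Theorem \ref{fphi} pin down $\pi_{f,\infty}$ as the holomorphic discrete series of weight $k$. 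At the remaining finite set of primes $p \mid N$, the local components are also irreducible admissible representations, and by Flath these local pieces assemble uniquely. This forces $\pi_f$ to be irreducible.

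For part (ii), the key ingredient is the strong multiplicity one theorem: if two irreducible cuspidal automorphic representations of $\GL_2(\A)$ agree at almost all places, they are isomorphic; combined with the multiplicity one theorem stated just before Theorem \ref{ftopi}, each $\pi$ occurs in $\rho_0$ with multiplicity one. Given an irreducible constituent $\pi \subset \rho_0$ of conductor $N$, I would associate to each $f$ in the span of $\pi$ satisfying conditions (i)--(vii) of Theorem \ref{fphi} a classical cuspform, and observe that functions in the same $\pi$ share all Hecke eigenvalues at $p \nmid N$. Conversely, two cuspforms with identical eigenvalues for all $T_p$ with $p \nmid N$ produce representations agreeing at almost all places, hence isomorphic by strong multiplicity one; Theorem \ref{fphi} then embeds both into the same $\pi$. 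This establishes the bijection between equivalence classes $[f_\pi]$ and irreducible constituents.

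For part (iii), I would use the theory of local newvectors due to Casselman: for each local component $\pi_p$ there is a unique (up to scalar) vector fixed by the local congruence subgroup of exactly the level dictated by the conductor $c(\pi_p)$, and the global conductor is the product $N = \prod_p p^{c(\pi_p)}$. The classical newform $f \in S_k(\Gamma_0(N))^{\mathrm{new}}$ corresponds under \eqref{ftophi} to the global new vector $\phi_f = \bigotimes_v \phi_{f,v}$ with $\phi_{f,p}$ the local newvector at each $p \mid N$; this vector is unique up to scalar, giving a bijection between newforms and irreducible cuspidal $\pi$ with conductor $N$. For oldforms at level $N$ coming from a newform of strictly smaller level $M \mid N$, the corresponding local space at primes $p \mid N/M$ is higher-dimensional (spanned by several translates of the newvector), so multiple classical oldforms collapse to the same automorphic representation and bijectivity fails. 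The main obstacle I anticipate is the careful local analysis at ramified primes needed in parts (i) and (iii), in particular invoking and coordinating the Casselman newvector theory with the classical definition of newforms via the Petersson inner product decomposition $S_k(\Gamma) = S_k(\Gamma)^{\mathrm{old}} \oplus S_k(\Gamma)^{\mathrm{new}}$.
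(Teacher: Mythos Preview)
The paper does not supply its own proof of this theorem: it is stated with attribution to \cite[Theorem 5.19]{gelbart} and then immediately used, so there is no in-paper argument to compare your proposal against. Your outline is a reasonable sketch of the standard proof (Flath's tensor product theorem, Satake at unramified places, discrete series at infinity, strong multiplicity one, and Casselman's local newvector theory), and it is broadly consistent with how Gelbart proves the result in the cited reference; but as far as this thesis is concerned the statement is a black box imported from the literature.
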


The third part of this theorem explains why for the most part in the work that we carry out we restrict our attention to newforms since we know that given a newform there is a unique representation $\pi_f$ which it maps to.

If $f$ is a newform, then we have $\pi_f=\otimes_{p\leq\infty}\pi_p$, where $\pi_p$ are representations of $\GL_2(\Qp)$. This means that we can turn the global problem about $f$ or $\pi_f$ into one about understanding $\pi_p$ at every prime $p$. Therefore, we can work purely locally and study the representations of $p$-adic groups, which for $\GL_2$ is pretty well understood. 

We now see an example of how this explicit relation between $f$ and $\phi_f$ can be used in terms of Fourier coefficients. The characters $\psi:\Q\bs\A$ are of the form $\psi(\xi x)=\psi_\xi(x)$ where $\xi\in\Q$ is arbitrary. This means that $\phi$ has a Fourier expansion of the form \[\phi\left(\begin{pmatrix}1 & x \\ & 1 \end{pmatrix} g\right)=\sum_{\xi\in\Q}\phi_\xi(g)\psi(\xi x),\] where $\phi_\xi(g)$ are the Fourier coefficients. Now, suppose that $f\in S_k(\SL_2(\Z))$ and $\phi=\phi_f$. Then, for each $y>0$, we have \[\phi_\xi\left(\begin{pmatrix}y & \\ & 1 \end{pmatrix}\right)=
\begin{cases}
a_f(n)e^{-2\pi n y}, \ \text{if} \ \xi=n\in\Z\\
0, \ \text{otherwise}.
\end{cases}
\] As such, \[\phi\left(\begin{pmatrix}1 & x \\ & 1\end{pmatrix} \begin{pmatrix} y & \\ & 1 \end{pmatrix}\right)=\sum_{\xi\in\Q} \phi_\xi((\begin{smallmatrix}y & \\ & 1\end{smallmatrix}))\psi(\xi x)=\sum_{n\in\Z} a_f(n)e^{2\pi inz}=f(z).\] We will see in Chapter \ref{Chap3} how we can generalise this result to a much larger family of automorphic forms.  What this result shows is why we want to study $\phi$ as if we can understand $\phi$ then we can understand $f$. Moreover, understanding $\phi_\xi((\begin{smallmatrix}y & \\ & 1 \end{smallmatrix}))$ allows us to gain knowledge about $a_f(n)$.

This section has been quite technical and it is easy to get lost in the details. So, we review the main strategy that will be employed in later chapters.  Theorem \ref{ftopi} means that we can reformulate (when needed) the questions for classical cuspforms in terms of questions about representations of $\GL_2(\A)$. This, in turn, allows us to use all the tools from representation theory.  Once, we have proven results on the adelic side we then use these to prove the original classical question we were interested in. Of course, in general, there is no guarantee that using this overall approach will be any easier or more fruitful than tackling the original classical problem directly. However, if we can prove results it gives us insight into how we can generalise results to other types of modular forms, such as Hilbert modular forms, this is something we will see in Chapter \ref{Chap3}.

\section{Whittaker model}\label{whittaker}

The aim of this section is to introduce and provide the definition of the local and global Whittaker model for $\Q$. We will see in Chapter \ref{Chap3} how to do this in a more general setting. From Section \ref{gobetween} we have that there is a bijection between classical and adelic newforms, $f\leftrightarrow\phi_f$. The overall aim of this work is to study the arithmetic of the Fourier coefficients of $f$ via studying them adelically. We would also like a method that turns the global classical problem into a local one, where we can study prime by prime. To do this we will use the Whittaker model. This will be one of the most important objects studied in this thesis, the methods we use to investigate the problems discussed in this thesis rely heavily on the properties of the Whittaker model.  We will show that for $\GL_2$ the Whittaker model always exists and is unique.  This is something that is not true for other groups. 

We first construct the local Whittaker model and then use this to construct the global Whittaker model. Let $\psi$ be a non-trivial additive character of $\Qp$ and let $\pi_p$ be an irreducible admissible representation of $\GL_2(\Qp)$, with $p<\infty$. Define $\W(\psi_p)$ as the space of functions satisfying \[W_p\left(\begin{pmatrix} 1 & x \\&1\end{pmatrix} g\right)=\psi_p(x)W_p(g), \ x\in F \ \text{and} \ g\in\GL_2(\Qp).\] Inside $\W(\psi_p)$ there is a unique subspace $\W(\pi_p,\psi_p)$ isomorphic to $\pi_p$ (under the right translation action). This result is known as the multiplicity one theorem a proof of which can be found in \cite[Chapter 3]{Bump} for example. A space of functions $\W(\pi_p,\psi_p)$ satisfying these conditions is called a Whittaker model for the representation $\pi_p$ with respect to $\psi_p$. To utilise this result we also require an analogous result for the archimedean place. For $\GL_2(\R)$ this is described in \cite[Chapter 3]{DG2}. In the case of $\GL_2(\C)$ this can be found in \cite{TadM} and \cite{popa}.

So, now we have a Whittaker model for all places we can think about how to construct a global Whittaker model. As one might suspect this will just be the product of all the local Whittaker functions. With this in mind, it will look very similar to the local model. Let $\pi$ be an irreducible admissible representation of $\GL_2(\A)$. Then we mean  a Whittaker model of $\pi$ with respect to $\psi$ a non-trivial character of $\A/\Q$, the space of smooth $K$-finite functions on $\GL_2(\A)$ satisfying \[W\left(\begin{pmatrix} 1 & x \\ & 1 \end{pmatrix} g \right)=\psi(x)W(g).\] It is assumed that the functions $W\in \W(\pi,\psi)$ are slowly increasing. 

Let $\pi$ be an irreducible admissible representation of $\GL_2(\A)$. Then $\pi$ has a Whittaker model $\W(\pi,\psi)$ with respect to $\psi$ an additive character $\A/\Q$ if and only if $\pi_p$ has a Whittaker model $\W(\pi_p,\psi_p)$ with respect to $\psi_p$ an additive character of $\Qp$. If this is the case then $\W(\pi,\psi)$ is unique and consists of all finite linear combinations of functions of the form \[W(g)=\prod_{p\leq\infty} W_p(g_p),\] where $W_p\in\W(\pi_p,\psi_p)$ and $W_p=W_p^\circ$ for almost all $p$, where $W_p^\circ$ is the spherical element of $\W(\pi_p,\psi_p)$ normalised so that $W_p^\circ(k_p)=1$ for $k_p\in\GL_2(\Z_p)$.

The if part of this statement is proved by using the properties of the local Whittaker model. The only if part is done by saying if $\pi$ has a Whittaker model it must be the one described above, meaning that if $\pi$ does have a Whittaker model it is unique. For a full proof see \cite[Theorem 3.5.4]{Bump}. 

We have now come to the important case, for us, when $\pi$ is a cuspidal automorphic representation. As one might expect cuspidal automorphic representations for $\GL_2$ do have Whittaker models. However, if we consider other groups this might not necessarily be the case, for example, this is not true for the group $\GSp_4$. 

\begin{Thm}[Existence of Whittaker model for automorphic representations]
Let $\pi$ be a cuspidal automorphic representation of $\GL_2(\A)$. If $\phi\in\pi$ and $g\in\GL_2(\A)$ let 
\begin{equation}\label{WphiQ}
W_\phi(g)=\int_{\A/\Q}\phi\left(\begin{pmatrix} 1 & x \\ & 1 \end{pmatrix}g\right) \psi(-x) \ dx.
\end{equation}
 Then the space $\W(\pi,\psi)$ of functions $W_\phi$ is a Whittaker model for $\pi$. We have the expansion 
\begin{equation}\label{phiWQ}
\phi(g)=\sum_{\alpha\in\Q^\times}W_\phi\left(\begin{pmatrix} \alpha & \\ & 1 \end{pmatrix}g\right)
\end{equation}
\end{Thm}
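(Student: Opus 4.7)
The plan is to derive both assertions simultaneously from a single application of Fourier analysis on the compact abelian group $\A/\Q$. For each fixed $g \in \GL_2(\A)$, first consider the function $f_g : \A/\Q \to \C$ defined by $f_g(x) = \phi\left(\begin{pmatrix} 1 & x \\ & 1 \end{pmatrix} g\right)$; this descends to $\A/\Q$ because $\phi$ is left-invariant under $\GL_2(\Q)$, hence in particular under $\begin{pmatrix} 1 & \Q \\ & 1 \end{pmatrix}$. Since the Pontryagin dual of $\A/\Q$ is identified with $\Q$ via $\alpha \mapsto (x \mapsto \psi(\alpha x))$, standard abelian Fourier analysis yields
\[f_g(x) = \sum_{\alpha \in \Q} \hat{f}_g(\alpha)\, \psi(\alpha x), \qquad \hat{f}_g(\alpha) = \int_{\A/\Q} \phi\left(\begin{pmatrix} 1 & x \\ & 1 \end{pmatrix} g\right) \psi(-\alpha x)\, dx.\]

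Next I would identify these Fourier coefficients with values of $W_\phi$ along the torus. For $\alpha \in \Q^\times$, commute $\begin{pmatrix} \alpha & \\ & 1 \end{pmatrix}$ past the unipotent to rewrite $\begin{pmatrix} 1 & x \\ & 1 \end{pmatrix}\begin{pmatrix} \alpha & \\ & 1 \end{pmatrix} = \begin{pmatrix} \alpha & \\ & 1 \end{pmatrix}\begin{pmatrix} 1 & x/\alpha \\ & 1 \end{pmatrix}$, use left-invariance of $\phi$ under $\begin{pmatrix} \alpha & \\ & 1 \end{pmatrix} \in \GL_2(\Q)$, and change variables $y = x/\alpha$; because $|\alpha|_{\A} = 1$ by the product formula this substitution is measure-preserving on $\A/\Q$, giving the identity $W_\phi\left(\begin{pmatrix} \alpha & \\ & 1 \end{pmatrix} g\right) = \hat{f}_g(\alpha)$. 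The cuspidality hypothesis forces $\hat{f}_g(0) = 0$, so evaluating the Fourier series at $x = 0$ collapses to the claimed identity $\phi(g) = \sum_{\alpha \in \Q^\times} W_\phi\left(\begin{pmatrix} \alpha & \\ & 1 \end{pmatrix} g\right)$.

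To conclude that $\W(\pi, \psi) := \{W_\phi : \phi \in \pi\}$ really is a Whittaker model, I would verify two routine properties. The Whittaker transformation law $W_\phi\left(\begin{pmatrix} 1 & u \\ & 1 \end{pmatrix} g\right) = \psi(u) W_\phi(g)$ follows immediately from the change of variables $x \mapsto x - u$ in the defining integral, and the map $\phi \mapsto W_\phi$ intertwines right translation by construction. The Fourier expansion above shows that $W_\phi \equiv 0$ forces $\phi \equiv 0$, so the intertwining map is nonzero; irreducibility of $\pi$ then forces injectivity, so $\W(\pi,\psi) \cong \pi$ as $\GL_2(\A)$-representations.

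The main obstacle is analytic rather than algebraic: justifying pointwise convergence of the Fourier series for $f_g$ (not just $L^2$-convergence) requires enough smoothness and decay on $f_g$, which comes from the smoothness and $K$-finiteness built into the definition of an automorphic form and the compactness of $\A/\Q$. A secondary technical point is to confirm that the $W_\phi$ lie in the slowly increasing class required by the definition of $\W(\pi,\psi)$; this follows from the slow growth of $\phi$ together with compactness of the integration domain.
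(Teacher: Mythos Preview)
Your proposal is correct and follows essentially the same approach as the paper: Fourier analysis on the compact group $\A/\Q$, identification of the Fourier coefficients $\hat{f}_g(\alpha)$ with $W_\phi\big(\begin{smallmatrix}\alpha & \\ & 1\end{smallmatrix}\,g\big)$ via the commutation relation and a unimodular change of variables, cuspidality to kill the $\alpha=0$ term, and evaluation at $x=0$. Your write-up is in fact slightly more thorough than the paper's in that you explicitly verify the Whittaker transformation law, the intertwining property, and injectivity via irreducibility, whereas the paper simply asserts these follow from $\phi$ being a cuspidal automorphic form.
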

As this result is rather important for our study we provide a proof of it here, following \cite{Bump}.
\begin{proof}
The function \[F(x)=\phi\left(\begin{pmatrix} 1 & x \\ & 1 \end{pmatrix}g\right), \ x\in\A,\] is continuous and since $\phi$ is an automorphic form we have $F(x+\alpha)=F(x)$, if $\alpha\in\Q$. So, we can view $F$ as a function on the compact group $\A/\Q$. Each such character has the form $x\mapsto \psi(\alpha x)$, where $\alpha\in\Q$. This means we can write $F$ as \[\phi\left(\begin{pmatrix} 1 & x \\ & 1 \end{pmatrix}g\right)=\sum_{\alpha\in\Q} C(\alpha)\psi(\alpha x),\] where the Fourier coefficients are \[C(\alpha)=\int_{\A/\Q}\phi\left(\begin{pmatrix} 1 & x \\ & 1 \end{pmatrix}g\right)\psi(-\alpha x) \ dx.\] Considering $\alpha=0$, we see that $C(0)=0$, since $\phi$ is cuspidal. As such, we can restrict to $\alpha\in\Q^\times$. Using the fact that $\phi$ is automorphic, specifically (i) from Definition \ref{auto}, we have 
\begin{align*}
C(\alpha)&=\int_{\A/\Q}\phi\left(\begin{pmatrix} \alpha & \\ & 1 \end{pmatrix} \begin{pmatrix} 1 & x \\ & 1 \end{pmatrix}g\right) \psi(-\alpha x) \ dx\\
&=\int_{\A/\Q}\phi\left(\begin{pmatrix} 1 & \alpha x\\ & 1 \end{pmatrix} \begin{pmatrix} \alpha &  \\ & 1 \end{pmatrix}g\right) \psi(-\alpha x) \ dx
\end{align*}
Now we can make a change of variables $x\mapsto \alpha^{-1}x$, which is unimodular since $\alpha\in\Q$. Therefore, 
\begin{align*}
C(\alpha)&=\int_{\A/\Q}\phi\left(\begin{pmatrix} 1 & \alpha\alpha^{-1} x\\ & 1 \end{pmatrix} \begin{pmatrix} \alpha &  \\ & 1 \end{pmatrix}g\right) \psi(-\alpha\alpha^{-1} x) \ dx\\
&=\int_{\A/\Q}\phi\left(\begin{pmatrix} 1 &  x\\ & 1 \end{pmatrix} \begin{pmatrix} \alpha &  \\ & 1 \end{pmatrix}g\right) \psi(- x) \ dx\\
&=W_\phi\left(\begin{pmatrix} \alpha & \\ & 1 \end{pmatrix} g\right), \ \text{by using \eqref{WphiQ}}.
\end{align*}
Therefore, \[\phi\left(\begin{pmatrix} 1 & x \\ & 1 \end{pmatrix}g\right)=\sum_{\alpha\in\Q} C(\alpha)\psi(\alpha x)=\sum_{\alpha\in\Q} W_\phi\left(\begin{pmatrix} \alpha & \\ & 1 \end{pmatrix} g\right)\psi(\alpha x).\] If we now set $x=0$, we obtain \eqref{phiWQ}. The properties that $\phi$ has from being a cuspidal automorphic form show that $W_\phi$ is a Whittaker model for $\pi$.
\end{proof}
Note, in addition, that this global Whittaker model is unique. We also have for $g=(g_p)_p\in\GL_2(\A)$ that \[W_\phi(g)=\prod_{p\leq\infty}\Wp(g_p).\] In the specific case $\pi$ is an automorphic representation of $\GL_2$ we denote $W_\phi$ to be the global Whittaker function and $\Wp$ for the local Whittaker function. This means understanding properties of $\phi$ can be reinterpreted into understanding $W_\phi$ which itself can be understood by studying $\Wp$. This is exactly what we were looking for as we can turn our global study into a local one. 

The final thing which we need to define in this section is the (local) Whittaker newform. The normalised Whittaker newform is the unique $K_p(p^{c(\pi_p)})$-invariant vector $\Wp\in\W(\pi_p,\psi_p)$ satisfying $\Wp(1)=1$.

The work carried out in this thesis exploits the bijection discussed in Section \ref{gobetween} and the Whittaker model described above. Precisely, we want to construct (or use) an explicit formula for the Fourier coefficients of modular forms in terms of adelic objects. In the setting of modular forms this was done in \cite[Proposition 3.3]{ACAS}, they prove,

\begin{equation}\label{af}
\af=\frac{a_f(n_0)}{n_0^{k/2}}\Big(\frac{n}{\delta(\c)}\Big)^{k/2}\prod_{p|N}\Wp \left(\begin{pmatrix}n/\delta(\c)& \\ & 1\end{pmatrix}\sigma^{-1}\right).
\end{equation}
Having this explicit formula allows us to reinterpret our arithmetic questions about the Fourier coefficients of classical newforms to that of the arithmetic of local Whittaker newforms. We construct an analogous formula to \eqref{af}  in Chapter \ref{Chap3} for Hilbert modular forms. In Chapter \ref{Chap4} we give the heuristics on how one can use \eqref{af} to obtain lower bounds for the $p$-adic valuation of $\af$.

\subsection{$p$-adic valuation}

In Chapter \ref{Chap2}, we explicitly compute lower bounds for the $p$-adic valuation of the local Whittaker newforms. The motivation for studying this problem is that we are interested in computing lower bounds for the $p$-adic valuations of the Fourier coefficients of modular forms at cusps. From \eqref{af} we see that if we can understand the $p$-adic valuation of $\Wp$ then this enables us to understand the $p$-adic valuation of $\af$.  

As we will be working purely locally in that chapter, instead of just studying $\Qp$ we can move to a finite extension of $\Qp$,  denoted $F$.  As mentioned above we know that the Whittaker model for $\GL_2(F)$ exists and is unique.  Let $\pi$ denote a unitary, irreducible, admissible representation of $\GL_2(F)$ then this question of computing $p$-adic valuations was considered by \u{C}esnavi\u{c}us, Neururer and Saha in \cite{MCMD}, where for their applications it was sufficient to consider the case of $\pi$ having trivial central character. We consider the case of non-trivial central character. We will impose that the central character of $\pi$ is an element of $\Hom_{\text{cts}}(\O_F^\times, \C^\times)$, where $\O_F^\times$ is the group of units of $F$. In this setting, all the representations of $\GL_2(F)$ are classified. This means that for $\pi$ being of a specific type, we have a Whittaker model and so we can study its properties. This is done by writing out explicitly the values of $W_\pi(g)$, for specific $g\in\GL_2(F)$.

Our method will rely on establishing lower bounds for the $p$-adic valuation for the Fourier coefficients of the local Whittaker newform. Once these results have been proven, in Chapter \ref{Chap4}, we explain how one can use them to compute $p$-adic valuations of the Fourier coefficients of newforms at cusps.

\section{Hilbert modular forms}\label{hilbert}

All of the discussion so far has been working over $\Q$. There is no reason why we cannot study the objects previously mentioned in more general settings, such as number fields. Indeed, many of the questions over $\Q$ can be reformulated to the setting of number fields. 

As we are interested in generalising modular forms it makes sense to consider the case of $F$ being a totally real number field of degree $n$. This is because there exists a natural generalisation of the action of $\GL_2^+(\R)$ on $\H$ to \[\GL_2^+(F)=\left\{g\in\GL_2(F): \det g \ \text{totally positive}\right\}\] on $\H^n$.

Due to this natural action, we can define modular forms over totally real number fields, these are called Hilbert modular forms. Statements for modular forms over $\Q$ can often be reformulated for Hilbert modular forms. In some situations the case of $\Q$ is resolved but for number fields remain open. If the problem stated for $\Q$ is proven, the method can be quite instructive and give an idea of how one should prove the case over number fields. Note, this might not always be such a fruitful approach as the work in Chapter \ref{Chap3} will show. 

An example that we have already seen is the modularity of elliptic curves over $\Q$.  The analogous statement would be replacing elliptic curves over $\Q$ with those over $F$ and newforms with Hilbert newforms. In the case of totally real number fields, this problem is wide open. A result of Freitas, Le Hung and Siksek \cite{FHS} show that any elliptic curve over a real quadratic number is modular. For number fields with degree greater than two, little is known. We have some results for specific number fields but few results for whole families of number fields. 

The work carried out in Chapter \ref{Chap3} looks at the algebraic properties of the Fourier coefficients of Hilbert modular forms. The way we will prove our result is by translating the problem into the adelic viewpoint like that described in Section \ref{ctoa}.  As such, there are some technical details that can cause confusion on first reading, especially for those unfamiliar with Hilbert modular forms and/or the adelic interpretation. Therefore, here we give a brief introduction to Hilbert modular forms and also an overview of the method used to prove our main result in Chapter \ref{Chap3}. 

Let $F$ be a totally real number field of degree $n$ and ring of integers $\O_F$. The size of the narrow class group is denoted by $h$ and $1\leq\mu\leq h$. For this basic introduction let the narrow class group of $F$ be of size one, we will refer to what happens in the general case. The work in Chapter \ref{Chap3} has no assumption on the size of the narrow class group, just that the central character of the automorphic representation is trivial. The $n$ embeddings of $F$ into $\R^n$ we denote by $\eta_1,...,\eta_n$. Then $\GL_2^+(F)$ acts on $\H^n$ by \[g\cdot z=\bigg(\frac{\eta_1(a_1)+\eta_1(b_1)}{\eta_1(c_1)z_1+\eta(d_1)},...,
\frac{\eta_n(a_n)+\eta_n(b_n)}{\eta_n(c_n)z_n+\eta_n(d_n)}\bigg).\] For an integral ideal $\n$ of $\O_F$ we can define the congruence subgroup $\Gamma(\n)<\GL_2^+(F)$ as \[\Gamma(\n)=\left\{\begin{pmatrix} a & b \\  c & d \end{pmatrix}: a,d \in\O_F, b\in\mathfrak{D}_F^{-1}, c\in\n \mathfrak{D}_F,ad-bc\in\O_F^\times\right\},\] where $\D_F$ is the absolute different of $F$.  This is one of the differences for this setting is that we need to consider congruence subgroups for $\GL_2^+(F)$. This is due to the fact that in general these congruent subgroups are indexed by the narrow class number. In this case, we would alter the definition by having $b\in(t_\mu)^{-1}\O_F\D_F^{-1}$ and $d\in\n t_\mu\O_F\D_F$ where $t_\mu$ an element of the narrow class group. For a function $\f:\H^n\rightarrow\C$ and $k=(k_1,...,k_n)\in\Z^n$ we can define a function \[\f||_k\alpha(z):=(\det\alpha)^{k/2}(cz+d)^{-k}\f(\alpha\cdot z),\] with $\alpha=\smatrix\in\Gamma(1)$.  

\begin{Defn}[Hilbert modular form,  $h=1$]
A holomorphic function $\f:\H^n\rightarrow\C$ is a Hilbert modular form for $\Gamma(\n)$ and weight $k\in\Z^n$, if \[\f||_k\alpha=\f, \ \text{for every} \ \alpha\in\Gamma(\n).\] 
\end{Defn}
Note that, in the case of $n>1$ the condition for $\f$ to be holomorphic at the cusps is automatic by the Koecher's principle, see \cite[Section 1.4]{HolHMF}.  Just as in the case of modular forms, we have that Hilbert modular forms have a Fourier expansion. Let $\Gamma(\n)$ be a congruence subgroup, then define \[\Lambda=\left\{\nu\in F:\begin{pmatrix} 1 & \nu \\ & 1 \end{pmatrix} \in \Gamma(\n)\right\}.\] Then $\f$ has a Fourier expansion of the form \[\f(z)=\sum_{\xi\in\Lambda^\ast} a_\f(\xi)e^{2\pi i \Tr(\xi z)},\] where \[\Lambda^\ast=\left\{\nu\in F:\Tr(\nu\Lambda)\subset\Z\right\}.\] A Hilbert modular form, $\f$, is a cuspform of weight $k$ with respect to $\Gamma(\n)$ if for every $g\in\GL_2^+(F)$ the Fourier expansion \[\f||_kg(z)=\sum_\xi a_\f(\xi;g)e^{2\pi i \Tr(\xi z)},\] has $a_\f(\xi;g)=0$ unless $\xi$ is totally positive.  In a similar way to how we defined newforms in Section \ref{ctoa} we have Hilbert newforms (which we define in Chapter \ref{Chap3}), $\f$ for $\n$ and weight $k$. In general, we will have that Hilbert newforms are a vector of cuspforms indexed by the size of the narrow class group. Precisely, we have $\f=(f_1,...,f_h)$, where $f_\mu$ are cuspforms of weight $k$ with respect to $\Gamma_\mu(\n)$. 

From now on we make no assumption on the size of the narrow class group. Let $a_\mu(\xi)$ be the Fourier coefficients of $f_\mu$ and define \[c_\mu(\xi;f_\mu)=N(t_\mu\O_F)^{-k_0/2}a_\mu(\xi)\xi^{(k_0\1-k)/2},\] where $k_0=\max\{k_1,...,k_n\}$.  Let $\Q(\f)$ denote the field generated by $c_\mu(\xi;f_\mu)$ as $\xi$ varies over $F$ and $\mu$ varies over $1\leq\mu\leq h$. This is a number field under the assumption that $k_1\equiv...\equiv k_n \pmod{2}$. We will be interested in the coefficients $c_\mu(\xi;f_\mu||_k\sigma)$ where $\sigma\in\Gamma_\mu(1)$. Specifically, an application of the $q$-expansion principle says that the field generated by $c_\mu(\xi;f_\mu||_k\sigma)$, as $\xi$ varies over $F$ and $\mu$ varies $1\leq\mu\leq h$,  lies in some cyclotomic extension of $\Q(\f)$. The work in Chapter \ref{Chap3} is all about studying this cyclotomic extension. 

We now give an overview of Chapter \ref{Chap3} which the reader can refer back to whilst reading that chapter. In Chapter \ref{Chap3}, as mentioned above the coefficients $c_\mu(\xi;f_\mu||_k\sigma)$ and the number field which they generate are the objects of interest. We know by the $q$-expansion principle that they must lie in the number field $\Q(\f)(\zeta_{N'})$, where $N'$ is the smallest integer such that $N'\Z=\n\cap\Z$.  We can prove that they actually lie in a smaller cyclotomic extension of $\Q(\f)$. 

For $\f$ a Hilbert newform, we can associate a representation $\Pi=\prod_{v\leq\infty}\Pi_v$, where the product is over all places of $F$. We will prove our result by moving to the adelic side and studying $\Pi$ and the Whittaker model associated to $\Pi$.

The method used to prove the main result in Chapter \ref{Chap3}, will rely on rewriting the Fourier coefficients of $c_\mu(\xi,f_\mu||_k\sigma)$ in terms of the global Whittaker newform. We can then break up the global Whittaker newform into a product of local Whittaker newforms. As a result, we can obtain an analogous formula to that of \eqref{af} but for Hilbert modular forms. 

The reason why we do this is because we know how $\tau\in\Aut(\C)$ acts on $\Wv$, where $\Wv$ is the Whittaker newform for $\Pi_v$. We then find sufficient conditions on $g\in\GL_2(F_v)$ (which will be a specific matrix) such that \[\tau (\Wv(g))=\Wv(g).\] Finding these conditions on $g$ will give us the main result. 

The question of finding the field generated by the Fourier coefficients of modular forms at cusps was already investigated by Brunault and Neururer in \cite{FEatC} for the case of $F=\Q$. The methods they use are classical in nature and do not use any properties of Whittaker newforms. A reasonable question to ask is why we do not just generalise their method to Hilbert modular forms.  In general, a Hilbert newform is a $h$-tuple $\f=(f_1,...,f_h)$ and so studying $\f$ and $c_\mu(\xi;f_\mu||_k\sigma)$ directly can be challenging, especially in the case of $h>1$.  Whilst,  the method we employ is not affected by the size of the narrow class group, it only appears in the element $g$ which $\Wv$ is evaluated at.  Another advantage to our method is that we obtain an explicit formula relating $c_\mu(\xi;f_\mu||_k\sigma)$ to $\Wv$. This explicit formula could then be used to prove other results.

The slight disadvantage to our method is that we find sufficient conditions, not necessary ones. This means we do not know if this field is the one generated by the Fourier coefficients or not. Note, that this is not immediate in the work of \cite{FEatC} and requires knowledge about the structure of congruence subgroups of $\SL_2(\Z)$. A brief discussion of this problem in our setting is provided in Chapter \ref{Chap4}.


\chapter{$p$-adic Valuation of Local Whittaker Newforms}\label{Chap2}
\chaptermark{Local Whittaker Newforms}

\section{Motivation}

Let $f$ be a normalised newform of level $N$ and weight $k$. Let $\sigma\in\SL_2(\Z)$ and $\c=\sigma\infty$ the corresponding cusp. Recall that $f|_k\sigma$ has a Fourier expansion of the form \[f|_k\sigma(z)=\sum_{n\geq0} \af e^{2\pi inz/w(\sigma)}, \ z\in\H.\] Let $p$ be an odd prime, then for every $q\in\Q$ we can write this as $q=\frac{a}{b}p^r$, for some $r\in\Z$ and $(a,b)=1$. Then $\valp(q):=r$, which can extended to $\overline{\Qp}\cong\C$. Let $\valp:\overline{\Qp}\rightarrow\Q\cup\{\infty\}$. Then, the question we are interested in is computing lower bounds for the $p$-adic valuation of the Fourier coefficients $\af$.  Specifically, we are interested in bounding \[\valp(f|_\c):=\inf_{n\geq0}\{\valp(\af)\}.\] From Section \ref{ctoa}, we have seen that if $f$ is a normalised newform then $f$ corresponds to a representation $\pi_f$ of $\GL_2(\A)$. By considering \eqref{af} we have one way to tackle the problem of computing $\valp(f|_\c)$ is by studying the $p$-adic valuation of the right-hand side of \eqref{af}. That is, we need to understand the $p$-adic valuation of \[\frac{a_f(n_0)}{n_0^{k/2}}\Big(\frac{n}{\delta(\c)}\Big)^{k/2}\prod_{p|N}\Wp\left( \begin{pmatrix}n/\delta(\c) & \\ & 1 \end{pmatrix}\sigma^{-1}\right).\] This means computing the $p$-adic valuation of $\af$, we need to study the $p$-adic valuation of the local Whittaker newform. If we consider the case of $f$ a newform for $\Gamma_0(N)$ then the corresponding representation $\pi_f$ has trivial central character. This case was studied by \u{C}esnavi\u{c}us, Neururer and Saha in \cite{MCMD}.

The work from now on in this chapter is purely local and so to ease the notation we remove any notation that refers to local places. In Chapter \ref{Chap4} we give an outline of how this local work can be used to help answer a global question, related to the Fourier coefficients of modular forms. As this is now a purely local question we can replace $\Qp$ with any finite extension $F$ of $\Qp$. The residue field of $F$ is denoted by $\F$ and the order of $\F$ is denoted by $q_F$. For any non-archimedean local field, we have that the Whittaker model exists and is unique for $\GL_2(F)$. Let $W_\pi$ be the normalised Whittaker newform associated to $\pi$, with $\pi$ being an irreducible, admissible, infinite-dimensional representation of $\GL_2(F)$. Let \[\gtlv=\begin{pmatrix} & \varpi_F^t \\ -1 & -v\varpi_F^{-l} \end{pmatrix}\in\GL_2(F), \ \text{for} \ t,l\in\Z \ \text{and} \ v\in\O_F^\times,\] where $\varpi_F$ is a uniformiser of $\O_F$.  Due to the decomposition of $\GL_2(F)$ we can restrict our attention to $W_\pi(\gtlv)$. Then, in \cite[Section 3]{MCMD} under the assumption that the central character is trivial, they prove.

\textit{For a finite extension $F/\Qp$, with $p$ odd, an irreducible, admissible, infinite-dimensional representation $\pi$ of $\GL_2(F)$ with $c(\pi)\geq2$ and trivial central character, an additive character $\psi:F\rightarrow\C^\times$ with $c(\psi)=0$, an isomorphism $\C\cong\overline{\Qp}$, a $t\in\Z$, an $0\leq l\leq c(\pi)$, and a $v\in\O_F^\times$, we have \[\valp(W_\pi(\gtlv))\geq 
\begin{cases}
0, \ \text{if} \ l\in\{0,c(\pi)\},\\
0, \ \text{if} \ l\in\{1,c(\pi)-1\}, \ \text{if} \ c(\pi)>2,\\
[\F:\F_p]\left(1-\frac{\min\{l,c(\pi)-l\}}{2}\right), \ \text{if} \  l\in\{0,1,c(\pi)/2,c(\pi)-1, c(\pi)\},\\
-[\F:\F_p]+\min\left\{\frac{[\F:\F_p]}{2},\frac{1}{2}+\frac{1}{p-1}\right\},\ \text{if} \  l=1, c(\pi)=2, t=-2,\\
[\F:\F_p](1-c(\pi)/4),  \ \text{if} \ l=c(\pi)/2, c(\pi)>2, t=-c(\pi),
\end{cases}\]
and for $l=c(\pi)/2$ and $c(\pi)$ even the following additional bounds hold
\begin{enumerate}
\item[(i)] if $\pi$ is supercuspidal with $c(\pi)=2$, then \[\valp(W_\pi(g_{t,1,v}))\geq-[\F:\F_p]+\frac{1}{2}+\frac{1}{p-1},\]
\item[(ii)] if $\pi$ is twist of the Steinberg with $c(\pi)=2$, then \[\valp(W_\pi(g_{t,1,v}))\geq-\frac{t+4}{2}[\F:\F_p]+\min\left\{-[\F:\F_p]\left(\frac{t+1}{2}\right),\frac{1}{2}+\frac{1}{p-1}\right\},\]
\item[(iii)] if $\pi$ is a principal series of the form $\mu| \cdot |_F^\vartheta\boxplus\mu|\cdot|_F^{-\vartheta}$, with $\mu^2=\1$, then $c(\pi)=2$ and \[\valp(W_\pi(g_{t,1,v}))\geq-[\F:\F_p]-(t+2)|\valp(q_F^\vartheta)|+\min\left\{-[\F:\F_p]\left(\frac{t+1}{2}\right),\frac{1}{2}+\frac{1}{p-1}\right\},\]
\item[(iv)] if $\pi$ is a principal series of the form $\mu| \cdot |_F^\vartheta\boxplus\mu^{-1}|\cdot|_F^{-\vartheta}$, with $\mu^2\neq\1$, then \[\valp(W_\pi(g_{t,l,v}))\geq
\begin{cases}
-\frac{[\F:\F_p](t+4)}{2}+\frac{1}{2}+\frac{1}{p-1}-(t+2)|\valp(q_F^\vartheta)|, \ c(\pi)=2,\\
-\frac{[\F:\F_p]\max\{t+c(\pi),c(\pi)/2-2\}}{2}-(t+c(\pi))|\valp(q_F^\vartheta)|, \ c(\pi)>2.
\end{cases}
\]
\end{enumerate}
}

They also obtain similar bounds for $p=2$. We do not study this case here due to some technical complications. A short discussion on this problem is given in Section \ref{unQ} and the subtle difficulties that appear. 

This leads to asking if we can obtain similar bounds for when we have a non-trivial central character. For a finite extension $F/\Qp$ let $\pi$ be a unitary, irreducible, admissible representation of $\GL_2(F)$ with central character $\cc$. Let $\X$ denote the set of multiplicative characters of $\chi:F^\times\rightarrow\C^\times$ such that $\chi(\varpi_F)=1$. Assuming that $\cc\in\X$, then we prove.

\begin{IntroThm}(Theorem \ref{api2})\label{pi2}
For a finite extension $F/\Qp$, with $p$ odd, an irreducible, admissible, infinite-dimensional representation $\pi$ of $\GL_2(F)$ with $c(\pi)=2$ and $\cc\in\X$. An additive character $\psi:F\rightarrow\C^\times$ with $c(\psi)=0$, an isomorphism $\C\cong\overline{\Qp}$, a $t\in\Z$, an $0\leq l\leq 2$, and a $v\in\O_F^\times$. We have 
\begin{enumerate}
\item[(i)] if $\pi$ is supercuspidal, then \[\valp(W_\pi(\gtlv))\geq-[\F:\F_p],\]
\item[(ii)] if $\pi$ is twist of Steinberg, then  \[\val_p(W_{\pi}(g_{t,l,v}))\geq -(t+3)[\F:\F_p]+\frac{1}{p-1},\]
\item[(iii)] if $\pi$ is a principal series representation of the form $\bone|\cdot|^{c_1}\boxplus\btwo|\cdot|^{c_2}$, with $\bone\neq\btwo$, then \[\valp(W_\pi(\gtlv)\geq-\frac{t+4}{2}[\F:\F_p]+\frac{2}{p-1}-(t+2)|\valp(\qcone)|,\]
\item[(iv)] if $\pi$ is principal series representation of the form $\bone|\cdot|^{c_1}\boxplus\btwo|\cdot|^{c_2}$, with $\bone=\btwo$, then \[\valp\left(W_\pi(\gtlv)\right)\geq-\frac{t+4}{2}[\F:\F_p]+\frac{1}{p-1}-(t+2)|\valp(\qcone)|.\]
\end{enumerate}
\end{IntroThm}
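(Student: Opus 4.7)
The plan is to proceed case by case through the classification of unitary irreducible admissible infinite-dimensional representations $\pi$ of $\GL_2(F)$ with $c(\pi) = 2$ and $\cc \in \X$, mirroring the strategy of \u{C}esnavi\u{c}ius--Neururer--Saha \cite{MCMD} but tracking the contribution of the central character throughout. Since the decomposition $\GL_2(F) = \bigsqcup_{t,l,v} B(F)\,\gtlv\, K_1(\mathfrak{p}^{c(\pi)})$ reduces everything to computing $W_\pi$ on the matrices $\gtlv$, for each of the four cases I would derive an explicit formula for $W_\pi(\gtlv)$ by starting from a Bruhat decomposition of $\gtlv$ that isolates the Weyl element $(\begin{smallmatrix} & 1 \\ -1 & \end{smallmatrix})$, applying the local functional equation, and using the explicit Kirillov or induced model of $\pi$, and then bound the resulting expression $p$-adically via the Stickelberger-type lower bound on $\valp(\tau(\chi,\psi))$ for a Gauss sum of a ramified character $\chi$.

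For the principal series representations (iii) and (iv), the equation $c(\pi) = c(\bone) + c(\btwo) = 2$ together with $\cc = \bone \btwo \in \X$ forces $c(\bone) = c(\btwo) = 1$; the Whittaker newform in the induced model evaluates at $\gtlv$ to a product $\tau(\bone,\psi)\tau(\btwo,\psi)$ of two Gauss sums times a power of $q_F$ depending on $t$ and $l$ in case (iii), so that the $\frac{2}{p-1}$ term in the bound is precisely twice the Stickelberger defect $\frac{1}{p-1}$ per factor, whereas the hypothesis $\bone = \btwo$ in case (iv) collapses one Gauss-sum factor to a unit and produces only a single $\frac{1}{p-1}$ defect. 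The correction $(t+2)|\valp(\qcone)|$ absorbs the quasi-character twists $|\cdot|^{c_1}, |\cdot|^{c_2}$ coming from the unitary parameters $c_1,c_2$. For the twist-of-Steinberg case (ii), writing $\pi = \chi \cdot \mathrm{St}$ with $c(\chi) = 1$ and computing $W_\pi(\gtlv)$ from the twisted Steinberg Whittaker newform reduces the problem to a single Gauss sum of $\chi$, which gives the $\frac{1}{p-1}$ Stickelberger term together with the linear-in-$t$ error $-(t+3)[\F:\F_p]$. For the supercuspidal case (i) with $p$ odd and $c(\pi) = 2$, $\pi$ is a minimal supercuspidal, and the compact-induction description of the normalised Whittaker newform together with the unitarity of $\pi$ yields $|W_\pi(\gtlv)| \leq q_F$ on the relevant matrices, whence $\valp(W_\pi(\gtlv)) \geq -[\F:\F_p]$ without any further Stickelberger correction.

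The hardest step will be the principal series case (iii). In \cite{MCMD} the assumption of trivial central character forces $\btwo = \bone^{-1}$, and via the reflection formula $\tau(\chi,\psi)\tau(\chi^{-1},\psi) = \chi(-1)\,q_F^{c(\chi)}$ the product of two Gauss sums collapses to a pure power of $q_F$, leaving only a single Stickelberger defect in the final bound; with $\cc \in \X$ the characters $\bone$ and $\btwo$ are genuinely independent, both Stickelberger defects contribute, and one must verify that no fortuitous cancellation in the explicit formula allows them to be merged. This independent contribution of the two Gauss sums is exactly the source of the $\frac{2}{p-1}$ (rather than $\frac{1}{p-1}$) in (iii) and the principal loss relative to the trivial-$\cc$ bounds. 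A secondary obstacle is the supercuspidal case, where no induction-from-Borel formula is available and one must verify that the Bushnell--Kutzko compact-induction model of the Whittaker newform for minimal supercuspidals remains compatible with a ramified central character and produces the claimed uniform bound across all admissible $(t,l,v)$.
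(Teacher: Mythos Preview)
Your proposal diverges from the paper's method and contains at least one genuine gap.

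\textbf{Methodological difference.} The paper does not compute $W_\pi(\gtlv)$ directly via a Bruhat decomposition and the functional equation. Instead it uses the local Fourier expansion
\[
W_\pi(\gtlv)=\sum_{\chi\in\X_{\leq l}} c_{t,l}(\chi)\,\chi(v),
\]
imports Assing's explicit formulas for the coefficients $c_{t,l}(\chi)$ (which already package the local functional equation), rewrites each in terms of $\GL_1$ $\eps$-factors, applies the valuation formula \eqref{l1}, and then takes the minimum over $\chi$. The endpoints $l\in\{0,c(\pi)\}$ are handled separately via the generalised Atkin--Lehner relation (Proposition~\ref{Won}). Your picture of $W_\pi(\gtlv)$ as a single product of Gauss sums is therefore an oversimplification: it is a \emph{sum} of such terms indexed by $\chi\in\X_{\leq l}$, and the stated bounds arise from identifying which $\chi$ minimises $\valp(c_{t,l}(\chi))$. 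In particular, for case~(iii) the dominant term is not $\eps(1/2,\bone)\eps(1/2,\btwo)$ but rather the term at $\chi=\bi^{-1}$ involving $\eps(1/2,\bi^{-1})\eps(1/2,\bi\bj^{-1})$, and for case~(iv) the dominant term is at $\chi=\beta^{-1}$ with a single factor $\eps(1/2,\beta^{-1})$; your heuristic for the $\tfrac{2}{p-1}$ versus $\tfrac{1}{p-1}$ discrepancy lands on the right numerology but misidentifies the mechanism.

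\textbf{Genuine gap in the supercuspidal case.} Your argument that unitarity gives $|W_\pi(\gtlv)|\leq q_F$ and hence $\valp(W_\pi(\gtlv))\geq -[\F:\F_p]$ conflates the archimedean and $p$-adic absolute values: a complex bound $|z|\leq q_F$ says nothing about $\valp(z)$. The paper instead uses that for $p$ odd a supercuspidal $\pi$ with $c(\pi)=2$ is dihedral, associated to a character $\xi$ of a quadratic extension $E/F$ with $c(\xi)=1$, so that $\eps(1/2,\pi)=\gamma\,\eps(1/2,\xi)$ and the valuation formula \eqref{l1} over $E$ controls $\valp(\eps(1/2,\pi))$. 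The bound $-[\F:\F_p]$ for $l\in\{0,2\}$ then falls out of Proposition~\ref{Won}(i), and the Fourier-coefficient analysis covers $l=1$. Without invoking the dihedral description and the $\GL_1$ $\eps$-factor over $E$, your supercuspidal argument does not go through.
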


In the setting of $c(\pi)>2$ we prove.

\begin{IntroThm}(Theorem \ref{apiplus})\label{pi2plus}
For a finite extension $F/\Qp$, with $p$ odd, an irreducible, admissible, infinite-dimensional representation $\pi$ of $\GL_2(F)$ with $c(\pi)>2$ and $\cc\in\X$. An additive character $\psi:F\rightarrow\C^\times$ with $c(\psi)=0$, an isomorphism $\C\cong\overline{\Qp}$, a $t\in\Z$, an $0\leq l\leq c(\pi)$, and a $v\in\O_F^\times$. We have 
\begin{enumerate}
\item[(i)] if $\pi$ is supercuspidal, then \[\valp(W_\pi(\gtlv))\geq(1-l/2)[\F:\F_p],\]
\item[(ii)] if $\pi$ is twist of Steinberg, then \[\valp\left(W_\pi(\gtlv)\right)\geq\min\left\{-\frac{c(\pi)}{2}[\F:\F_p]+\frac{2}{p-1},-\left(2+t+\frac{c(\mu^{-1})}{2}\right)[\F:\F_p]\right\},\]
\item[(iii)] if $\pi$ is principal series representation of the form $\bone|\cdot|^{c_1}\boxplus\btwo|\cdot|^{c_2}$, with $\bone\neq\btwo$, 
\begin{enumerate}
\item if both of $c(\bone)$ and $c(\btwo)$ are greater than one, then 
\begin{align*}
\valp(W_\pi(\gtlv))&\geq\min\Bigg\{-\frac{c(\pi)}{2}[\F:\F_p]+\min\left\{\frac{1}{p-1},[\F:\F_p]-c(\pi)|\valp(\qcone)|\right\},\\
&-\frac{t+c(\bone^{-1})+1+c(\bi^{-1}\bj)}{2}[\F:\F_p]+\frac{1}{p-1}\\
&-(t+c(\bi^{-1}\bj)+1)|\valp(\qci)|\Bigg\}\\
&+
\begin{cases}
-|c(\bone^{-1})-c(\btwo^{-1})|\cdot|\valp(\qcone),\ \text{if} \ \valp(\eps(1/2,\pi))>0,\\
0, \ \text{if} \ \valp(\eps(1/2,\pi))\leq0,
\end{cases}
\end{align*}
\item if one of $c(\bone)$ or $c(\btwo)$ is equal to one, then 
\begin{align*}
\valp\left(W_\pi(\gtlv)\right)&\geq\min\Bigg\{-\frac{c(\pi)}{2}[\F:\F_p]+\frac{2}{p-1}, \\
&-\frac{[\F:\F_p]}{2}+\frac{1}{p-1}-(c(\pi)-2)|\valp(\qcone)|,\\
&-\frac{t+3+c(\bi^{-1}\bj)}{2}[\F:\F_p]+\frac{2}{p-1}\\
&-(t+c(\bi^{-1}\bj)+1)|\valp(\qci)|\Bigg\}\\
&+
\begin{cases}
-(c(\pi)-2)|\valp(\qcone)|-\frac{[\F:\F_p]}{2}, \ \text{if} \ \valp(\eps(1/2,\pi))>0,\\
0, \ \text{if} \ \valp(\eps(1/2,\pi))\leq0.
\end{cases}
\end{align*}
\end{enumerate}
\item[(iv)] if $\pi$ principal series representation of the form $\bone|\cdot|^{c_1}\boxplus\btwo|\cdot|^{c_2}$, with $\bone=\btwo$, then 
\begin{align*}
\valp\left(W_\pi(\gtlv)\right)\geq\min\bigg\{&-\frac{c(\pi)[\F:\F_p]}{2}+\frac{2}{p-1},\\
&-\frac{(t+c(\beta^{-1})+2)[\F:\F_p]}{2}-(t+2)|\valp(\qcone)|\bigg\}.
\end{align*}
\end{enumerate}
\end{IntroThm}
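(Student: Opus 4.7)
\emph{Proof proposal.} The strategy is to adapt the approach of \cite{MCMD} for the trivial central character case, the new ingredient being the $p$-adic analysis of $\eps$-factors of ramified multiplicative characters of $F^\times$. The plan is, in each of the four cases (supercuspidal, twist of Steinberg, principal series with $\bone\neq\btwo$, principal series with $\bone=\btwo$), to derive an explicit expression for $W_\pi(\gtlv)$ as a sum of products of $\eps$-factors of characters of $F^\times$, and then to invoke standard lower bounds for $\valp(\eps(1/2,\chi))$ for such characters.

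First, I would exploit the Bruhat decomposition $\gtlv = a(\varpi_F^t)\,J\,n(v\varpi_F^{-l})$, where $J=\bigl(\begin{smallmatrix}&1\\-1&\end{smallmatrix}\bigr)$, together with the transformation rule $W_\pi(n(x)g)=\psi(x)W_\pi(g)$ for the Whittaker newform and the central-character relation (whose contribution at $\varpi_F^{t-l}$ is trivial since $\cc\in\X$), to reduce $W_\pi(\gtlv)$ to the evaluation of $W_\pi$ at a single element supported on the big Bruhat cell whose form depends only on $c(\pi)$, $t$, and $l$. All the type-specific information is then concentrated in this evaluation: for $\pi$ supercuspidal, the resulting value is a Gauss sum coming from the compact-induction realisation of $\pi$; for $\pi=\mu\otimes\mathrm{St}$ it is essentially $\eps(1/2,\mu)$; and for $\pi=\bone|\cdot|^{c_1}\boxplus\btwo|\cdot|^{c_2}$ the Jacquet integral in the induced model unfolds into a finite sum of products of $\eps$-factors of twists $\bi\chi$, $\bj\chi$ where $\chi$ runs over characters of $\O_F^\times$ whose conductor is controlled by $l$, $c(\pi)-l$, and $t$.

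Second, I would apply the standard $p$-adic estimate $\valp(\eps(1/2,\chi))\geq -\tfrac{c(\chi)}{2}[\F:\F_p]+\tfrac{1}{p-1}$ for any character $\chi$ of $F^\times$ of conductor $c(\chi)\geq 1$, the $\tfrac{1}{p-1}$ contribution arising from the fact that the relevant Gauss sum lies in $\mathbb{Z}[\mu_{p^\infty}]$. Substitution into the explicit formulas from the previous step produces the four bounds in the statement. The dichotomy in case (iii), split by the sign of $\valp(\eps(1/2,\pi))$, reflects whether an extra $\eps$-factor arising in the formula contributes positively or negatively to the $p$-adic valuation; in the positive case one incurs the additional correction $|c(\bone^{-1})-c(\btwo^{-1})|\cdot|\valp(\qcone)|$ which was absent in the trivial central character setting of \cite{MCMD}.

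The main obstacle is case (iii)(a), with both $c(\bone),c(\btwo)>1$: there the expression for $W_\pi(\gtlv)$ is a genuine sum (not a single term) of $\eps$-factors whose conductors are not all minimal, so a priori cancellation could either improve or degrade the bound. The plan is to estimate each summand individually against the worst-case $\eps$-factor bound using the non-archimedean triangle inequality, and then to verify that the central-character twist $\bone\btwo=\cc$ does not spoil the stationary-phase cancellation responsible for the extra $\tfrac{1}{p-1}$ gain. I expect the marginal sub-case $l=c(\pi)/2$ to require the most delicate accounting, since there the dominant and subdominant summands are of comparable $p$-adic size.
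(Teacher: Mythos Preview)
Your overall philosophy---express $W_\pi(\gtlv)$ in terms of $\eps$-factors and bound term by term---is the right one, and is also what the paper does. But several of your proposed technical steps are either confused or missing, and would not get you to the stated bounds.

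First, your opening reduction does nothing. The identity $\gtlv = a(\varpi_F^t)\,J\,n(v\varpi_F^{-l})$ is literally the definition of $\gtlv$, and the unipotent factor $n(v\varpi_F^{-l})$ sits on the \emph{right}, so the Whittaker transformation rule $W_\pi(n(x)g)=\psi(x)W_\pi(g)$ does not apply. You cannot eliminate the $v$-dependence this way; the dependence on $v\in\O_F^\times$ is genuine and is the whole point. What the paper actually does is expand in $v$: since $v\mapsto W_\pi(\gtlv)$ factors through $\O_F^\times/(1+\mathfrak m^l)$, one has a finite Fourier expansion $W_\pi(\gtlv)=\sum_{\chi\in\X_{\leq l}} c_{t,l}(\chi)\chi(v)$, and the coefficients $c_{t,l}(\chi)$ have been computed explicitly by Assing \cite{ASS19} for each type of $\pi$. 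The proof then bounds $\valp(c_{t,l}(\chi))$ for every $\chi\in\X_{\leq l}$ and takes the minimum. Your Jacquet-integral unfolding in the principal series case would, if carried out, essentially re-derive Assing's formulas, but you should be aware that this is a substantial computation, not a one-line manipulation.

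Second, you are missing the Atkin--Lehner reduction. The paper uses the relation $W_{\widetilde\pi}(g_{t,l,v})=\eps(1/2,\pi)\,\omega_\pi(v)\,\psi(\cdots)\,W_\pi(g_{t+2l-n,n-l,-v})$ to restrict the range of $l$ to $1\le l\le c(\pi)/2$ (or $1\le l\le\max\{c(\beta_1),c(\beta_2)\}$ in case (iii)), with $l\in\{0,c(\pi)\}$ handled separately by a direct $\eps(1/2,\pi)$ computation. The additive correction $\pm\valp(\eps(1/2,\pi))$ that appears in case (iii), which you correctly flag as the source of the sign dichotomy, comes precisely from this relation; without it you have no mechanism to produce that term.

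Third, your $\eps$-factor estimate $\valp(\eps(1/2,\chi))\ge -\tfrac{c(\chi)}{2}[\F:\F_p]+\tfrac{1}{p-1}$ is much too crude for $c(\chi)>1$: in fact $\valp(\eps(1/2,\chi))=0$ whenever $c(\chi)\ge 2$ (this is \cite[Corollary~2.7]{MCMD}), and the paper uses this sharp value repeatedly. With your weaker bound the case-by-case minima would come out wrong.
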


The proof for these results as well as the one stated above from \cite{MCMD} follow a very similar method. To obtain these bounds we use the local Fourier expansion of $W_\pi$ and analyse the Fourier coefficients $\ctl$. The work of Assing \cite{ASS19} gives under the assumption that $\cc\in\X$ explicit formulas for $\ctl$ (which we state in Section \ref{formulas}). These formulas involve Gauss sums which can be rewritten in terms of $\GL_1$ $\eps$-factors. The study of the $p$-adic valuation of these $\eps$-factors was carried out in \cite[Section 2]{MCMD}, where the assumption on $\pi$ having trivial central character is not needed. The main result for us from this work is \cite[Corollary 2.7]{MCMD} which tells us explicitly what the $p$-adic valuation of these $\GL_1$ $\eps$-factors is equal to. 

\subsection{Notation}

Here we collect the notation that will be used throughout this chapter. The majority of our notation follows that defined in \cite{MCMD}.

For $p$ a prime we denote $\val_p:\overline{\Q}_p\rightarrow\Q\cup\{\infty\}$ to be the $p$-adic valuation with $\val_p(p)=1$. We denote $F$ to be a non-archimedean field with ring of integers $\O_F$. We denote the maximal ideal of $\O_F$ by $\m_F$ and $\varpi_F$ a uniformiser. We denote the residue field $\mathbb{F}_F=\O_F/\m_F$ and $q_F:=|\mathbb{F}_F|$. We normalise the absolute value $|\cdot|_F$ on $F$ by $|\varpi_F|_F=1/q_F$. We set \[\zeta_F(s)=\frac{1}{1-q_F^{-s}},\] for which we only require \[\zeta_F(1)=\frac{q_F}{q_F-1} \ \text{and} \ \zeta_F(2)=\frac{q^2_F}{q^2_F{-1}}.\] To ease the notation, when clear,  we will drop the subscript $F$.

We let $\chi: F^\times\rightarrow\C^\times$ be a continuous character. We denote the conductor exponent of $\chi$ by $c(\chi)$, that is, $c(\chi)=n>0$ with $\chi(1+\m^n)=1$ if $\chi(\O_F^\times)\neq1$ and $c(\chi)=0$ if $\chi(\O_F^\times)=1$. If $c(\chi)=0$ we say $\chi$ is unramified. Let $\psi: F\rightarrow\C^\times$ be an additive character. Let $c(\psi)$ be the smallest integer $n$ such that $\psi(\m^n)=\{1\}$ (we will often normalise so that $c(\psi)=0$).  

\subsection{Local field Gauss sums}

For a finite extension $F/\Qp$, a multiplicative character $\chi:F^\times\rightarrow\C^\times$, a non-trivial additive $\psi:F\rightarrow\C^\times$, then the Gauss sum of $\chi$ with respect to $\psi$ is defined by \[G_\psi(x,\chi):=\int_{\O_F^\times}\chi(y)\psi(xy)\ d^\times y, \ \text{for} \ x\in F^\times, \ \text{with normalisation} \ \int_{\O_F^\times} d^\times y=1.\] Note that $G_\psi(x,\chi)$ only sees $\chi|_{\O_F^\times}$ so it is not affected by multiplying $\chi$ by an unramified character and so without loss of generality we can suppose $\chi$ is an element of \[\mathfrak{X}:=\{\text{continuous character} \ \chi: F^\times\rightarrow\C^\times \ \text{with} \ \chi(\varpi)=1\}\cong\Hom_{\text{cts}}(\O_F^\times,\C^\times).\] Characters in $\X$ are unitary and finite order. Consider the subsets of $\X$ defined by, \[\X_{\leq k}:=\{\chi\in\X:c(\chi)\leq k\} \ \text{and} \ \X_k:=\{\chi\in\X:c(\chi)=k\}.\] The Gauss sum $G_\psi(x,\chi)$ is related to $\eps$-factors of $\GL_1$, denoted $\eps(s,\chi,\psi)$, since in both of these $\psi$ is fixed we suppress the dependence of $\psi$ in the notation. The only expectation to this is when $\psi$ is composed with another character in which case we write the notation including that specific character. Under the common normalisation, $c(\psi)=0$, we have 
\begin{equation}\label{gauss}
G(x,\chi)=
\begin{cases} 1, \ \text{if} \  c(\chi)=0 \ \text{and} \ \val_F(x)\geq0,\\ 
-\frac{1}{q-1}, \ \text{if} \ c(\chi)=0 \ \text{and} \val_F(x)=-1,\\ 
\frac{q^{1-c(\chi)/2}}{q-1}\eps(1/2, \chi^{-1})\chi(x^{-1}), \ \text{if} \ c(\chi)>0 \ \text{and} \ \val_F(x)=-c(\chi),\\
0, \ \text{otherwise}.
\end{cases}
\end{equation}
Let $\chi,\chi'$ be multiplicative characters then, whenever $c(\chi')=0$, we have
\begin{equation}\label{twoc}
\eps(1/2,\chi\chi')=\chi'(\varpi)^{c(\chi)}\eps(1/2,\chi).
\end{equation}
For any $\chi$, we have 
\begin{equation}\label{product}
\eps(1/2,\chi)\eps(1/2,\chi^{-1})=\chi(-1).
\end{equation}
For both of these facts, see for example \cite[Section 1.1]{RS}. 

From \eqref{gauss} we see that the only time the $p$-adic valuation of $G(x,\chi)$ is non-trivial to compute is when $\chi$ is ramified and $\val_F(x)=-c(\chi)$. By a change of variables, we have \[G(xu,\chi)=\chi(u^{-1})G(x,\chi), \ \text{for} \ u\in \O_F^\times\] so it suffices to consider $G(\varpi^{-c(\chi)},\chi)$. For a detailed study of the $p$-adic valuation of $G(\varpi^{-c(\chi)},\chi)$ see \cite[Section 2]{MCMD}. For our purposes we only require the following result \cite[Corollary 2.7]{MCMD} that states $\eps(1/2,\chi)\in\overline{\Z}[\frac{1}{p}]^\times$ and for any isomorphism $\overline{\Qp}\cong\C$ we have
\begin{equation}\label{l1}
\val_p(\eps(1/2,\chi))=
\begin{cases}
-\frac{[\F:\F_p]}{2}+\frac{s(\chi^{-1})}{p-1}, \ \text{if} \ c(\chi)=1,\\
0, \ \text{if} \ c(\chi)>1.
\end{cases}
\end{equation}
All we need to know about $s(\chi^{-1})$ is that $s(\chi^{-1})=0$, if $\chi=\1$ and for $\chi\neq\1$, we have $1\leq s(\chi^{-1})\leq (p-1)[\F:\F_p]$, for more details see \cite[Chapter 6]{Was97}.

\subsection{Classification of $\pi$}\label{classofpi}

As discussed above the study of the $p$-adic valuation of $W_\pi$ relies on studying the $p$-adic valuation of $\ctl$. In turn, to study this, it will rely on the well-known classification of unitary, irreducible, admissible representations of $\GL_2(F)$, with $\cc\in\X$. The case of $c(\pi)=0$ is well understood so we focus on $c(\pi)\geq1$. Note that the assumption that $\cc\in\X$ does not affect the generality of our results as we can twist $\pi$ by an unramified character to ensure $\cc\in\X$.

\begin{enumerate}

\item $\pi$ is \textit{Supercuspidal}. In our case, since we are considering $p$ odd, we have that $\pi$ is \textit{dihedral}. This means that $\pi$ is associated to a character $\xi: E^\times\rightarrow\C^\times$ of a quadratic extension $E/F$ such that $\xi$ is non-trivial on the kernel of the norm map from $E^\times$ to $F^\times$. We have by \cite[Theorem 2.3.2]{RS}, 
\begin{equation}\label{apiSup}
c(\pi)=[\F_E:\F_F]c(\xi)+d_{E/F},
\end{equation}
where $d_{E/F}$ is the valuation of the discriminant of $E/F$. We have that 
\begin{equation}\label{epspiSup}
\eps(1/2,\pi)=\gamma\eps(1/2,\xi),
\end{equation}
for some $\gamma\in S^1$ and $\cc=\chi_{E/F}\cdot\xi|_{F^\times}$. The representation $\chi\pi$, where $\chi$ is a $\GL_1$ twist is also dihedral, as such, it is associated to a character of $E^\times$. This character is given by $\xi(\chi\circ N_{E/F}):E^\times\rightarrow\C^\times$ and we have \[\eps(1/2,\chi\pi)=\lambda\eps(1/2,\xi(\chi\circ N_{E/F}),\psi\circ\Tr_{E/F}), \ \text{for some} \ \lambda\in\{\pm1,\pm i\}.\]

\item $\pi$ is the \textit{twist of Steinberg} representation, $\pi=\mu$St, where $\mu$ is some unitary character satisfying $\mu(\varpi)=1$. In this case we have $\cc=\mu^2$ and $c(\pi)=\max\{1,2c(\mu)\}$. Furthermore, the $\eps$-factor is given by 
\begin{equation}\label{epspiSt}
\eps(1/2,\pi)=
\begin{cases}
-1, \ \text{if} \ \mu=\1,\\
\eps(1/2,\mu)^2, \ \text{if} \ \mu\neq\1.
\end{cases}
\end{equation}
We have the following two subcases. 
\begin{enumerate}
\item If $\mu\neq\1$, so $\pi=\mu$St and $c(\pi)=2c(\mu)$.

\item If $\mu=\1$, so $\pi=$ St and $c(\pi)=1$.
\end{enumerate}

\item $\pi$ is \textit{principal series}, $\pi=\mu_1\boxplus\mu_2$ for unitary characters $\mu_1$ and $\mu_2$. In particular $c(\pi)=c(\mu_1)+c(\mu_2)$ and $\omega_\pi=\mu_1\mu_2$. We can rewrite this representation in the following way. We have $\mu_1=\bone| \ |^{c_1}$ and $\mu_2=\btwo| \ |^{c_2}$, where $\bone,\btwo\in\X$ and $c_1+c_2\in i\R$. Therefore we have $\pi=\beta_1| \ |^{c_1}\boxplus\beta_2| \ |^{c_2}$. For $\pi$ being a principal series representation the $\eps$-factor is given by
\begin{equation}\label{epspiPS}
\eps(1/2,\pi)=\eps(1/2,\mu_1)\eps(1/2,\mu_2).
\end{equation}
This representation can be split into the following three subcases.

\begin{enumerate} 

\item If $\bone\neq\btwo$ and $c(\bone),c(\btwo)>0$. This can be treated as described above. 

\item If $\bone=\btwo$ and $c(\bone)=c(\btwo)>0$. This means that we can write \[\pi=\beta| \ |^{c_1}\boxplus\beta| \ |^{c_2},\] where $\beta\in\X$. Therefore, we have $c(\pi)=2c(\beta)$.

\item If $c(\bone)>c(\btwo)=0$. Thus, $\pi=\bone| \ |^{c_1}\boxplus| \ |^{c_2}$ and $c(\pi)=c(\bone)$.

\end{enumerate}
Note that since $\cc\in\X$ we have that $c_1=-c_2$.
\end{enumerate}

For ease, we will refer to these cases as $\pi$ being of Type 1, 2a, 2b, 3a, 3b or 3c (this numbering is not standard). If $\pi$ is of Type 3c, then this case has been studied in great detail in for example \cite{felicien} and \cite{Sah16} so we exclude this case. So, if $\pi$ is not of Type 3c and $c(\pi)>0$ then $c(\pi)\geq2$ and so the case of $\pi$ being of Type 2b also does not need to be considered. 

For each of these representation we can also define an $L$-factor for $\pi$. This is denoted by $L(s,\pi)$.  For the representations which we will be studying we have $L(s,\pi)=1$ see \cite[Section 2.1.6]{Sah16}. 

\section{Formulas for $c_{t,l}(\chi)$}\label{formulas}

In this section, we state the formulas for $c_{t,l}(\chi)$ for $\pi$ being of Type 1, 2a, 3a or 3b. All of the formulas stated in this section can be found in \cite[Section 2]{ASS19}. Here, we rewrite them in a way that is more convenient for our purposes.  The reason we do this is that $\ctl$ are the Fourier coefficients of $W_\pi$ and we can rewrite these in terms of objects for which we can directly compute the $p$-adic valuation of. The main one of these is the $\eps$-factors for $\chi$, which we can study the $p$-adic valuation of via \eqref{l1}.

For $\pi$ being of a specific type, the formulas for $\ctl$ depend on the specific multiplicative character $\chi$ and the values $t$ and $l$. In practice, we will have $t\in\Z$ and $0\leq l \leq c(\pi)$. These formulas contain Gauss sums and what we will do is evaluate $G(x,\chi)$ using \eqref{gauss} for all the possible cases. When evaluating the Gauss sum we only consider $l\geq1$. The case of $l=0$ is treated separately, Proposition \ref{Won}, using the generalised Atkin-Lehner relation.

\subsection{Supercuspidal representations, Type 1} \label{super} 

As stated in \cite[Section 2.1]{ASS19} We have, 
\begin{subnumcases}
{c_{t,l}(\chi)=}
\eps(1/2, \omega_\pi^{-1}\pi), \ \text{if} \ l=0, \ t=-n=-c(\pi), \ \chi=\1, \label{super1}\\ 
-\frac{1}{q-1}\eps(1/2,\omega_\pi^{-1}\pi), \ \text{if} \ l=1,t=-n, \chi=\1, \label{super2}\\ 
\frac{q^{1-l/2}}{q-1}\eps(1/2,\chi)\eps(1/2,\chi^{-1}\omega_\pi^{-1}\pi), \ \text{if} \ \chi\in\X_l, t=-c(\chi\pi), l>0, \label{super3}\\ 
0, \ \text{otherwise}. \label{super4}
\end{subnumcases}

\subsection{Twist of Steinberg, Type 2a} 
In this setting \cite[Lemma 2.1]{ASS19} states,
\begin{subnumcases}
{c_{t,l}(\chi)=}
\eps(1/2,\chi^{-1}\mu^{-1})^2G(\varpi^{-l},\chi^{-1}), \ \text{if} \ \chi\neq\mu^{-1}, t=-2c(\chi\mu), \label{twist1}\\ 
q^{-1}G(\varpi^{-l},\chi^{-1}), \ \text{if} \ \chi=\mu^{-1}, t=-2, \label{twist2}\\ 
-\zeta_F(2)^{-1}q^{-1-t}G(\varpi^{-l},\chi^{-1}), \ \text{if} \ \chi=\mu^{-1}, t>-2, \label{twist3}\\ 
0, \ \text{otherwise}. \label{twist4}
\end{subnumcases}

We now evaluate the Gauss sum in the above cases. We do this for all possible values of $l\geq1$.  If $l>1$, note from \eqref{gauss} either $\chi\neq\1$ or $G(\varpi^{-l},\chi^{-1})=0$. If $l=1$ and considering \eqref{twist1}, then we may have $\chi=\1$. Evaluating the Gauss sum in this case gives \[c_{-2,1}(\1)=\eps(1/2,\mu^{-1})^2G(\varpi^{-1},\1)=-\frac{1}{q-1}\eps(1/2,\mu^{-1})^2.\] Now suppose $l\geq1$ and $\chi\neq\1$. Starting with \eqref{twist1}, we have $\chi\neq\mu^{-1}$ and using \eqref{gauss} gives \[\ctl=\eps(1/2,\chi^{-1}\mu^{-1})^2G(\varpi^{-l},\chi^{-1})=\frac{q^{1-c(\chi)/2}}{q-1}\eps(1/2,\chi)\eps(1/2,\chi^{-1}\mu^{-1})^2.\] For \eqref{twist2}, then $\chi=\mu^{-1}$ and $t=-2$, we have \[\ctl=q^{-1}G(\varpi^{-l},\chi^{-1})=\frac{q^{-c(\chi)/2}}{q-1}\eps(1/2,\chi).\] With \eqref{twist3}, we have $\chi=\mu^{-1}$ and $t>-2$, then  
\begin{align*}
\ctl=-\zeta_F(2)^{-1}q^{-1-t}G(\varpi^{-l},\chi^{-1})&=-\frac{q^2-1}{q^2}q^{-1-t}G(\varpi^{-l},\chi^{-1})\\
&=-\frac{q^2-1}{q^2(q-1)}q^{-1-t}q^{1-c(\chi)/2}\eps(1/2,\chi)\\
&=-(q+1)q^{-2-t-c(\chi)/2}\eps(1/2,\chi).
\end{align*}
Otherwise, we have $\ctl=0$. Collecting all these cases together we have 
\begin{subnumcases}
{\ctl=}
-\frac{1}{q-1}\eps(1/2,\mu^{-1})^2, \ \text{if} \ \chi=\1, t=-2,  l=1,\label{twistC}\\
\frac{q^{1-c(\chi)/2}}{q-1}\eps(1/2,\chi)\eps(1/2,\chi^{-1}\mu^{-1})^2,\ \text{if} \ \chi\neq\mu^{-1}, t=-2c(\chi\mu), l=c(\chi), \label{twistH}\\ 
\frac{q^{-c(\mu^{-1})/2}}{q-1}\eps(1/2,\mu^{-1}), \ \text{if} \ \chi=\mu^{-1}, t=-2, \label{twistI}, l=c(\mu^{-1}),\\
-(q+1)q^{-2-t-c(\mu^{-1})/2}\eps(1/2,\mu^{-1}), \ \text{if} \ \chi=\mu^{-1}, t>-2, l=c(\mu^{-1}),\label{twistJ}\\ 
0, \ \text{otherwise}. \label{twistK}
\end{subnumcases}

\subsection{Irreducible principal series, Type 3a}\label{IPS} 

Recall this is the case of $c(\mu_i)>0$ for $i=1,2$ and $\bone\neq\btwo$. Using \eqref{twoc} and the fact $c_1=-c_2$ we have 
\begin{equation}\label{epsmu}
\eps(1/2,\mu_1)=q^{-c(\beta_1)c_1}\eps(1/2,\beta_1) \ \text{and} \ \eps(1/2,\mu_2)=q^{c(\beta_2)c_1}\eps(1/2,\beta_2).
\end{equation}
This is the first case of \cite[Lemma 2.2]{ASS19}. We have 
\begin{subnumcases}{\ctl=}
\eps(1/2,\chi^{-1}\mu_1^{-1})\eps(1/2,\chi^{-1}\mu_2^{-1})G(\varpi^{-l},\chi^{-1}), \label{ps1} \\
\quad \text{if} \ c(\chi\mu_1), c(\chi\mu_2)\neq0, t=-c(\chi\mu_1)-c(\chi\mu_2) \nonumber \\
-q^{-1/2}\mu_i(\varpi^{-1})\eps(1/2,\chi^{-1}\mu_j^{-1})G(\varpi^{-l},\chi^{-1}),\label{ps2}\\
\quad \text{if} \ c(\chi\mu_j)\neq c(\chi\mu_i)=0, \{j,i\}=\{1,2\}, t=-c(\chi\mu_j)-1 \nonumber\\
\zeta_F(1)^{-2}q^{-t/2}\mu_i(\varpi^{t+c(\chi\mu_j)})\mu_j(\varpi^{-c(\chi\mu_j)})G(\varpi^{-c(\chi\mu_j)},\chi\mu_j)G(\varpi^{-l},\chi^{-1}),\label{ps3}\\
\quad \text{if} \ c(\chi\mu_j)\neq c(\chi\mu_i)=0, \{j,i\}=\{1,2\}, t\geq-c(\chi\mu_j) \nonumber \\
0, \ \text{otherwise}. \label{ps4}
\end{subnumcases}

We now need to evaluate the Gauss sum for different values of $l$. Once again, we work through the values of $l\geq1$ for the cases \eqref{ps1}-\eqref{ps4}. Starting with $\chi=\1$. This can only occur in the setting of \eqref{ps1} with $t=-n$ and $l=1$. Therefore, by using \eqref{gauss} we have $G(\varpi^{-1},\1)=-\frac{1}{q-1}$. Now, by using \eqref{epsmu} we obtain \[c_{-n,1}(\1)=-\frac{1}{q-1}\eps(1/2,\mu_1^{-1})\eps(1/2,\mu_2^{-1})=-\frac{q^{c(\beta_1^{-1})c_1-c(\beta_2^{-1})c_1}}{q-1}\eps(1/2,\beta_1^{-1})\eps(1/2,\beta_2^{-1}).\] For the remaining cases we see if $\ctl\neq0$ we need $\chi\neq\1$ as $l=c(\chi)$. For \eqref{ps1}, that is $\chi\not\in\{\1,\beta_1^{-1},\beta_2^{-1}\}$ and $t=-c(\chi\mu_1)-c(\chi\mu_2)$. Here, we have \[\ctl=\frac{q^{1-c(\chi)/2}}{q-1}\eps(1/2,\chi)\eps(1/2,\chi^{-1}\mu_1^{-1})\eps(1/2,\chi^{-1}\mu_2^{-1}).\] On using \eqref{epsmu}, we obtain
\begin{align*}
\ctl&=\frac{q^{1-c(\chi)/2}}{q-1}\eps(1/2,\chi)\eps(1/2,\chi^{-1}\mu_1^{-1})\eps(1/2,\chi^{-1}\mu_2^{-1})\\
&=\frac{q^{1-c(\chi)/2}}{q-1}q^{c(\chi^{-1}\beta_1^{-1})c_1-c(\chi^{-1}\beta_2^{-1})c_1}\eps(1/2,\chi)\eps(1/2,\chi^{-1}\beta_1^{-1})\eps(1/2,\chi^{-1}\beta_2^{-1}).
\end{align*}
In the setting of \eqref{ps2}, we have $c(\chi\mu_i)=0$ and $t=-c(\chi\mu_j)-1$ this means that $\chi=\bi^{-1}$. On using this and \eqref{epsmu}, we have
\begin{align*}
\ctl&=-\frac{q^{(1-c(\chi))/2}}{q-1}\mu_i(\varpi^{-1})\eps(1/2,\chi)\eps(1/2,\chi^{-1}\mu_j^{-1})\\
&=-\frac{q^{(1-c(\bi^{-1}))/2}}{q-1}q^{c_i}q^{-c(\bi\bj^{-1})c_i}\eps(1/2,\bi^{-1})\eps(1/2,\bi\bj^{-1})\\
&=-\frac{q^{(1-c(\bi^{-1}))/2}}{q-1}q^{c_i-c(\bi\bj^{-1})c_i}\eps(1/2,\bi^{-1})\eps(1/2,\bi\bj^{-1}). 
\end{align*}
Our final non-trivial case, \eqref{ps3}, where again $\chi=\bi^{-1}$ and now $t>c(\chi\mu_j)$. We have 
\begin{align*}
\ctl&=\frac{(q-1)^2q^{-t/2}q^{1-c(\chi)/2}}{q^2(q-1)}\mu_i(\varpi^{t+c(\chi\mu_j)})\mu_j(\varpi^{-c(\chi\mu_j)})\eps(1/2,\chi)G(\varpi^{-c(\chi\mu_j)},\chi\mu_j)\\
&=(q-1)q^{-1-t/2-c(\chi)/2}\mu_i(\varpi^{t+c(\chi\mu_j)})\mu_j(\varpi^{-c(\chi\mu_j)})\eps(1/2,\chi)G(\varpi^{-c(\chi\mu_j)},\chi\mu_j)\\
&=\frac{(q-1)q^{-1-t/2-c(\chi)/2}q^{1-c(\chi\mu_j)/2}q^{-c_i(t+a(\beta_i^{-1}\beta_j))}}{q-1}\\
&\quad \times\mu_j(\varpi^{-c(\chi\mu_j)})\eps(1/2,\chi)\chi\mu_j(\varpi^{c(\chi\mu_j)})\eps(1/2,\chi^{-1}\mu_j^{-1})\\
&=q^{-t/2-c(\chi)/2-c(\chi\mu_j)/2}\mu_j(\varpi^{-c(\chi\mu_j)})q^{-c_i(t+c(\beta_i^{-1}\beta_j))}\eps(1/2,\chi)\chi\mu_j(\varpi^{c(\chi\mu_j)})\eps(1/2,\chi^{-1}\mu_j^{-1})\\
&=q^{-t/2-c(\bi^{-1})/2-c(\bi^{-1}\bj)/2}q^{-c_i(t+c(\beta_i^{-1}\beta_j))}\eps(1/2,\bi^{-1})\eps(1/2,\bi\mu_j^{-1}).
\end{align*}
Using \eqref{epsmu}, we have
\begin{align*}
\ctl&=q^{-t/2-c(\bi^{-1})/2-c(\bi^{-1}\bj)/2}q^{-c_i(t+c(\beta_i^{-1}\beta_j))}\eps(1/2,\bi^{-1})\eps(1/2,\bi\mu_j^{-1})\\
&=q^{-t/2-c(\bi^{-1})/2-c(\bi^{-1}\bj)/2-c(\bi\bj^{-1})c_i}q^{-c_i(t+c(\beta_i^{-1}\beta_j))}\eps(1/2,\bi^{-1})\eps(1/2,\bi\bj^{-1}). 
\end{align*}
On collecting all these cases together we obtain
\begin{subnumcases}{\ctl=}
-\frac{q^{c(\beta_1^{-1})c_1-c(\beta_2^{-1})c_1}}{q-1}\eps(1/2,\beta_1^{-1})\eps(1/2,\beta_2^{-1}), \label{psC}\\
\quad \text{if} \ \chi=\1,\beta_1^{-1},\beta_2^{-1}\neq\1, t=-c(\mu_1)-c(\mu_2), l=1, \nonumber \\
\frac{q^{1-c(\chi)/2}}{q-1}q^{c(\chi^{-1}\beta_1^{-1})c_1-c(\chi^{-1}\beta_2^{-1})c_1}\eps(1/2,\chi)\eps(1/2,\chi^{-1}\beta_1^{-1})\eps(1/2,\chi^{-1}\beta_2^{-1}), \label{psD} \\
\quad \text{if} \ \chi\notin\{\1,\beta_1^{-1},\beta_2^{-1}\}, t=-c(\chi\mu_1)-c(\chi\mu_2), l=c(\chi), \nonumber \\
-\frac{q^{(1-c(\bi^{-1}))/2}}{q-1}q^{c_i-c(\bi\bj^{-1})c_i}\eps(1/2,\bi^{-1})\eps(1/2,\bi\bj^{-1}), \label{psE}\\
\quad \text{if} \ \chi=\bi^{-1}, \{j,i\}=\{1,2\}, t=-c(\chi\mu_j)-1, l=c(\beta_i^{-1}), \nonumber\\
q^{-t/2-c(\bi^{-1})/2-c(\bi^{-1}\bj)/2-c(\bi\bj^{-1})c_i}q^{-c_i(t+c(\beta_i^{-1}\beta_j))}\eps(1/2,\bi^{-1})\eps(1/2,\bi\bj^{-1}), \label{psF}\\
\quad \text{if} \ \chi=\bi^{-1}, \{j,i\}=\{1,2\}, t\geq-c(\chi\mu_j), l=c(\beta_i^{-1}), \nonumber\\
0, \ \text{otherwise}. \label{psG}
\end{subnumcases}

\subsection{Irreducible principal series, Type 3b} 

We now consider the case of $\bone=\btwo$. This means that we can write \[\pi=\beta| \ |^{c_1}\boxplus\beta| \ |^{-c_1},\] where $\beta\in\X$. In this setting we have from the second part of \cite[Lemma 2.2]{ASS19},

\begin{subnumcases}
{\ctl=}
\eps(1/2,\chi^{-1}\mu_1^{-1})\eps(1/2,\chi^{-1}\mu_2^{-1})G(\varpi^{-l},\chi^{-1}), \label{ps5}\\
\quad \text{if} \ c(\chi\mu_1),c(\chi\mu_2)\neq0, t=-c(\chi\mu_1)-c(\chi\mu_2), \nonumber \\
q^{-1}G(\varpi^{-l},\chi^{-1}), \ \text{if} \ c(\chi\mu_1)=c(\chi\mu_2)=0, t=-2, \label{ps6}\\
-q^{-1/2}\zeta(1)^{-1}G(\varpi^{-l},\chi^{-1})(\mu_1(\varpi)+\mu_2(\varpi)), \label{ps7}\\
\quad \text{if} \ c(\chi\mu_1)=c(\chi\mu_2)=0, t=-1, \nonumber\\
q^{-t/2}G(\varpi^{-l},\chi^{-1})\Big(-q^{-1}\zeta_F(1)^{-1}(\mu_1(\varpi^{t+2})+\mu_2(\varpi^{t+2}))+\label{ps8} \\ 
\quad\quad\quad\quad\quad\quad\quad\quad \zeta_F(1)^{-2}\sum^t_{k=0}\mu_1(\varpi^k)\mu_2(\varpi^{t-k})\Big), \nonumber\\
\quad \text{if} \ c(\chi\mu_1)=c(\chi\mu_2)=0, t\geq0,  \nonumber\\
0, \ \text{otherwise}. \label{ps9}
\end{subnumcases}

We now evaluate the Gauss sum in \eqref{ps5}-\eqref{ps9}. We do this by following the same method as above, that is, considering all possible values for $l\geq1$. Note that \eqref{ps5} has the same formula as that of \eqref{ps1}. This means we can evaluate \eqref{ps5} in the same way as \eqref{ps1}, expect the terms involving $c_1$ will cancel out. As such, we will focus on the cases \eqref{ps6}-\eqref{ps9} and again note $\bone=\btwo$ and so the terms involving $c_1$ will cancel out. In these cases since $c(\chi\mu_1)=c(\chi\mu_2)=0$ we know that $\chi=\beta^{-1}$. More specifically, we do not have non-trivial Gauss sum for $l=0$. Starting with the case of \eqref{ps6}, that is, $\chi=\beta^{-1}$ and $t=-2$ with $l\geq1$, we have \[G(\varpi^{-l},\chi^{-1})=\frac{q^{1-c(\chi)/2}}{q-1}\eps(1/2,\chi)\chi(\varpi^{-l}).\] Therefore, \[q^{-1}G(\varpi^{-l},\beta)=\frac{q^{-1}q^{1-c(\beta^{-1})/2}}{q-1}\eps(1/2,\beta^{-1})=\frac{q^{-c(\beta^{-1})/2}}{q-1}\eps(1/2,\beta^{-1}).\] 
Now looking at the case of \eqref{ps7}, with $\chi=\beta^{-1}$ and $t=-1$, we have 
\begin{align*}
-q^{-1/2}\zeta_F(1)^{-1}G(\varpi^{-l},\chi^{-1})(\mu_1(\varpi)+\mu_2(\varpi))&=-q^{-3/2}(q-1)G(\varpi^{-l},\chi^{-1})(\mu_1(\varpi)+\mu_2(\varpi))\\
&=-\frac{(q-1)q^{-3/2}q^{1-c(\beta^{-1})/2}}{q-1}\eps(1/2,\beta^{-1})\\
&\quad \times(\mu_1(\varpi)+\mu_2(\varpi))\\
&=-q^{-1/2-c(\beta^{-1})/2}(\beta(\varpi)|\varpi|^{c_1}+\beta(\varpi)|\varpi|^{-c_1})\\
& \quad \times\eps(1/2,\beta^{-1})\\
&=-q^{-(1+c(\beta^{-1}))/2}\eps(1/2,\beta^{-1})(q^{-c_1}+q^{c_1}). 
\end{align*}
In the final non-trivial case, \eqref{ps8}, again we have $\chi=\beta^{-1}$ but now $t>0$. We obtain
\begin{align*}
\ctl&=q^{-t/2}G(\varpi^{-l},\chi^{-1})\Big(-q^{-1}\zeta_F(1)^{-1}\left(\mu_1(\varpi^{t+2})+\mu_2(\varpi^{t+2})\right)\\
& \quad\quad\quad\quad\quad\quad\quad\quad\quad +\zeta_F(1)^{-2}\sum^t_{k=0}\mu_1(\varpi^k)\mu_2(\varpi^{t-k})\Big)\\
&=-\frac{(q-1)q^{-t/2}}{q^2}G(\varpi^{-l},\chi^{-1})\Big(\mu_1(\varpi^{t+2})+\mu_2(\varpi^{t+2})-(q-1)\sum^t_{k=0}\mu_1(\varpi^k)\mu_2(\varpi^{t-k})\Big)\\
&=-\frac{(q-1)q^{-t/2}}{q^2}\frac{q^{1-c(\beta^{-1})/2}}{q-1}\eps(1/2,\beta^{-1})\Big(\mu_1(\varpi^{t+2})+\mu_2(\varpi^{t+2})\\
&\quad\quad\quad\quad\quad\quad\quad\quad\quad\quad\quad\quad\quad\quad\quad\quad\quad-(q-1)\sum^t_{k=0}\mu_1(\varpi^k)\mu_2(\varpi^{t-k})\Big)\\
&=-q^{-1-t/2-c(\beta^{-1})/2}\eps(1/2,\beta^{-1})\Big(|\varpi^{t+2}|^{c_1}+|\varpi^{t+2}|^{-c_1}-(q-1)\sum^t_{k=0}|\varpi^k|^{c_1}|\varpi^{t-k}|^{-c_1}\Big)\\
&=-q^{-(2+t+c(\beta^{-1}))/2}\eps(1/2,\beta^{-1})\Big(q^{-c_1(t+2)}+q^{c_1(t+2)}-(q-1)\sum^t_{k=0}q^{-kc_1+c_1(t-k)}\Big). 
\end{align*}
Collecting all these cases together gives 
\begin{subnumcases}
{\ctl=}
-\frac{1}{q-1}\eps(1/2,\beta^{-1})^2, \label{psH}\\
\quad \text{if} \ \chi=\1, l=1, c(\beta)\neq0, t=-c(\mu_1)-c(\mu_2)=-n, \nonumber \\
\frac{q^{1-c(\chi)/2}}{q-1}\eps(1/2,\chi)\eps(1/2,\chi^{-1}\beta^{-1})^2, \label{psI}\\
\quad \text{if} \ \chi\neq\1,\beta^{-1}, t=-2c(\chi\beta), l=c(\chi), \nonumber \\
\frac{q^{-c(\beta^{-1})/2}}{q-1}\eps(1/2,\beta^{-1}), \ \text{if} \ \chi=\beta^{-1}, t=-2, l=c(\beta^{-1}), \label{psJ}\\
-q^{-(1+c(\beta^{-1}))/2}\eps(1/2,\beta^{-1})(q^{-c_1}+\qcone), \ \text{if} \ \chi=\beta^{-1}, t=-1, l=c(\beta^{-1}), \label{psK}\\
-q^{-(2+t+c(\beta^{-1}))/2}\eps(1/2,\beta^{-1})\Big(q^{-c_1(t+2)}+q^{c_1(t+2)} \label{psL} \\ 
\quad\quad\quad\quad\quad\quad\quad -(q-1)\sum^t_{k=0}q^{c_1(t-2k)}\Big), \ \text{if} \ \chi=\beta^{-1}, t\geq0, l=c(\beta^{-1}),  \nonumber\\
0, \ \text{otherwise}. \label{psM}
\end{subnumcases}

\section{$p$-adic valuation}\label{padic}

In this section, we study the $p$-adic valuation of the local Whittaker newforms associated to an irreducible, admissible, infinite-dimensional representation $\pi$ of $\GL_2(F)$. To do this we utilise the Fourier expansion of $W_\pi$. In Section \ref{formulas} we have, in essence, explicitly written down the Fourier coefficients of $W_\pi$. We will see that proving the bounds in Theorem \ref{pi2} and \ref{pi2plus} boils down to finding lower bounds for $\valp(\ctl)$. We start by reviewing the local Whittaker newform and its Fourier expansion, (extending what is done in Chapter \ref{Chap1}).

\subsection{Fourier expansion of $W_\pi$}

In Section \ref{whittaker}, we showed that the local Whittaker model for $\GL_2(\Qp)$ exists and is unique. There is nothing special about $\Qp$ and in fact, the proofs are almost identical with $\Qp$ replaced by any non-archimedean local field, $F$. Here, since we are assuming $\cc\in\X$ then we need to make sure we have the correct group for which $W_\pi$ is invariant. In this case the normalised Whittaker newform $W_\pi$ is the unique vector in $\W(\pi,\psi)$ invariant under $K_1(n)$, where \[K_1(n)=\begin{pmatrix} 1+\m^n & \O_F \\ \m^n & \O_F \end{pmatrix} \cap \GL_2(\O_F),\] that satisfies $W_\pi(1)=1$.

Let $F$ be a non-archimedean local field. Then we have the following decomposition of $\GL_2(F)$ given by \[\GL_2(F)=\bigsqcup_{t\in\Z}\bigsqcup_{0\leq l \leq n}\bigsqcup_{v\in\O_F^\times/(1+\m^{\min\{l,n-l\}})}Z(F)U(F)\gtlv K_1(n),\] where $Z(F)$ is the centre of $\GL_2(F)$, $U(F)$ is the `upper right' unipotent subgroup of $GL_2(F)$ and \[\gtlv=\begin{pmatrix} \varpi^t & \\ & 1 \end{pmatrix} \begin{pmatrix} & 1 \\ -1 & \end{pmatrix} \begin{pmatrix} 1 & v\varpi^{-l}\\ & 1 \end{pmatrix}=\begin{pmatrix} & \varpi^t \\ -1 & -v\varpi^{-l}\end{pmatrix}\in\GL_2(F), \ \text{for} \ t,l\in\Z,v\in\O_F^\times.\] How to obtain this decomposition is described in \cite[Footnote 8]{MCMD} and \cite[Lemma 2.13]{Sah16}. It means that for $n$ fixed for each $g\in\GL_2(F)$ there exists a unique $l\in\Z$ satisfying $0\leq l \leq n$, such that \[g\in Z(F)N(F)\gtlv K_1(n),\] for $t\in\Z$ and $v\in\O_F^\times$. Therefore, we can restrict our attention to $\gtlv\in|GL_2(F)$ and reduce our study to the values $W_\pi(\gtlv)$. For fixed $t\in\Z$ and $l\geq0$ the function sending $v$ to $W_\pi(\gtlv)$ descends to the quotient $\O_F^\times/(1+\m^l)$. Therefore, using Fourier inversion there exists constants $\ctl\in\C$ for $\chi\in\X_{\leq l}$, such that 
\begin{equation}\label{FEW}
W_{\pi}(g_{t,l,v})=\sum_{\chi\in\X_{\leq l}}\ctl\chi(v), \ \text{for every} \ v\in\O_F^\times.
\end{equation}
Therefore, taking $p$-adic valuation of \eqref{FEW} gives
\begin{equation}\label{valW}
\valp(W_\pi(\gtlv))\geq\min_{\chi\in\X_{\leq l}}\{\valp(\ctl)\}.
\end{equation}
Before we prove Theorems \ref{pi2} and \ref{pi2plus}, in which we compute $\valp(W_\pi(\gtlv))$, we can reduce the range in which $l$ needs to vary. Precisely, for $c(\psi)=0$ and $0\leq l \leq c(\pi)$ \cite[Proposition 2.28]{Sah16} states, 
\begin{equation}\label{genAL}
W_{\wt{\pi}}(g_{t,l,v})=\eps(1/2,\pi)\omega_\pi(v)\psi(\varpi^{-t+l}v^{-1})W_\pi(g_{t+2l-n,n-l,-v}),
\end{equation}
where $\wt{\pi}=\omega_\pi^{-1}\pi$. This generalised Atkin-Lehner relation means that if we can prove the desired bounds for a shorter range in which $l$ is varying, we can obtain the results for the entire range of $0\leq l\leq c(\pi)$. Precisely, for $\pi$ being of Type 1, 2a or 3b we can prove bounds for $0\leq l \leq c(\pi)/2$ to obtain results in the full range. In the case of $\pi$ being of Type 3a we will have $l$ varying in the range $0\leq l \leq \max\{c(\bone),c(\btwo)\}$. The proposition below deals with the endpoints $l=0$ and $l=c(\pi)$. In the general setting, we will have $l$ varying in the ranges 
\begin{equation}\label{range}
1\leq l \leq c(\pi)/2, \ \text{if $\pi$ is Type 1, 2a, 3b and} \ 1 \leq l \leq \max\{c(\bone),c(\btwo)\}, \ \text{if $\pi$ is Type 3a}.
\end{equation}
If we consider the $p$-adic valuation of \eqref{genAL} we have 
\begin{equation} \label{valWon}
\valp(W_{\wtpi}(g_{t,l,v}))=\valp(\eps(1/2,\pi))+\valp(W_\pi(g_{t+2l-n,n-l,-v})).
\end{equation}
The relations \eqref{genAL} and \eqref{valWon} enable us to obtain $p$-adic valuations for $W_\pi(\gtlv)$ with $l\in\{0,c(\pi)\}$ without using the Fourier expansion and \eqref{valW}. After the following proposition, we explain how we can restrict our attention to the desired ranges in \eqref{range}.

\begin{Prop} \label{Won}
For a finite extension $F/\Qp$, with $p$ odd,  let $\pi$ be of Type 1, 2a, 3a or 3b. Let $\psi:F\rightarrow\C^\times$ be an additive character with $c(\psi)=0$, an isomorphism $\C\cong\overline{\Qp}$, a $t\in\Z$, an $l\in\{0,c(\pi)\}$, and a $v\in\O_F^\times$. We have 
\begin{enumerate}
\item[(i)] if $\pi$ is of Type 1, 2a, 3a or 3b and $c(\pi)=2$, then \[\valp(W_\pi(\gtlv))\geq-[\F:\F_p],\]
\item[(ii)] if $\pi$ is of Type 1, 2a or 3b with $c(\pi)>2$, then \[\valp(W_\pi(\gtlv))\geq0,\]
\item[(iii)] if $\pi$ is of Type 3a with $c(\pi)>2$ and $c(\bone),c(\btwo)>1$, then \[\valp(W_\pi(\gtlv))\geq -|c(\bone^{-1})-c(\btwo^{-1})|\cdot|\valp(\qcone)|,\]
\item[(iv)] if is of Type 3a with $c(\pi)>2$ and one of $c(\bone)$ or $c(\btwo)$ is equal to one, then \[\valp\left(W_\pi(\gtlv)\right)\geq-|c(\bj^{-1})-1|\cdot|\valp(\qci)|-\frac{[\F:\F_p]}{2}.\]
\end{enumerate}
\end{Prop}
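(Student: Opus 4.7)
The plan is to exploit the collapse of the Fourier expansion \eqref{FEW} at the two endpoints $l \in \{0, n\}$, where $n := c(\pi)$. For $l = 0$ one has $\X_{\leq 0} = \{\1\}$, so \eqref{FEW} reduces to $W_\pi(g_{t,0,v}) = c_{t,0}(\1)$, a single Fourier coefficient independent of $v$. For $l = n$, the generalised Atkin-Lehner relation \eqref{valWon}, applied with the roles of $\pi$ and $\wtpi = \cc^{-1}\pi$ swapped, gives
\[\valp(W_\pi(g_{t,n,v})) = \valp(\eps(1/2, \wtpi)) + \valp(W_{\wtpi}(g_{t+n, 0, -v})),\]
reducing the $l = n$ case for $\pi$ to the $l = 0$ case for $\wtpi$, up to an extra $\eps$-factor contribution.

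For each representation type I would then read $c_{t,0}(\1)$ off directly from the pre-evaluated Gauss sum formulas of Section \ref{formulas}, using that $G(1,\1) = 1$ by \eqref{gauss}. Concretely, \eqref{super1} yields $c_{-n,0}(\1) = \eps(1/2, \wtpi)$ for Type 1; \eqref{twist1} at $\chi = \1$ yields $c_{-2c(\mu),0}(\1) = \eps(1/2, \mu^{-1})^2$ for Type 2a; and \eqref{ps1} and \eqref{ps5} at $\chi = \1$ both yield $c_{-n,0}(\1) = \eps(1/2, \mu_1^{-1})\eps(1/2, \mu_2^{-1})$ for Types 3a and 3b; all other values of $t$ give zero. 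By \eqref{epspiSup}, \eqref{epspiSt} and \eqref{epspiPS}, each of these coincides with $\eps(1/2, \wtpi)$ up to a unit. I would then apply the unramified-twist identity \eqref{twoc} to separate the finite-order parts $\bi \in \X$ from the imaginary shifts in $\mu_i = \bi|\cdot|^{c_i}$, and finally invoke the valuation bound \eqref{l1} on each remaining $\eps(1/2, \chi)$ with $\chi \in \X$, producing (i)--(iv). Case (iii) is especially transparent: when both $c(\bi) > 1$, \eqref{l1} gives $\valp(\eps(1/2, \bi^{\pm 1})) \geq 0$, so only the unramified-twist factor $q^{c_1(c(\bone^{-1}) - c(\btwo^{-1}))}$ contributes, producing exactly $-|c(\bone^{-1}) - c(\btwo^{-1})| \cdot |\valp(\qcone)|$.

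The main obstacle will be in part (iv), the principal series case where one of $c(\bone), c(\btwo)$ equals $1$. Here \eqref{l1} jumps from giving $\valp \geq 0$ (when $c(\chi) > 1$) down to only $\valp \geq -[\F:\F_p]/2$ (when $c(\chi) = 1$), and this drop of $[\F:\F_p]/2$ propagates through the product $\eps(1/2, \mu_1^{-1}) \eps(1/2, \mu_2^{-1})$. In parallel, the asymmetric twist by $|\cdot|^{c_i}$ with $c_2 = -c_1$ contributes a factor $q^{c_i(c(\bi^{-1}) - c(\bj^{-1}))}$ whose $p$-adic valuation has absolute value at most $|c(\bj^{-1}) - 1| \cdot |\valp(\qci)|$, yielding exactly the stated correction. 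For the $l = n$ endpoint, the parallel analysis applied to $\wtpi$, combined with the representation-level analog of \eqref{product} --- namely that $\eps(1/2, \pi)\eps(1/2, \wtpi)$ is a root of unity and hence has trivial $p$-adic valuation --- shows that the two $\eps$-factor contributions appearing in \eqref{valWon} combine with trivial net valuation, so that the $l = n$ bound is automatically no worse than the $l = 0$ one.
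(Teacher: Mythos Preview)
Your proposal is correct and reaches the same bounds as the paper, but by the mirror-image route. The paper handles the endpoint $l=c(\pi)$ \emph{directly}: it observes that $g_{t,c(\pi),v}$ lies in the same $Z(F)U(F)\,\cdot\,K_1(n)$ coset as $a(\varpi^{t+2c(\pi)})z(v^{-1})$, then invokes \cite[Lemma~2.5]{Sah16} (which gives $W_\pi^*(a(\varpi^m))=\delta_{m,0}$ when $L(s,\pi)=1$) to conclude $\valp(W_{\wtpi}(g_{t,c(\pi),v}))\geq 0$. The Atkin--Lehner relation \eqref{valWon} then transfers this to $l=0$, producing $\valp(W_\pi(g_{t',0,v'}))\geq -\valp(\eps(1/2,\pi))$, and the remainder of the paper's proof is exactly the type-by-type bounding of $-\valp(\eps(1/2,\pi))$ via \eqref{l1}.

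You instead compute $l=0$ directly by collapsing the Fourier expansion to the single term $c_{t,0}(\1)$ and reading it off \eqref{super1}, \eqref{twist1}, \eqref{ps1}, \eqref{ps5} with $G(1,\1)=1$; this gives $\eps(1/2,\wtpi)$ on the nose, and you then run Atkin--Lehner in the opposite direction to handle $l=c(\pi)$. Since $\valp(\eps(1/2,\wtpi))=-\valp(\eps(1/2,\pi))$ (your representation-level analog of \eqref{product}), the two computations are literally the same type-by-type case analysis. Your route has the advantage of staying entirely inside the formulas already tabulated in Section~\ref{formulas} and avoids the matrix-invariant lemma and the external citation to \cite{Sah16}; the paper's route has the advantage of making explicit why $W_\pi(g_{t,c(\pi),v})$ has valuation $\geq 0$ without reference to the contragredient or to the identity $\eps(1/2,\pi)\eps(1/2,\wtpi)=\cc(-1)$.
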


\begin{proof}
Note that $g_{t,c(\pi),v}$ and \[\begin{pmatrix} \varpi^{t+2c(\pi)} & \\ & 1 \end{pmatrix} \begin{pmatrix} v^{-1} & \\ & v^{-1} \end{pmatrix},\] have the same invariants. Hence \[W_{\wtpi}(g_{t,c(\pi),v})=W_{\wtpi}\left(\begin{pmatrix} \varpi^{t+2c(\pi)} & \\ & 1 \end{pmatrix} \begin{pmatrix} v^{-1} & \\ & v^{-1} \end{pmatrix}\right).\] Recall that $W_{\wtpi}(g)=\cc^{-1}(\det(g))W_\pi^*(g)$. All of our representations have $L(s,\pi)=1$ and from \cite[Lemma 2.5]{Sah16} we have \[W_\pi^*(a(\varpi^{t+2c(\pi)}v))=
\begin{cases}
1, \ \text{if} \ t=0, L(s,\pi)=1,\\
0, \ \text{if} \ t\neq0, L(s,\pi)=1.
\end{cases}
\]
Therefore, 
\[\valp\left(W_\pi^*\left(a(\varpi^{t+2c(\pi)}v)\right)\right)=
\begin{cases}
0, \ \text{if} \ t=0, L(s,\pi)=1,\\
\infty, \ \text{if} \ t\neq0, L(s,\pi)=1.
\end{cases}
\]
This means in our setting we have $\valp\left(W_\pi^\ast(a(\varpi^{t+2c(\pi)}v))\right)\geq0$ and so \[\valp(W_{\wtpi}(g_{t,c(\pi),v}))\geq0.\] From \eqref{valWon}, we have \[\valp(\eps(1/2,\pi))+\valp(W_\pi(g_{t+c(\pi),0,-v}))\geq0.\] This means to establish bounds for \[\valp(W_\pi(g_{t+c(\pi),0,-v})),\] we need to compute bounds for \[-\valp(\eps(1/2,\pi)).\] If $c(\pi)=2$ and $\pi$ is of Type 1. Then from \eqref{apiSup}, we have $c(\xi)=1$. Recall from \eqref{epspiSup}, we have \[\eps(1/2,\pi)=\gamma\eps(1/2,\xi). \] Therefore, we have \[-\valp(\eps(1/2,\pi))=-\valp(\eps(1/2,\xi))=\frac{[\F_E:\F_p]}{2}-\frac{s(\xi^{-1})}{p-1}.\] Recall that, $1\leq s(\xi)\leq (p-1)[\F_E:\F_p]$. Thus, 
\begin{align*}
-\valp(\eps(1/2,\pi))&=-\valp(\eps(1/2,\xi))\\
&\geq\frac{[\F_E:\F_p]}{2}-\frac{(p-1)[\F_E:\F_p]}{p-1}\\
&=-[\F:\F_p]-2[\F:\F_p]\\
&=-[\F:\F_p].
\end{align*}
If $\pi$ is Type 2a with $c(\pi)=2$, then $c(\mu)=1$. So, recalling \eqref{l1}and \eqref{epspiSt}, we have 
\begin{align*}
-\valp(\eps(1/2,\pi))&=-2\valp(\eps(1/2,\mu))\\
&\geq\frac{2[\F:\F_p]}{2}-\frac{2(p-1)[\F:\F_p]}{p-1}\\
&=-[\F:\F_p].
\end{align*}
In the case that $\pi$ is Type 3a with $c(\pi)=2$ we have $c(\bone)=1=c(\btwo)$.  From \eqref{epspiPS}, we have 
\begin{align*}
-\valp(\eps(1/2,\pi))&=-\valp\left(q^{c(\bone^{-1})c_1-c(\btwo)c_1)}\right)+\left(\valp(\eps(1/2,\bone^{-1})+\valp(\eps(1/2,\btwo^{-1}\right)\\
&\geq\frac{[\F:\F_p]}{2}-\frac{(p-1)[\F:\F_p]}{p-1}+\frac{[\F:\F_p]}{2}-\frac{(p-1)[\F:\F_p]}{p-1}\\
&=-[\F:\F_p].
\end{align*}
Finally if $\pi$ is of Type 3b with $c(\pi)=2$ then $c(\beta)=1$. Therefore, we have 
\begin{align*}
-\valp(\eps(1/2,\pi))&=-2\valp(\eps(1/2,\beta)\\
&\geq\frac{2[\F:\F_p]}{2}-\frac{2(p-1)[\F:\F_p]}{p-1}.
\end{align*}
Therefore, \[\valp(W_\pi(g_{t+2,0,-v}))\geq-[\F:\F_p],\] giving (i). For the rest of the proof, we assume that $c(\pi)>2$. Returning to the setting when $\pi$ is of Type 1 and using \eqref{apiSup} we have $c(\xi)>1$. Hence, \[-\valp(\eps(1/2,\pi))=-\valp(\eps(1/2,\xi))=0.\] If $\pi$ is of Type 2, then we have $c(\mu)>1$ and similarly we have, \[-\valp(\eps(1/2,\pi))=-2\valp(\eps(1/2,\mu))=0.\] For $\pi$ being of Type 3b we have $c(\beta)>1$ so again \[-\valp(\eps(1/2,\pi))=-2\valp(\eps(1/2,\beta))=0.\] Thus, \[\valp(W_\pi(g_{t+c(\pi),0,-v}))\geq0,\] giving (ii). Now if $\pi$ is of type 3a In the setting where both $c(\bone)$ and $c(\btwo)$ are greater than one we have 
\begin{align*}
-\valp(\eps(1/2,\pi))&=-\valp\left(q^{c(\bone^{-1})c_1-c(\btwo^{-1})c_1}\right)+\valp(\eps(1/2,\bone^{-1}))+\valp(\eps(1/2,\btwo^{-1}))\\
&=-\valp\left(q^{c(\bone^{-1})c_1-c(\btwo^{-1})c_1}\right)\\
&=-(c(\bone^{-1})-c(\btwo^{-1}))\valp(\qcone)\\
&\geq-|c(\bone^{-1})-c(\btwo^{-1})|\cdot|\valp(\qcone)|.
\end{align*}
Therefore, \[\valp\left(W_\pi(g_{t+c(\pi),0,-v})\right)\geq-|c(\bone^{-1})-c(\btwo^{-1})|\cdot|\valp(\qcone)|,\] giving (iii). Finally, consider the case when one of $c(\bone)$ or $c(\btwo)$ is equal to one. In this setting, we can bound \[-\valp(\eps(1/2,\pi))\] in the following way, 
\begin{align*}
-\valp(\eps(1/2,\pi))&=-\valp\left(q^{c(\bj^{-1})c_1-c_1}\right)+\valp(\eps(1/2,\bi^{-1}))+\valp(\eps(1/2,\bj^{-1}))\\
&\geq-\valp\left(q^{c(\bj^{-1})c_i-c_i}\right)+\frac{[\F:\F_p]}{2}-\frac{(p-1)[\F:\F_p]}{p-1}\\
&\geq-|c(\bj^{-1})-1|\cdot|\valp(\qci)|-\frac{[\F:\F_p]}{2}.
\end{align*}
Thus, \[\valp\left(W_\pi(g_{t+c(\pi),0,-v})\right)\geq-|c(\bj^{-1})-1|\cdot|\valp(\qci)|-\frac{[\F:\F_p]}{2},\] giving (iv).
\end{proof}

This means for $c(\pi)=2$ we can use Proposition \ref{Won} to obtain results for $l=0$ and $l=c(\pi)=2$. Therefore, we just need to compute bounds for $l=1$ to understand $\valp(W_\pi(\gtlv))$. For $c(\pi)>2$ with $\pi$ being of Type 1, 2a or 3b and using \eqref{valWon} we have \[\valp(W_{\wtpi}(g_{t,l,v}))=\valp(W_\pi(g_{t+2l-n,n-l,-v})),\] so if $l\leq c(\pi)/2$ and we can compute the left hand side then for free we obtain bounds for the right hand side. When, $\pi$ is of Type 3a with $c(\pi)>2$, we have $\valp(\eps(1/2,\pi))$ can be non-zero. So, we need to include this in our final bounds. As, if we compute bounds for the left-hand side of \eqref{valWon}, say $\mathcal{B}$, then \[\valp(W_\pi(g_{t+2l-n,n-l,-v}))\geq\mathcal{B}-\valp(\eps(1/2,\pi)).\] Therefore, depending on the sign of $\valp(\eps(1/2,\pi))$,  $\valp(W_\pi(g_{t+2l-n,n-l,-v}))$ could have a smaller lower bound than the one computed for $\valp(W_{\wtpi}(g_{t,l,v}))$. This means that for $\pi$ being of Type 3a with $c(\pi)>2$ our final bounds will also include $-\valp(\eps(1/2,\pi))$.

\subsection{Statement of main results} \label{results}

Here we state the main result which gives lower bounds for $\valp(W_\pi(\gtlv))$ for each representation $\pi$, whereby the results are indexed by $\pi$ being of a certain type. We state by stating the result for $c(\pi)=2$ and then the result for $c(\pi)>2$. The reason for this is that the bounds obtained change depending on whether $c(\pi)=2$ or not. 

For $c(\pi)=2$, we have the following result.

\begin{Thm}\label{api2}
For a finite extension $F/\Qp$, with $p$ odd,  let $\pi$ be of Type 1, 2a, 3a or 3b with $c(\pi)=2$ and $\cc\in\X$. An additive character $\psi:F\rightarrow\C^\times$ with $c(\psi)=0$, an isomorphism $\C\cong\overline{\Qp}$, a $t\in\Z$, an $0\leq l\leq 2$, and a $v\in\O_F^\times$. We have 
\begin{enumerate}
\item[(i)] if $\pi$ is Type 1, then \[\valp(W_\pi(\gtlv))\geq-[\F:\F_p],\]
\item[(ii)] if $\pi$ is Type 2a, then  \[\val_p(W_{\pi,\psi}(g_{t,l,v}))\geq -(t+3)[\F:\F_p]+\frac{1}{p-1},\]
\item[(iii)] if $\pi$ is Type 3a, then \[\valp(W_\pi(\gtlv)\geq-\frac{t+4}{2}[\F:\F_p]+\frac{2}{p-1}-(t+2)|\valp(\qci)|,\]
\item[(iv)] if $\pi$ is Type 3b, then \[\valp\left(W_\pi(\gtlv)\right)\geq-\frac{t+4}{2}[\F:\F_p]+\frac{1}{p-1}-(t+2)|\valp(\qcone)|.\]
\end{enumerate}
\end{Thm}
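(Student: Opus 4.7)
My plan is to combine the Fourier expansion \eqref{valW} with the explicit formulas for $\ctl$ in Section \ref{formulas}, and to reduce the endpoints $l \in \{0, 2\}$ to the middle case $l = 1$ via the generalised Atkin--Lehner relation \eqref{valWon} and Proposition \ref{Won}(i). Applying \eqref{valWon} expresses $W_\pi(g_{t,0,v})$ and $W_\pi(g_{t,2,v})$ in terms of $W_{\wtpi}$ at $l' = 2$ and $l' = 0$; Proposition \ref{Won}(i) then gives $\valp \geq -[\F:\F_p]$ at these endpoints, which matches the bound in (i) directly. For Types 2a, 3a, 3b I would double-check that the stated $t$-dependent bound is no stronger than $-[\F:\F_p]$ on the (very restricted) set of $t$ for which $W_\pi(\gtlv)$ is nonzero at the endpoints, extracting the $\tfrac{1}{p-1}$ correction by revisiting the proof of Proposition \ref{Won} with the sharper input $s(\cdot) \geq 1$ from \eqref{l1}.

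The main work is the middle case $l = 1$. By \eqref{valW}, $\valp(W_\pi(g_{t,1,v})) \geq \min_{\chi \in \X_{\leq 1}} \valp(\ctl)$, so I would loop over $\chi \in \{\1\} \cup \X_1$ and, in each of the four types, read off $\valp(\ctl)$ from the closed-form expressions of Section \ref{formulas} using \eqref{l1}. The crucial computational input is that for ramified $\chi$ of conductor one, $\valp(\eps(1/2,\chi)) = -\tfrac{[\F:\F_p]}{2} + \tfrac{s(\chi^{-1})}{p-1}$ with $1 \leq s(\chi^{-1}) \leq (p-1)[\F:\F_p]$; this is the source of both the $-\tfrac{[\F:\F_p]}{2}$ structure and the $\tfrac{1}{p-1}$ corrections appearing in (ii)--(iv). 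In Type 1 the contributions assemble via \eqref{epspiSup} and the dihedral structure into the uniform bound $\geq -[\F:\F_p]$. In Type 2a the main non-vanishing case is \eqref{twistJ} at $\chi = \mu^{-1}$ and $t > -2$, which accounts for the factor $-(t+3)[\F:\F_p]$ in (ii); the $t = -2$ subcases from \eqref{twistC}--\eqref{twistI} give at least as strong a bound.

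The hardest step will be Types 3a and 3b, where I would subdivide according to $\chi \in \{\1, \bone^{-1}, \btwo^{-1}\}$ (respectively $\{\1, \beta^{-1}\}$) and evaluate each formula in \eqref{psC}--\eqref{psF} (respectively \eqref{psH}--\eqref{psL}). The main obstacle is bookkeeping of the unramified twist factors: under the fixed isomorphism $\C \cong \overline{\Qp}$, the quantities $\qci$ (with $c_1 + c_2 \in i\R$) have nonzero $p$-adic valuation of arbitrary sign, and the worst-case terms such as $q^{-c_i(t + c(\bi\bj^{-1}))}$ in \eqref{psF} must be bounded by their absolute values to produce the final summand $-(t+2)|\valp(\qci)|$ in (iii) and $-(t+2)|\valp(\qcone)|$ in (iv). A secondary subtlety in (iv) is the geometric-series term in \eqref{psL}, which after using the triangle inequality term-by-term collapses into the same $-(t+2)|\valp(\qcone)|$ factor. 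Once these contributions are collected and the minimum over $\chi$ is taken, the stated bounds emerge after a short but routine verification.
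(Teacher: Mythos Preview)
Your proposal is correct and follows essentially the same route as the paper: reduce to $l=1$ via Proposition~\ref{Won}(i) and the Atkin--Lehner relation, then bound $\min_{\chi\in\X_{\le 1}}\valp(\ctl)$ type by type using the explicit formulas of Section~\ref{formulas} together with \eqref{l1}. Your identification of the dominant terms (e.g.\ \eqref{twistJ} for Type~2a, \eqref{psF} for Type~3a, and the geometric sum in \eqref{psL} for Type~3b) matches the paper exactly, as does the observation that for Type~1 the final bound $-[\F:\F_p]$ is dictated by the endpoints rather than the (sharper) $l=1$ contribution.
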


We now state the general result for $c(\pi)>2$.

\begin{Thm}\label{apiplus}
For a finite extension $F/\Qp$, with $p$ odd,  let $\pi$ be of Type 1, 2a, 3a or 3b with $c(\pi)>2$ and $\cc\in\X$. An additive character $\psi:F\rightarrow\C^\times$ with $c(\psi)=0$, an isomorphism $\C\cong\overline{\Qp}$, a $t\in\Z$, an $0\leq l\leq c(\pi)$, and a $v\in\O_F^\times$. We have 
\begin{enumerate}
\item[(i)] if $\pi$ is Type 1, then \[\valp(W_\pi(\gtlv))\geq(1-l/2)[\F:\F_p],\]
\item[(ii)] if $\pi$ is Type 2a, then \[\valp\left(W_\pi(\gtlv)\right)\geq\min\left\{-\frac{c(\pi)}{2}[\F:\F_p]+\frac{2}{p-1},-\left(2+t+\frac{c(\mu^{-1})}{2}\right)[\F:\F_p]\right\},\]
\item[(iii)] if $\pi$ is Type 3a, then 
\begin{enumerate}
\item if both $c(\bone)$ and $c(\btwo)$ are greater than one then
\begin{align*}
\valp(W_\pi(\gtlv))&\geq\min\Bigg\{-\frac{c(\pi)}{2}[\F:\F_p]+\min\left\{\frac{1}{p-1},[\F:\F_p]-c(\pi)|\valp(\qcone)|\right\},\\
&-\frac{t+c(\bone^{-1})+1+c(\bi^{-1}\bj)}{2}[\F:\F_p]+\frac{1}{p-1}\\
&-(t+c(\bi^{-1}\bj)+1)|\valp(\qci)|\Bigg\}\\
&+
\begin{cases}
-|c(\bone^{-1})-c(\btwo^{-1})|\cdot|\valp(\qcone)|,\ \text{if} \ \valp(\eps(1/2,\pi))>0,\\
0, \ \text{if} \ \valp(\eps(1/2,\pi))\leq0,
\end{cases}
\end{align*}
\item if one of $c(\bone)$ or $c(\btwo)$ is equal to one then 
\begin{align*}
\valp\left(W_\pi(\gtlv)\right)&\geq\min\Bigg\{-\frac{c(\pi)}{2}[\F:\F_p]+\frac{2}{p-1}, \\
&-\frac{[\F:\F_p]}{2}+\frac{1}{p-1}-(c(\pi)-2)|\valp(\qcone)|,\\
&-\frac{t+3+c(\bi^{-1}\bj)}{2}[\F:\F_p]+\frac{2}{p-1}\\
&-(t+c(\bi^{-1}\bj)+1)|\valp(\qci)|\Bigg\}\\
&+
\begin{cases}
-(c(\pi)-2)|\valp(\qcone)|-\frac{[\F:\F_p]}{2}, \ \text{if} \ \valp(\eps(1/2,\pi))>0,\\
0, \ \text{if} \ \valp(\eps(1/2,\pi))\leq0.
\end{cases}
\end{align*}
\end{enumerate}
\item[(iv)] if $\pi$ is Type 3b, then  
\begin{align*}
\valp\left(W_\pi(\gtlv)\right)\geq\min\Bigg\{&-\frac{c(\pi)[\F:\F_p]}{2}+\frac{2}{p-1},\\
&-\frac{(t+c(\beta^{-1})+2)[\F:\F_p]}{2}-(t+2)|\valp(\qcone)|\Bigg\}.
\end{align*}
\end{enumerate}
\end{Thm}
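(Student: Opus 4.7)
The strategy will be the Fourier expansion \eqref{FEW}--\eqref{valW}, which reduces the theorem to bounding $\valp(c_{t,l}(\chi))$ for each representation type and each $\chi \in \X_{\leq l}$. The endpoint cases $l \in \{0, c(\pi)\}$ are already supplied by Proposition \ref{Won}. For intermediate $l$, the generalized Atkin-Lehner identity \eqref{genAL}--\eqref{valWon} allows me to restrict to the shorter ranges recorded in \eqref{range}, namely $1 \leq l \leq c(\pi)/2$ for Types 1, 2a, 3b and $1 \leq l \leq \max\{c(\bone),c(\btwo)\}$ for Type 3a. For Types 1, 2a, 3b in the $c(\pi)>2$ regime, the Atkin-Lehner correction $\valp(\eps(1/2,\pi)) = 0$ (a calculation already carried out inside Proposition \ref{Won}); for Type 3a, this correction can be non-zero, which is exactly what produces the sign-based case split in (iii).

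Next, I would substitute the explicit formulas from Section \ref{formulas} into the minimum in \eqref{valW} and bound each factor. Powers of $q$ contribute $\valp(q^a) = a[\F:\F_p]$, and the $\GL_1$ $\eps$-factors are controlled by the key input \eqref{l1}, giving $\valp(\eps(1/2,\chi)) = -[\F:\F_p]/2 + s(\chi^{-1})/(p-1)$ for $c(\chi) = 1$ and $0$ for $c(\chi) > 1$, combined with $0 \leq s(\chi^{-1}) \leq (p-1)[\F:\F_p]$. For Type 1 only formula \eqref{super3} is relevant in the reduced range, and since $c(\pi)>2$ forces $c(\xi)>1$ (as used in the proof of Proposition \ref{Won}) the associated $\eps$-factors have valuation zero, so only the $q^{1-l/2}/(q-1)$ factor contributes and (i) follows. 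For Type 2a, the branches \eqref{twistH} and \eqref{twistJ} give the two terms inside the minimum of (ii): the first via $q^{1-c(\chi)/2}/(q-1)$ paired with a conductor-one Gauss sum, the second via the explicit $q^{-2-t-c(\mu^{-1})/2}$ factor arising in the Steinberg-paired case $\chi = \mu^{-1}$. Type 3b is parallel: \eqref{psL} produces the $-(t+c(\beta^{-1})+2)[\F:\F_p]/2 - (t+2)|\valp(\qcone)|$ branch after estimating the alternating sum in its parenthetical factor by its worst term, while \eqref{psI} yields the $-c(\pi)[\F:\F_p]/2 + 2/(p-1)$ branch, giving (iv).

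The main obstacle, and the place where the bookkeeping is most delicate, is Type 3a (part (iii)). Here the formulas \eqref{psD}--\eqref{psF} mix asymmetric conductors $c(\bi^{-1})$, the auxiliary conductor $c(\bi^{-1}\bj)$ which may itself equal $1$ and thus carry a non-trivial $\eps$-factor valuation, and independent $\qci$-powers whose signs depend on $c_i = -c_j$; after Atkin-Lehner, one must also add the correction $-\valp(\eps(1/2,\pi)) = -(c(\bone^{-1})-c(\btwo^{-1}))\valp(\qcone)$, whose sign determines which of the two branches in the statement is picked up. The sub-split between "both $c(\bi)>1$" and "exactly one $c(\bi)=1$" reflects precisely whether the $\eps(1/2,\bi^{-1})$ factors contribute their $-[\F:\F_p]/2$ shift or vanish. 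The argument in each sub-case is to compute $\valp(c_{t,l}(\chi))$ separately for $\chi = \1, \bone^{-1}, \btwo^{-1}$ and generic $\chi \in \X_l$ (handling \eqref{psD}, \eqref{psE}, \eqref{psF} in turn), take the minimum over the reduced range of $l$, and finally add the Atkin-Lehner correction with its two-way sign branching to recover the stated bound. The remaining work is routine but lengthy arithmetic bookkeeping.
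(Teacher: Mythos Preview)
Your proposal is correct and follows essentially the same approach as the paper: reduce via \eqref{FEW}--\eqref{valW}, handle endpoints by Proposition~\ref{Won}, restrict $l$ via the Atkin--Lehner relation \eqref{valWon}, then bound each $\valp(\ctl)$ case-by-case using the explicit formulas of Section~\ref{formulas} together with \eqref{l1}, exactly as the paper does type by type. One small correction: for Type~1 in the reduced range you must also include the $\chi=\1$, $l=1$ branch \eqref{super2}, and when $c(\chi)=l=1$ the factor $\eps(1/2,\chi)$ does carry the nonzero valuation $-[\F:\F_p]/2+s(\chi^{-1})/(p-1)$ (the paper treats both of these explicitly), though neither affects the final bound for $l\geq 2$.
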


\subsection{Overview of proofs}

As the proofs of these theorems both follow a similar strategy we provide a brief overview of them. To obtain the bounds provided in Section \ref{results} we implement the method described below. This is very similar to the one used by the authors in \cite[Section 3]{MCMD} who obtain analogous results for trivial central character. Recall from \eqref{FEW} and \eqref{valW} we have that \[\valp(W_\pi(\gtlv))\geq\min_{\chi\in\X_{\leq l}}\{\valp(\ctl)\}.\] Due to \eqref{genAL} and Proposition \ref{Won}, we can restrict the range of which $l$ varies to those in \eqref{range}, in the case that $c(\pi)\geq2$. This means our first step is to consider all the possible different cases for $\ctl$, for $1\leq l \leq c(\pi)/2$ or $1\leq l \leq \max\{c(\bone),c(\btwo)\}$ for $\pi$ being of Type 3a, computed in Section \ref{formulas} and calculate their $p$-adic valuation. We rely heavily on \eqref{l1} which allows us to compute and obtain lower bounds for the $\eps$-factors which appear in the formulas for $\ctl$. As such, we can calculate lower bounds for $\valp(\ctl)$ in every setting described in Section \ref{formulas}. Once, we have computed these lower bounds for each case we then find the minimum of these as $\chi$ varies over $\X_{\leq l}$. 

\subsection{Proof of Theorems \ref{api2} \& \ref{apiplus}}

Though we stated the theorems for $c(\pi)=2$ and $c(\pi)>2$ separately, we provide the proof of these statements simultaneously. We will always use the bound that for a character being non-trivial, we have $s(\cdot)\geq1$. 

\subsubsection{$\pi$ is of Type 1} \label{type1val} 

Here, we compute the $p$-adic valuation of \eqref{super1}-\eqref{super4}. Note that, since $q$ is odd we have that $\wt{\pi}$ is also a dihedral representation. Therefore, we have that $\wt{\pi}$ is associated to a character $\xi'$ of $E^\times$, given by \[\xi'=\xi((\xi^{-1}|_{F^\times}\chi_{E/F})\circ N_{E/F})=\xi(\xi^{-1}|_{F^\times}\circ N_{E/F}).\] Just like \eqref{apiSup}, we have, $c(\wt{\pi})=[\F_E:\F]c(\xi')+d_{E/F}$. Recall that, we also have $c(\pi)=c(\wt{\pi})$. From \cite[Lemma 2.7]{ACAS} we know, \[c(\chi\pi)\leq\max\{c(\pi),c(\cc)+c(\chi),2c(\chi)\}.\] We start with the case of $c(\pi)=2$. If we first focus on the case of $\chi=\1$, then we are in the setting of \eqref{super2}, which means $l=1$ and $t=-2$. Therefore, we have 
\begin{align*}
\valp(c_{-2,1}(\1))&=\valp\Big(-\frac{1}{q-1}\eps(1/2,\cc^{-1}\pi)\Big)\\
&=\valp(\eps(1/2,\cc^{-1}\pi))\\
&=\valp(\eps(1/2,\xi',\psi\circ\Tr_{E/F})).
\end{align*}
Since $c(\pi)=2=c(\wtpi)$, we have that $c(\xi')=1$ and so using \eqref{l1}, gives \[\valp(c_{-2,1}(\1))\geq-\frac{2[\F:\F_p]}{2}+\frac{1}{p-1}=-[\F:\F_p]+\frac{1}{p-1}.\] We now look at when $\chi\neq\1$. This is the setting of \eqref{super3}. In this case, we have that $c(\chi)=1$. Thus, using \eqref{l1} we obtain, 
\begin{align*}
\valp(c_{-c(\pi),1}(\chi))&=\valp\Big(\frac{q^{1/2}}{q-1}\eps(1/2,\chi)\eps(1/2,\chi^{-1}\cc^{-1}\pi)\Big)\\
&\geq\frac{[\F:\F_p]}{2}-\frac{[\F:\F_p]}{2}+\frac{1}{p-1}+\valp(\eps(1/2,\chi^{-1}\cc^{-1}\pi))\\
&=\frac{1}{p-1}+\valp(\eps(1/2,\chi^{-1}\cc^{-1}\pi))\\
&=\frac{1}{p-1}+\valp(\eps(1/2,\chi^{-1}\wtpi)\\
&=\frac{1}{p-1}+\valp(\eps(1/2,\xi'(\chi^{-1}\circ N_{E/F}),\psi\circ\Tr_{E/F})).
\end{align*}
We have that $c(\xi'(\chi^{-1}\circ N_{E/F}))=1$ and so using \eqref{l1}, we have \[\valp(\eps(1/2,\xi'(\chi^{-1}\circ N_{E/F}),\psi\circ\Tr_{E/F}))\geq-\frac{2[\F:\F_p]}{2}+\frac{1}{p-1}=-[\F:\F_p]+\frac{1}{p-1}.\] Hence, \[\valp(c_{-c(\pi),1}(\chi))\geq-[\F:\F_p]+\frac{2}{p-1}.\] This is the only type for which Proposition \ref{Won} gives the best bounds. Therefore, from Proposition \ref{Won} (i) we have \[\valp(W_\pi(\gtlv))\geq-[\F:\F_p].\] We now turn our attention to the case when $c(\pi)>2$. Here, if $\chi=\1$, then we are in the setting of \eqref{super2} and so $t=-c(\pi)$ and $l=1$. Since, $c(\wtpi)>2$ we have $c(\xi')>1$ and so \[\valp(c_{-c(\pi),1}(\1))=0.\] If $\chi\neq\1$. Then, we know that $c(\chi)=l$, so if $l=1$ we have 
\begin{align*}
\valp(c_{-c(\pi),1}(\chi))&\geq\frac{1}{p-1}+\valp(\eps(1/2,\chi^{-1}\cc^{-1}\pi))\\
&=\frac{1}{p-1}+\valp(\eps(1/2,\xi'(\chi^{-1}\circ N_{E/F}),\psi\circ\Tr_{E/F})).
\end{align*}
Using \eqref{apiSup}, we have that $c(\xi')=c(\xi'(\chi^{-1}\circ N_{E/F}))$. Therefore, we have \[\valp(c_{-c(\pi),1}(\chi))\geq\frac{1}{p-1}.\] If $l>1$, then we have 
\begin{align*}
\valp(c_{-c(\pi),l}(\chi))&=\valp\Big(\frac{q^{1-l/2}}{q-1}\eps(1/2,\chi)\eps(1/2,\chi^{-1}\omega_\pi^{-1}\pi)\Big)\\
&=(1-l/2)[\F:\F_p]+\valp(\eps(1/2,\chi^{-1}\cc^{-1}\pi)).
\end{align*}
Again using \eqref{apiSup}, we have that \[\valp(\eps(1/2,\chi^{-1}\cc^{-1}\pi))=0.\] Thus, \[\valp(c_{-c(\pi),l}(\chi))=(1-l/2)[\F:\F_p].\] 

\subsubsection{$\pi$ is of Type 2a} \label{valT2}   

We now compute the $p$-adic valuation of $W_\pi(\gtlv)$ in the case that $\pi$ is of Type 2a. This means we need to compute $p$-adic valuations for \eqref{twistC}-\eqref{twistK}. We deal with the case of $c(\pi)=2$ first. Recall, that this means that $c(\mu)=1$. Starting with \eqref{twistC}, $\chi=\1$ and $t=-2$, we obtain \[\val_p\left(-\frac{1}{q-1}\eps(1/2,\mu^{-1})^2\right)=\val_p\left(\frac{1}{q-1}\right)+2\val_p\eps(1/2,\mu^{-1})).\] We can bound the second of these terms by using \eqref{l1}. Therefore, we have \[\val_p\left(-\frac{1}{q-1}\eps(1/2,\mu^{-1})^2\right)\geq-[\F:\F_p]+\frac{2}{p-1}.\] Considering $\valp$ for \eqref{twistH}, $\chi\neq\mu^{-1}$ and $t=-2$, we have 
\begin{align*}
\val_p\left(\frac{q^{1/2}}{q-1}\eps(1/2,\chi^{-1}\mu^{-1})^2\eps(1/2,\chi)\right)&=\val_p\left(\frac{1}{q-1}\right)+\val_p(q^{1/2})\\&\quad+2\val_p(\eps(1/2,\chi^{-1}\mu^{-1}))+\val_p(\eps(1/2,\chi)).
\end{align*}
Using \eqref{l1}, we have that \[\val_p(\eps(1/2,\chi^{-1}\mu^{-1}))=-\frac{[\F:\F_p]}{2}+\frac{s(\chi\mu)}{p-1} \ \text{and} \ \val_p(\eps(1/2,\chi))=-\frac{[\F:\F_p]}{2}+\frac{s(\chi^{-1})}{p-1}.\] Since, $\chi$ is non-trivial and $\chi\neq\mu^{-1}$ we know that $\chi\mu\neq\1$. Hence, we can bound both of $s(\chi\mu)$ and $s(\chi^{-1})$ by one. Therefore,
\begin{align*}
\val_p\left(\frac{q^{1/2}}{q-1}\eps(1/2,\chi^{-1}\mu^{-1})^2\eps(1/2,\chi)\right)&\geq\val_p\left(\frac{1}{q-1}\right)+\val_p(q^{1/2})-[\F:\F_p]\\
&\quad -\frac{1}{2}[\F:\F_p]+\frac{3}{p-1}\\
&=\frac{1}{2}[\F:\F_p]-\frac{1}{2}[\F:\F_p]-[\F:\F_p]+\frac{3}{p-1}\\
&=-[\F:\F_p]+\frac{3}{p-1}.
\end{align*}
In the setting of \eqref{twistI}, $\chi=\mu^{-1}$ and $t=-2$, we obtain by using \eqref{l1},
\begin{align*}
\valp\left(\frac{q^{-1/2}}{q-1}\eps(1/2,\chi)\right)&=\valp(q^{-1/2})+\valp\left(\frac{1}{q-1}\right)+\val_p(\eps(1/2,\chi))\\
&\geq-\frac{1}{2}[\F:\F_p]-\frac{1}{2}[\F:\F_p]+\frac{1}{p-1}\\
&=-[\F:\F_p]+\frac{1}{p-1}.
\end{align*}
In the final non-trivial case for $c(\pi)=2$, \eqref{twistJ}, $\chi=\mu^{-1}$ and $t>-2$, we have \[\val_p\big(-(q+1)q^{-(2+t+1/2)}\eps(1/2,\chi)\big)=\val_p\big(-(q+1)q^{-(t+5/2)}\big)+\val_p(\eps(1/2,\chi)).\] We can again use \eqref{l1} for $\val_p(\eps(1/2,\chi))$. So, \[\valp(\eps(1/2,\chi))\geq-\frac{[\F:\F_p]}{2}+\frac{1}{p-1}.\] Therefore,
\begin{align*}
\val_p\big(-(q+1)q^{-(2+t+1/2)}\eps(1/2,\chi)\big)&\geq\valp\big(-(q+1)q^{-(t+5/2)}\big)-\frac{[\F:\F_p]}{2}+\frac{1}{p-1}\\
&\geq\min\left\{\val_p(-q^{-(t+3/2)}),\val_p(q^{-(t+5/2)})\right\}\\
&\quad-\frac{[\F:\F_p]}{2}+\frac{1}{p-1}\\
&=-[\F:\F_p](t+5/2)-\frac{[\F:\F_p]}{2}+\frac{1}{p-1}\\
&=-(t+3)[\F:\F_p]+\frac{1}{p-1}.
\end{align*}
In the remaining case, \eqref{twistK}, we have $\val_p(\ctl)=\infty$. Collecting all these cases, we have that \[\val_p(c_{t,l}(\chi))\geq
\begin{cases}
-[\F:\F_p]+\frac{2}{p-1}, \ \text{if} \ \chi=\1,t=-2,l=1,\\
-[\F:\F_p]+\frac{3}{p-1}, \ \text{if} \ \chi\notin\{\1,\mu^{-1}\},t=-2,l=1,\\
-[\F:\F_p]+\frac{1}{p-1}, \ \text{if} \ \chi=\mu^{-1}, t=-2,l=1,\\
-(t+3)[\F:\F_p]+\frac{1}{p-1}, \ \text{if} \ \chi=\mu^{-1}, t\geq-1,l=1,\\
\infty, \ \text{otherwise.}
\end{cases}
\] We can see here that the minimum of all these cases is given by \[-(t+3)[\F:\F_p]+\frac{1}{p-1},\] thus, \[\valp(W_\pi(\gtlv))\geq-(t+3)[\F:\F_p]+\frac{1}{p-1}.\] If we now consider $c(\pi)>2$ then $c(\mu)>1$. There is just one case when $\chi=\1$, which is \eqref{twistC}. Since $c(\mu)>1$, using \eqref{l1} gives \[\valp(c_{-2,1}(\1))=\valp\left(-\frac{1}{q-1}\eps(1/2,\mu^{-1})^2\right)=0.\] This deals with when $\chi=\1$, therefore we can now assume that $\chi\neq\1$. So, we either have $c(\chi)=1$ or $c(\chi)>1$. If we start with $c(\chi)=1$, then this can only occur in the setting of \eqref{twistH}, $t=-2c(\chi\mu)$ and $\chi\neq\mu^{-1}$. We first deal with the case of $c(\chi\mu)=1$ and so $t=-2$. Here, we have 
\begin{align*}
\valp\left(\frac{q^{1-c(\chi)/2}}{q-1}\eps(1/2,\chi)\eps(1/2,\chi^{-1}\mu^{-1})^2\right)&=\val_p(q^{1/2})+\val_p(\eps(1/2,\chi))\\
&\quad+2\valp(\eps(1/2,\chi^{-1}\mu^{-1}))\\
&\geq\frac{[\F:\F_p]}{2}-\frac{[\F:\F_p]}{2}+\frac{1}{p-1}-[\F:\F_p]+\frac{2}{p-1}\\
&=-[\F:\F_p]+\frac{3}{p-1}.
\end{align*}
Now if $c(\chi\mu)>1$, we have \[\valp(\ctl)\geq\frac{[\F:\F_p]}{2}-\frac{[\F:\F_p]}{2}+\frac{1}{p-1}=\frac{1}{p-1}.\] We can now assume that $c(\chi)>1$ and so $\valp(\eps(1/2,\chi))=0$. Starting with \eqref{twistH}, $t=-2c(\chi\mu)$ with $c(\chi\mu)=1$ and $\chi\neq\mu^{-1}$, we have 
\begin{align*}
\valp\left(\frac{q^{1-c(\chi)/2}}{q-1}\eps(1/2,\chi)\eps(1/2,\chi^{-1}\mu^{-1})^2\right)&=\val_p(q^{1-c(\chi)/2})+2\valp(\eps(1/2,\chi^{-1}\mu^{-1}))\\
&=-\frac{c(\chi)}{2}[\F:\F_p]+\frac{2}{p-1}.
\end{align*}
For $c(\chi\mu)>1$, we have \[\valp(\ctl)=\left(1-\frac{c(\chi)}{2}\right)[\F:\F_p].\] In the case of \eqref{twistI}, $t=-2$ and $\chi=\mu^{-1}$, we have \[\val_p\left(\frac{q^{-c(\mu^{-1})/2}}{q-1}\eps(1/2,\mu^{-1})\right)=\val_p(q^{-c(\mu^{-1})/2})=-\frac{c(\mu^{-1})}{2}[\F:\F_p].\] The final non-trivial case is that of \eqref{twistJ}, $\chi=\mu^{-1}$ and $t>-2$, where we have the following,
\begin{align*}
\val_p(\ctl)&=\val_p\big(-(q+1)q^{-2-t-c(\mu^{-1})/2}\eps(1/2,\mu^{-1})\big)\\
&=\val_p\big(q^{-1-t-c(\mu^{-1})/2}\eps(1/2,\mu^{-1})+q^{-2-t-c(\mu^{-1})/2}\eps(1/2,\mu^{-1})\big)\\
&\geq\min\left\{\val_p\big(q^{-1-t-c(\mu^{-1})/2}\eps(1/2,\mu^{-1})\big),\val_p\big(q^{-2-t-c(\mu^{-1})/2}\eps(1/2,\mu^{-1})\big)\right\}\\
&=\val_p\left(q^{-2-t-c(\mu^{-1})/2}\eps(1/2,\mu^{-1})\right)\\
&=\val_p\left(q^{-2-t-c(\mu^{-1})/2}\right), \ \text{by using \eqref{l1}}\\
&=-\left(2+t+\frac{c(\mu^{-1})}{2}\right)[\F:\F_p].
\end{align*}
By collecting all these cases together we have that
\[\valp(\ctl)\geq
\begin{cases}
0, \ \text{if} \ \chi=\1, t=-2c(\mu),\\
-[\F:\F_p]+\frac{3}{p-1},  \ \text{if} \ \chi\neq\mu^{-1}, c(\chi\mu)=1, t=-2, c(\chi)=1, \\
\frac{1}{p-1}, \ \text{if} \ \chi\neq\mu^{-1}, c(\chi\mu)>1, t=-2c(\chi\mu), c(\chi)=1,\\
-\frac{c(\chi)}{2}[\F:\F_p]+\frac{2}{p-1}, \ \text{if} \ \chi\neq\mu^{-1}, c(\chi\mu)=1, t=-2, c(\chi)>1, \\
\left(1-\frac{c(\chi)}{2}\right)[\F:\F_p], \ \text{if} \ \chi\neq\mu^{-1}, c(\chi\mu)>1, t=-2c(\chi\mu), c(\chi)>1,\\
-\frac{c(\mu^{-1})}{2}[\F:\F_p],  \ \text{if} \ \chi=\mu^{-1}, t=-2, l=c(\mu^{-1}), \\
-\left(2+t+\frac{c(\mu^{-1})}{2}\right)[\F:\F_p],  \ \text{if} \ \chi=\mu^{-1}, t>-2, l=c(\mu^{-1}),\\
\infty, \ \text{otherwise}.
\end{cases}
\] We now need to find the minimum of all of these different possibilities.  We can see that if $t>-2$, then this will be \[-\left(2+t+\frac{c(\mu^{-1})}{2}\right)[\F:\F_p].\] If $t\leq-2$, then we can see that the minimum is \[-\frac{c(\chi)}{2}[\F:\F_p]+\frac{2}{p-1}.\] As $\chi\in\X_{\leq l}$, we have $c(\chi)\leq c(\pi)$. Therefore, \[\valp\left(W_\pi(\gtlv)\right)\geq\min\left\{-\frac{c(\pi)}{2}[\F:\F_p]+\frac{2}{p-1},-\left(2+t+\frac{c(\mu^{-1})}{2}\right)[\F:\F_p]\right\}.\]

\subsubsection{$\pi$ is of Type 3a} 

Here, we evaluate the $p$-adic valuation of \eqref{psC}-\eqref{psG}. Starting with the case $c(\pi)=2$, this means $c(\beta_1),c(\beta_2)=1$. So, using \eqref{l1}, we know \[\valp(\eps(1/2,\beta_1)),\valp(\eps(1/2,\beta_2))\geq-\frac{[\F:\F_p]}{2}+\frac{1}{p-1}.\] Starting with \eqref{psC}, we have \[\valp(c_{-n,1}(\1))=\valp\left(q^{c(\beta_1^{-1})c_1-c(\beta_2^{-1})c_1}\eps(1/2,\beta_1^{-1})\eps(1/2,\beta_2^{-1})\right)\geq-[\F:\F_p]+\frac{2}{p-1}.\] In the second setting, \eqref{psD}, since $\chi\neq\beta_1^{-1},\beta_2^{-1}$ we know that $\chi^{-1}\beta_1^{-1},\chi^{-1}\beta_2^{-1}\neq\1$. Therefore, we have 
\begin{align*}
\valp(\ctl)&=\valp\left(\frac{q^{1/2}}{q-1}q^{c(\chi^{-1}\beta_1^{-1})c_1-c(\chi^{-1}\beta_2^{-1})c_1}\eps(1/2,\chi)\eps(1/2,\chi^{-1}\beta_1^{-1})\eps(1/2,\chi^{-1}\beta_2^{-1})\right)\\
&\geq \frac{[\F:\F_p]}{2}+\valp(q^{c_1-c_1})-\frac{3}{2}[\F:\F_p]+\frac{3}{p-1}\\
&=-[\F:\F_p]+\frac{3}{p-1}.
\end{align*}
Next, we consider the case of \eqref{psE}. Hence, \[\valp(\ctl)=\valp\left(-\frac{q^{c_i+c_j}}{q-1}\eps(1/2,\bi^{-1})\eps(1/2,\bi\bj^{-1})\right)\geq\valp(q^{c_1+c_2})-[\F:\F_p]+\frac{2}{p-1}.\] Finally, we are in the case of \eqref{psF}. In this case, we have
\begin{align*}
\valp(\ctl)&=\valp\left(q^{-t/2-1/2-1/2-c_i}q^{-c_i(t+a(\beta_i^{-1}\beta_j))}\eps(1/2,\bi^{-1})\eps(1/2,\bi\bj^{-1})\right)\\
&\geq-\frac{t+2}{2}[\F:\F_p]-\valp(\qci)-(t+1)\valp(q^{c_i})-[\F:\F_p]+\frac{2}{p-1}\\
&=-\frac{t+4}{2}[\F:\F_p]-(t+2)\valp(\qci)+\frac{2}{p-1}.
\end{align*}
Using the fact that $x\leq|x|$ and $t+2\geq0$, we have \[\valp(\ctl)\geq-\frac{t+4}{2}[\F:\F_p]+\frac{2}{p-1}-(t+2)|\valp(\qci)|.\]
Therefore, by collecting all of these together, 
\[\valp(\ctl)\geq
\begin{cases}
-[\F:\F_p]+\frac{2}{p-1}, \\
\quad \text{if} \ \chi=\1,\beta_1^{-1},\beta_2^{-1}\neq\1, t=-c(\mu_1)-c(\mu_2), l=1,\\
-[\F:\F_p]+\frac{3}{p-1},\\
\quad \text{if} \ \chi\notin\left\{\1,\beta_1^{-1},\beta_2^{-1}\right\}, t=-c(\chi\mu_1)-c(\chi\mu_2), l=c(\chi),\\
-[\F:\F_p]+\frac{2}{p-1}, \\
\quad \text{if} \ \chi=\bi^{-1}, \{j,i\}=\{1,2\}, t=-c(\chi\mu_j)-1, l=c(\beta_i^{-1}), \\
-\frac{t+4}{2}[\F:\F_p]+\frac{2}{p-1}-(t+2)|\valp(\qci)|, \\
\quad \text{if} \ \chi=\bi^{-1}, \{j,i\}=\{1,2\}, t\geq-c(\chi\mu_j), l=c(\beta_i^{-1}),\\
\infty, \ \text{otherwise}.
\end{cases}
\] 
We now need to compute the minimum of all these cases as $\chi\in\X_{\leq l}$ varies. Here, we can see that we have a minimum for all values of $t$ is given by \[-\frac{t+4}{2}[\F:\F_p]+\frac{2}{p-1}-(t+2)|\valp(\qci)|.\] Thus, \[\valp(W_\pi(\gtlv))\geq-\frac{t+4}{2}[\F:\F_p]+\frac{2}{p-1}-(t+2)|\valp(\qci)|.\] Now, we need to consider the case of $c(\pi)>2$. Recall that $c(\pi)=c(\beta_1)+c(\beta_2)$. Since $c(\pi)>2$, we know that at least one of $c(\beta_1)$ and $c(\beta_2)$ have to be greater than one. We first consider the slightly simpler case when both $c(\bone)$ and $c(\btwo)$ are greater than one. This means we have $\valp(\eps(1/2,\bone))=\valp(\eps(1/2,\btwo))=0$.  

In our first case, \eqref{psC}, we obtain 
\begin{align*}
\valp(c_{-n,1}(\1))=c(\bone^{-1})\valp(\qcone)-c(\btwo^{-1})\valp(\qctwo)&=(c(\bone^{-1})-c(\btwo^{-1}))\valp(\qci)\\
&\geq-|c(\bone^{-1})-c(\btwo^{-1})| \cdot |\valp(\qci)|.
\end{align*}
In \eqref{psD} there are two cases to consider these being $c(\chi)=l=1$ or $l>1$. In either of these cases we know that $\chi^{-1}\bone^{-1}\neq\1$ and $\chi^{-1}\btwo^{-1}\neq\1$ and so we can have $c(\chi^{-1}\bone^{-1})=1=c(\chi^{-1}\btwo^{-1})$. So we must compute \[\valp(\ctl)=\valp\left(\frac{q^{1-c(\chi)/2}}{q-1}q^{c(\chi^{-1}\beta_1^{-1})c_1-c(\chi^{-1}\beta_2^{-1})c_1}\eps(1/2,\chi)\eps(1/2,\chi^{-1}\beta_1^{-1})\eps(1/2,\chi^{-1}\beta_2^{-1})\right).\] Starting with $l=1$, we have \[\valp(\ctl)\geq-[\F:\F_p]+\frac{3}{p-1}.\] If we now consider the case of $l>1$, we obtain \[\valp(\ctl)\geq-\frac{l}{2}[\F:\F_p]+\frac{2}{p-1}.\] Now, suppose $c(\chi\beta_1^{-1}),c(\chi\beta_2^{-1})>1$. In addition suppose $c(\chi)=1$, then 
\begin{align*}
\valp(\ctl)&\geq\frac{[\F:\F_p]}{2}+\valp\left(q^{c(\chi^{-1}\beta_1^{-1})c_1-c(\chi\beta_2^{-1})c_1}\right)-\frac{[\F:\F_p]}{2}+\frac{1}{p-1}\\
&=\left(c(\chi^{-1}\beta_1^{-1})-c(\chi^{-1}\beta_2^{-1})\right)\valp(\qcone)+\frac{1}{p-1}\\
&\geq-\left|c(\chi^{-1}\beta_1^{-1})-c(\chi^{-1}\beta_2^{-1})\right| \cdot |\valp(\qcone)|+\frac{1}{p-1}.
\end{align*}
On the other hand if $c(\chi)>1$, gives 
\begin{align*}
\valp(\ctl)&=\left(1-\frac{c(\chi)}{2}\right)[\F:\F_p]+\left(c(\chi^{-1}\beta_1^{-1})-c(\chi^{-1}\beta_2^{-1})\right)\valp(\qcone)\\
&\geq\left(1-\frac{c(\chi)}{2}\right)[\F:\F_p]-\left|c(\chi^{-1}\beta_1^{-1})-c(\chi^{-1}\beta_2^{-1})\right||\valp(\qcone)|.
\end{align*}
In the setting of \eqref{psE}, we can take $c(\bi\bj^{-1})=1$. So, computing $p$-adic valuation gives 
\begin{align*}
\valp(\ctl)&=\valp\left(-\frac{q^{(1-c(\bi^{-1}))/2}}{q-1}q^{c_i-c(\bi\bj^{-1})c_i}\eps(1/2,\bi^{-1})\eps(1/2,\bi\bj^{-1})\right)\\
&\geq-\frac{l}{2}[\F:\F_p]+\frac{1}{p-1}.
\end{align*}
In the final non-trivial case, \eqref{psF}, we again want to take $c(\bi\bj^{-1})=1$. Therefore, 
\begin{align*}
\valp(\ctl)&=\valp\Big(q^{-t/2-c(\bi^{-1})/2-c(\bi^{-1}\bj)/2-c_i}q^{-c_i(t+c(\beta_i^{-1}\beta_j))}\eps(1/2,\bi^{-1})\eps(1/2,\bi\bj^{-1})\Big)\\
&=-\frac{t+l+1+c(\bi^{-1}\bj)}{2}[\F:\F_p]-\valp(\qci)-(t+c(\bi^{-1}\bj))\valp(\qci)+\frac{1}{p-1}\\
&\geq-\frac{t+l+1+c(\bi^{-1}\bj)}{2}[\F:\F_p]+\frac{1}{p-1}-(t+c(\bi^{-1}\bj)+1)|\valp(\qci)|.
\end{align*}
Thus, for $\pi$ of Type 3a with $c(\pi)>2$ and $c(\bone),c(\btwo)>1$, we have 
\[\valp(\ctl)\geq
\begin{cases}
-|c(\bone^{-1})-c(\btwo^{-1})|\cdot|\valp(\qci)|, \\
\quad \text{if} \ \chi=\1,\beta_1^{-1},\beta_2^{-1}\neq\1, t=-c(\mu_1)-c(\mu_2), l=1,\\
-[\F:\F_p]+\frac{3}{p-1}, \\
\quad \text{if} \ \chi\not\in\{\1,\beta_1^{-1},\beta_2^{-1}\}, c(\chi\mu_1),c(\chi\mu_2)=1, t=-2, l=c(\chi)=1,\\
-\left|c(\chi^{-1}\beta_1^{-1})-c(\chi^{-1}\beta_2^{-1})\right| \cdot |\valp(\qcone)|+\frac{1}{p-1},\\
\quad \text{if} \ \chi\not\in\{\1,\beta_1^{-1},\beta_2^{-1}\}, c(\chi\mu_1),c(\chi\mu_2)>1, t=-c(\chi\mu_1)-c(\chi\mu_2), l=c(\chi)=1,\\
-\frac{c(\chi)}{2}[\F:\F_p]+\frac{2}{p-1}, \\
\quad \text{if} \ \chi\not\in\{\1,\beta_1^{-1},\beta_2^{-1}\}, c(\chi\mu_1),c(\chi\mu_2)=1, t=-2, l=c(\chi)>1,\\
\left(1-\frac{c(\chi)}{2}\right)[\F:\F_p]-\left|c(\chi^{-1}\beta_1^{-1})-c(\chi^{-1}\beta_2^{-1})\right|\cdot|\valp(\qcone)|,\\
\quad \text{if} \ \chi\not\in\{\1,\beta_1^{-1},\beta_2^{-1}\}, c(\chi\mu_1),c(\chi\mu_2)>1, t=-c(\chi\mu_1)-c(\chi\mu_2), l=c(\chi)>1,\\
-\frac{l}{2}[\F:\F_p]+\frac{1}{p-1},\\
\quad \text{if} \ \chi=\bi^{-1}, \{j,i\}=\{1,2\}, t=-c(\chi\mu_j)-1, l=c(\beta_i^{-1}), \\
-\frac{t+l+1+c(\bi^{-1}\bj)}{2}[\F:\F_p]+\frac{1}{p-1}-(t+c(\bi^{-1}\bj)+1)|\valp(\qci)|, \\
\quad \text{if} \ \chi=\bi^{-1}, \{j,i\}=\{1,2\}, t\geq-c(\chi\mu_j), l=c(\beta_i^{-1}),\\
\infty, \ \text{otherwise}.
\end{cases}
\] We now need to compute the minimum of all these cases for $\chi$ varying in $\X_{\leq l}$. For $t\geq-c(\chi\bj)$, we see that the minimum of all the possibilities above is 
\begin{equation}\label{bothgreat}
-\frac{t+c(\bone^{-1})+1+c(\bi^{-1}\bj)}{2}[\F:\F_p]+\frac{1}{p-1}-(t+c(\bi^{-1}\bj)+1)|\valp(\qci)|.
\end{equation}
For $t<-c(\chi\bj)$ the bound in \eqref{bothgreat} might no longer be the minimum. Note, we can bound $-|c(\chi^{-1}\bone^{-1})-c(\chi^{-1}\btwo^{-1})|$ by $-c(\pi)$. We are also able to bound $c(\chi)$ by $c(\pi)$. This means that there are two possible cases for the minimum for $\chi\in\X_{\leq l}$ with $t<-c(\chi\bj)$, \[\left(1-\frac{c(\pi)}{2}\right)[\F:\F_p]-c(\pi)|\valp(\qcone)| \ \text{and} \ -\frac{c(\pi)}{2}[\F:\F_p]+\frac{1}{p-1}.\] This can be reformulated as \[-\frac{c(\pi)}{2}[\F:\F_p]+\min\left\{\frac{1}{p-1},[\F:\F_p]-c(\pi)|\valp(\qcone)|\right\}.\] So, 
\begin{align*}
\min_{\chi\in\X_{\leq l}}\{\valp(\ctl)\}=\min\Bigg\{&-\frac{c(\pi)}{2}[\F:\F_p]+\min\left\{\frac{1}{p-1},[\F:\F_p]-c(\pi)|\valp(\qcone)|\right\},\\
&-\frac{t+c(\bone^{-1})+1+c(\bi^{-1}\bj)}{2}[\F:\F_p]+\frac{1}{p-1}\\
&-(t+c(\bi^{-1}\bj)+1)|\valp(\qci)|\Bigg\}.
\end{align*}
For the overall bound for $\valp(W_\pi(\gtlv))$ Recall the discussion provided after Proposition \ref{Won}.  Therefore, from this, \eqref{valWon} and Proposition \ref{Won} (iii), we have 
\begin{align*}
\valp(W_\pi(\gtlv))\geq\min\Bigg\{&-\frac{c(\pi)}{2}[\F:\F_p]+\min\left\{\frac{1}{p-1},[\F:\F_p]-c(\pi)|\valp(\qcone)|\right\},\\
&-\frac{t+c(\bone^{-1})+1+c(\bi^{-1}\bj)}{2}[\F:\F_p]+\frac{1}{p-1}\\
&-(t+c(\bi^{-1}\bj)+1)|\valp(\qci)|\Bigg\}\\
&+
\begin{cases}
-|c(\bone^{-1})-c(\btwo^{-1})|\cdot|\valp(\qcone),\ \text{if} \ \valp(\eps(1/2,\pi))>0,\\
0, \ \text{if} \ \valp(\eps(1/2,\pi))\leq0.
\end{cases}
\end{align*}

We now consider the case when one of $c(\bone)$ or $c(\btwo)$ is equal to one. The case of interest is $c(\bi)=1$ and $c(\bj)>1$. We again, work through the cases of \eqref{psC}-\eqref{psF}. In the setting of \eqref{psC}, we have the following two possibilities, either 
\begin{align*}
\valp(c_{-n,1}(\1))&\geq\valp(\qcone)-c(\btwo^{-1})\valp(\qcone)-\frac{[\F:\F_p]}{2}+\frac{1}{p-1}\\
&\geq-\frac{[\F:\F_p]}{2}+\frac{1}{p-1}-(c(\btwo^{-1})-1)|\valp(\qcone)|,
\end{align*}
or 
\begin{align*}
\valp(c_{-n,1}(\1))&\geq c(\bone^{-1})\valp(\qcone)-\valp(\qcone)-\frac{[\F:\F_p]}{2}+\frac{1}{p-1}\\
&\geq-\frac{[\F:\F_p]}{2}+\frac{1}{p-1}-(c(\bone^{-1})-1)|\valp(\qcone)|.
\end{align*}
The case of \eqref{psD} is the same as in the previous setting. In the case of \eqref{psE} we can again set $c(\bi\bj^{-1})=1$. Thus \[\valp(\ctl)\geq-[\F:\F_p]+\frac{2}{p-1}.\] The final non-trivial setting to consider is \eqref{psF}. We have 
\begin{align*}
\valp(\ctl)&=\valp\bigg(q^{-t/2-1/2-c(\bi^{-1}\bj)/2-c_i}q^{-c_i(t+c(\beta_i^{-1}\beta_j))}\eps(1/2,\bi^{-1})\eps(1/2,\bi\bj^{-1})\bigg)\\
&\geq-\frac{t+1+c(\bi^{-1}\bj)}{2}[\F:\F_p]-\valp(\qci)-(t+c(\bi^{-1}\bj))\valp(\qci)\\
&\quad -[\F:\F_p]+\frac{2}{p-1}\\
&=-\frac{t+3+c(\bi^{-1}\bj)}{2}[\F:\F_p]-(t+c(\bi^{-1}\bj)+1)\valp(\qci)+\frac{2}{p-1}\\
&\geq-\frac{t+3+c(\bi^{-1}\bj)}{2}[\F:\F_p]+\frac{2}{p-1}-(t+c(\bi^{-1}\bj)+1)|\valp(\qci)|.
\end{align*}
Collecting all these together gives
\[\valp(\ctl)\geq
\begin{cases}
\frac{[\F:\F_p]}{2}+\frac{1}{p-1}-(c(\btwo^{-1})-1)|\valp(\qcone)|,\\
\quad \text{if} \ \chi=\1,\beta_1^{-1},\beta_2^{-1}\neq\1, t=-c(\mu_1)-c(\mu_2), c(\bone)=l=1,\\
-\frac{[\F:\F_p]}{2}+\frac{1}{p-1}-(c(\bone^{-1})-1)|\valp(\qcone)|, \\
\quad \text{if} \ \chi=\1,\beta_1^{-1},\beta_2^{-1}\neq\1, t=-c(\mu_1)-c(\mu_2), c(\btwo)=l=1,\\
-[\F:\F_p]+\frac{3}{p-1}, \\
\quad \text{if} \ \chi\neq\1,\beta_1^{-1},\beta_2^{-1}, t=-c(\chi\mu_1)-c(\chi\mu_2), l=c(\chi)=1,\\
-\frac{l}{2}[\F:\F_p]+\frac{2}{p-1}, \\
\quad \text{if} \ \chi\neq\1,\beta_1^{-1},\beta_2^{-1}, t=-c(\chi\mu_1)-c(\chi\mu_2), l=c(\chi)>1,\\
-[\F:\F_p]+\frac{2}{p-1}, \\
\quad \text{if} \ \chi=\bi^{-1}, \{j,i\}=\{1,2\}, t=-c(\chi\mu_j)-1, l=c(\beta_i^{-1}), c(\bi)=1, \\
-\frac{t+3+c(\bi^{-1}\bj)}{2}[\F:\F_p]+\frac{2}{p-1}-(t+c(\bi^{-1}\bj)+1)|\valp(\qci)|, \\
\quad \text{if} \ \chi=\bi^{-1}, \{j,i\}=\{1,2\}, t\geq-c(\chi\mu_j), l=c(\beta_i^{-1})=1,\\
\infty, \ \text{otherwise}.
\end{cases}
\] For $t>-c(\chi\bj)$, we have the minimum of all of the above being \[-\frac{t+3+c(\bi^{-1}\bj)}{2}[\F:\F_p]+\frac{2}{p-1}-(t+c(\bi^{-1}\bj)+1)|\valp(\qci)|.\] If $t\leq-c(\chi\bj)$, then there are several different possibilities.  Note, $c(\bone)-1, c(\btwo)-1=c(\pi)-2$ and $c(\chi)\leq c(\pi)$. So the possible minimums are given by \[-\frac{c(\pi)}{2}[\F:\F_p]+\frac{2}{p-1} \ \text{or} \ -\frac{[\F:\F_p]}{2}+\frac{1}{p-1}-(c(\pi)-2)|\valp(\qcone)|.\] As in the previous setting of Type 3a with $c(\pi)>2$, we need to also include the $p$-adic valuation for $\eps(1/2,\pi)$. Hence, 
\begin{align*}
\valp\left(W_\pi(\gtlv)\right)\geq\min\Bigg\{&-\frac{c(\pi)}{2}[\F:\F_p]+\frac{2}{p-1}, -\frac{[\F:\F_p]}{2}+\frac{1}{p-1}-(c(\pi)-2)|\valp(\qcone)|,\\
&-\frac{t+3+c(\bi^{-1}\bj)}{2}[\F:\F_p]+\frac{2}{p-1}-(t+c(\bi^{-1}\bj)+1)|\valp(\qci)|\Bigg\}\\
&+
\begin{cases}
-(c(\pi)-2)|\valp(\qcone)|-\frac{[\F:\F_p]}{2}, \ \text{if} \ \valp(\eps(1/2,\pi))>0,\\
0, \ \text{if} \ \valp(\eps(1/2,\pi))\leq0.
\end{cases}
\end{align*}

\subsubsection{$\pi$ is of Type 3b}

Here we compute the $p$-adic valuations for \eqref{psH}-\eqref{psM}. Recall that in this case, we have $c(\pi)=2c(\beta)$. Therefore for $c(\pi)>2$, we have $c(\beta)>1$ and so $\val_p(\eps(1/2,\beta))=0$. We start with the case of $c(\pi)=2$, so $c(\beta)=1$. Starting with \eqref{psH}, we have \[c_{-2,1}(1)\geq-[\F:\F_p]+\frac{2}{p-1}.\] The second case to consider is \eqref{psI}. Here, we have $\chi\neq\1,\beta^{-1}$. Since, $\chi\in\X_{\leq l}$ we have $c(\chi)=1$. Therefore, 
\begin{align*}
\val_p(c_{t,1}(\chi))&=\val_p\left(\frac{q^{1-c(\chi)/2}}{q-1}\eps(1/2,\chi)\eps(1/2,\chi^{-1}\beta^{-1})^2\right)\\
&=\val_p\left(\frac{q^{1/2}}{q-1}\eps(1/2,\chi)\eps(1/2,\chi^{-1}\beta^{-1})^2\right)\\
&\geq\frac{[\F:\F_p]}{2}-\frac{3}{2}[\F:\F_p]+\frac{3}{p-1}\\
&=-[\F:\F_p]+\frac{3}{p-1}.
\end{align*}
The next case is that of \eqref{psJ}, where \[\valp\big(c_{-2,1}(\chi)\big)=\valp\left(\frac{q^{-1/2}}{q-1}\eps(1/2,\beta^{-1})\right)\geq-[\F:\F_p]+\frac{1}{p-1}.\] For \eqref{psK}, we have \[c_{-1,1}(\chi)=-q^{-1}\eps(1/2,\beta^{-1})(q^{-c_1}+q^{c_1}).\] Therefore, 
\begin{align*}
\val_p(c_{-1,1}(\chi))&=\valp(q^{-1})+\valp(\eps(1/2,\beta^{-1}))+\valp(\qcone+q^{-c_1})\\
&\geq-[\F:\F_p]-\frac{[\F:\F_p]}{2}+\frac{1}{p-1}+\min\left\{-\valp(\qcone),\valp(\qcone)\right\}\\
&=-\frac{3}{2}[\F:\F_p]+\frac{1}{p-1}+\min\left\{-\valp(\qcone),\valp(\qcone)\right\}\\
&\geq-\frac{3}{2}[\F:\F_p]+\frac{1}{p-1}-|\valp(\qcone)|.
\end{align*}
Finally for \eqref{psL}, we have \[c_{t,1}(\chi)=-q^{-(3+t)/2}\eps(1/2,\beta^{-1})\Big(q^{-c_1(t+2)}+q^{c_1(t+2)}-(q-1)\sum^t_{k=0}q^{c_1(t-2k)}\Big).\] Therefore, we obtain
\begin{align*}
\valp(c_{t,1}(\chi))&=\valp\Big(-q^{-(3+t)/2}\eps(1/2,\beta^{-1})\Big(q^{-c_1(t+2)}+q^{c_1(t+2)}-(q-1)\sum^t_{k=0}q^{c_1(t-2k)}\Big)\Big)\\
&\geq-\frac{3+t}{2}[\F:\F_p]-\frac{[\F:\F_p]}{2}+\frac{1}{p-1}+\\
& \qquad \min\Big\{\valp(q^{-c_1(t+2)}),\valp(q^{c_1(t+2)}),\valp\big((q-1)\sum^t_{k=0}q^{c_1(t-2k)}\big)\Big\}\\
&\geq -\frac{4+t}{2}[\F:\F_p]+\frac{1}{p-1}+\\
& \qquad \min\Big\{-(t+2)\valp(\qcone),(t+2)\valp(\qcone),t\valp(\qcone),\\
&\qquad\qquad(t-2)\valp(\qcone),...,-t\valp(\qcone)\Big\}\\
&\geq -\frac{4+t}{2}[\F:\F_p]+\frac{1}{p-1}-(t+2)|\valp(\qcone)|. 
\end{align*}
Combining all of these cases gives
\[\valp(\ctl)\geq
\begin{cases}
-[\F:\F_p]+\frac{2}{p-1}, \\
\quad \text{if} \ \chi=\1, l=1, c(\beta)\neq0, t=-c(\mu_1)-c(\mu_2)=-n,\\
-[\F:\F_p]+\frac{3}{p-1}, \\
\quad \text{if} \ \chi\neq\1,\beta^{-1}, t=-2c(\chi\beta), l=c(\chi), \\
-[\F:\F_p]+\frac{1}{p-1}, \\
\quad \text{if} \ \chi=\beta^{-1}, t=-2, l=c(\beta^{-1}),\\
-\frac{3}{2}[\F:\F_p]+\frac{1}{p-1}-|\valp(\qcone)|,\\
\quad \text{if} \ \chi=\beta^{-1}, t=-1, l=c(\beta^{-1}),\\
 -\frac{4+t}{2}[\F:\F_p]+\frac{1}{p-1}-(t+2)|\valp(\qcone)|, \\
\quad \text{if} \ \chi=\beta^{-1}, t\geq0, l=c(\beta^{-1}), \\
 \infty, \ \text{otherwise}.
\end{cases}
\]
Taking the minimum of all of these gives \[\valp\left(W_\pi(\gtlv)\right)\geq-\frac{4+t}{2}[\F:\F_p]+\frac{1}{p-1}-(t+2)|\valp(\qcone)|.\] If we now turn our attention to the case where $c(\pi)>2$, then recall that $c(\beta)>1$. We now work through the same cases as above. Starting again, with \eqref{psH} we have \[\valp(\ctl)=0.\] For \eqref{psI}, much like Type 3a there are two cases to consider that of $c(\chi)=1$ and $c(\chi)>1$. In the first of these, with $c(\chi\beta)=1$, we obtain \[\valp(\ctl)\geq-[\F:\F_p]+\frac{3}{p-1}=-[\F:\F_p]+\frac{3}{p-1}.\] If $c(\chi\beta)>1$, then \[\valp(\ctl)=\frac{[\F:\F_p]}{2}.\] For the second of these namely, $c(\chi)>1$ and $c(\chi\beta)=1$, we have \[\valp(\ctl)\geq-\frac{l}{2}[\F:\F_p]+\frac{2}{p-1}=-\frac{l}{2}[\F:\F_p]+\frac{2}{p-1}.\] Now, if $c(\chi\beta)>1$, then \[\valp(\ctl)=\left(1-\frac{c(\chi)}{2}\right)[\F:\F_p].\] For the next setting, \eqref{psJ}, we have \[\valp(\ctl)=-\frac{l}{2}[\F:\F_p].\]For \eqref{psK}, we obtain 
\begin{align*}
\valp(\ctl)&=\valp(-q^{-(1+l)/2})+\val(\eps(1/2,\beta^{-1}))+\valp((q^{-c_1}+q^{-c_2}))\\
&\geq-\frac{1+l}{2}[\F:\F_p]+\min\{\valp(q^{-c_1}),\valp(q^{c_1})\}\\
&\geq-\frac{1+l}{2}[\F:\F_p]-|\valp(\qcone)|.
\end{align*}
In the last non-trivial case, \eqref{psL} we obtain 
\begin{align*}
\valp(\ctl)&=\valp\Big(-q^{-(2+t+c(\beta^{-1}))/2}\eps(1/2,\beta^{-1})\Big(q^{-c_1(t+2)}+q^{c_1(t+2)}-(q-1)\sum^t_{k=0}q^{c_1(t-2k)}\Big)\Big)\\
&\geq-\frac{t+l+2}{2}[\F:\F_p]+\\
&\quad \min\Big\{-(t+2)\valp(q^{c_1}),(t+2)\valp(\qcone),t\valp(\qcone),\\
&\qquad\qquad(t-2)\valp(\qcone),...,-t\valp(\qcone)\Big\}\\
&\geq-\frac{t+l+2}{2}[\F:\F_p]-(t+2)|\valp(\qcone)|.
\end{align*}
Hence, 
\[\valp(\ctl)\geq
\begin{cases}
0,\\
\quad \text{if} \ \chi=\1, l=1, c(\beta)\neq0, t=-c(\mu_1)-c(\mu_2)=-n,\\
-[\F:\F_p]+\frac{3}{p-1},\\
\quad \text{if} \ \chi\neq\1,\beta^{-1}, c(\chi\beta)=1, t=-2, l=c(\chi)=1,\\
\frac{[\F:\F_p]}{2},\\
\quad \text{if} \ \chi\neq\1,\beta^{-1}, c(\chi\beta)>1, t=-2c(\chi\beta), l=c(\chi)=1,\\
-\frac{l}{2}[\F:\F_p]+\frac{2}{p-1},\\
\quad \text{if} \ \chi\neq\1,\beta^{-1}, c(\chi\beta)=1, t=-2, l=c(\chi)>1,\\
\left(1-\frac{c(\chi)}{2}\right)[\F:\F_p],\\
\quad \text{if} \ \chi\neq\1,\beta^{-1}, c(\chi\beta)>1, t=-2c(\chi\beta), l=c(\chi)>1,\\
-\frac{l}{2}[\F:\F_p],\\
\quad \text{if} \ \chi=\beta^{-1}, t=-2, l=c(\beta^{-1}),\\
-\frac{1+l}{2}[\F:\F_p]-|\valp(\qcone)|,\\
\quad \text{if} \ \chi=\beta^{-1}, t=-1, l=c(\beta^{-1}),\\
-\frac{t+l+2}{2}[\F:\F_p]-(t+2)|\valp(\qcone)|,\\
\quad \text{if} \ \chi=\beta^{-1}, t\geq0, l=c(\beta^{-1}), \\
 \infty, \ \text{otherwise}.
\end{cases}
\]
We now need to find the minimum of all of these cases. In the situation that $t>-2$, we have \[\valp(W_\pi(\gtlv))\geq-\frac{t+c(\beta^{-1})+2}{2}[\F:\F_p]-(t+2)|\valp(\qcone)|.\]  For $t\leq-2$,  recall $c(\chi)\leq c(\pi)$, we have the bound \[-\frac{c(\pi)}{2}[\F:\F_p]+\frac{2}{p-1}.\] Therefore, for $\pi$ of Type 3b with $c(\pi)>2$, we have \[\valp\left(W_\pi(\gtlv)\right)\geq\min\left\{-\frac{c(\pi)}{2}[\F:\F_p]+\frac{2}{p-1},\frac{t+c(\beta^{-1})+2}{2}[\F:\F_p]-(t+2)|\valp(\qcone)|\right\}.\]

\begin{Remark}
Throughout the proofs of these theorems, we have used the bound $s(\cdot)>1$. So, it might be that the bounds stated in Theorem \ref{api2} \& \ref{apiplus} are not sharp. One would need to study the properties of the multiplicative characters more carefully in each specific case to obtain possible better bounds. 
\end{Remark}

\begin{Remark}
Note in the setting of $c(\pi)>2$, we do not have a uniform bound that works for every value of $t$.  The problem is for negative values of $t$ the bound which works for positive values of $t$ is no longer the minimum one. However, for most values of $t$ which has $W_\pi(\gtlv)\neq0$, the bound for $t>0$ is the minimum of all possible cases.
\end{Remark} 


\chapter{Fourier Coefficients of Hilbert Modular Forms at Cusps}\label{Chap3}
\chaptermark{Hilbert Modular Forms}

\section{Introduction}

Let $f$ be a normalised Hecke eigenform for $\Gamma_0(N)$ of weight $k$. It is known that the field generated by the Fourier coefficients of $f$ is a number field, see \cite[Proposition 2.8]{Shi78}. This number field we denote by $\Q(f)$. Furthermore, for any matrix $\sigma\in\SL_2(\Z)$, an application of the $q$-expansion principle shows that the Fourier coefficients of $f|_k\sigma$ lie in the cyclotomic extension $\Q(f)(\zeta_N)$, see \cite[Remark 12.3.5]{MFandMC} for more details. In recent years, algorithms have been developed to compute these Fourier coefficients. These were developed by several different authors, see \cite{HC,MDMN} for more details about these algorithms. Their methods use the knowledge that these numbers are in fact algebraic numbers. This means if we know more about the number field where these Fourier coefficients lie, this could speed up the computations. 

Therefore the question is given a Hecke eigenform (which we may assume to be a newform) $f$ of level $N$ and weight $k$ and $\sigma\in\SL_2(\Z)$; what is the number field that the Fourier coefficients of $f|_k\sigma$ generate? Or a slightly weaker question is: can one write down an explicit subfield of $\Q(f)(\zeta_N)$, depending on the entries of $\sigma$, which contains all the Fourier coefficients of $f|_k\sigma$? This was answered in a paper of Brunault and Neururer, who proved the following result \cite[Theorem 4.1]{FEatC}.  

\textit{Let $f$ be a normalised newform on $\Gamma_0(N)$ of weight $k$. Let $\Q(f)$ be the field generated by all the Fourier coefficients of $f$. Let $\sigma=(\begin{smallmatrix} a & b \\ c & d \end{smallmatrix})\in\sl2Z$. Then the Fourier coefficients of $f|_k \sigma$ lie in the cyclotomic extension $\Q(f)(\zeta_{N'})$ where $N'=N/(cd,N)$}.

The above result as stated is optimal. By optimal we mean that this is the field generated by the Fourier coefficients $f|_k\sigma$. The method of Brunault and Neururer was classical. They use a result of Shimura \cite[Theorem 8]{Shi75}, which studied the connections between two actions on spaces of modular forms: the action of $\GL_2^+(\Q)$ via the slash-operator and the action of $\Aut(\C)$ on the Fourier coefficient of a modular form. The proof of Brunault and Neururer also applies to modular forms of $\Gamma_0(N)$ that are not necessarily newforms but in that case it is not known if the field is optimal. In this chapter we give a new proof of the result of Brunault and Neururer as well as a substantial generalisation (to the case of Hilbert modular forms) using adelic and local representation-theoretic methods. Specifically we use local Whittaker functions and their invariance properties. The starting point is an explicit formula for the Fourier coefficients of $f|_k\sigma$ given in \cite[Section 3]{ACAS}, in terms of local Whittaker functions. From there we find sufficient conditions for $\tau\in\Aut(\C)$ to fix this product. The advantage of this method is that it gives an insight into how one could prove analogous results for other families of modular forms, specifically it can be generalised to Hilbert modular forms. One would suspect that a similar method can also be used to prove a result of the same style for Whittaker coefficients (or other factorisable periods) of automorphic forms lying in cohomological automorphic representations of higher rank groups.

The main result of this chapter is an extension of Brunault and Neururer stated above to the case of cuspidal Hilbert newforms. We recall some of the objects defined in Section \ref{hilbert}. Let $F$ be a totally real number field of degree $n$ with narrow class group of size $h$. Let $F_+$ denote the set of all totally positive elements in $F$. Let $\n$ be a fixed integral ideal of $\O_F$. The subgroup $\GL_2^+(F)$ of $\GL_2(F)$ is the subgroup consisting of all elements in $\GL_2(F)$ who have a totally positive determinant. For $\mu=1,...,h$, we define the congruence subgroup $\Gamma_\mu(\n)$ of $\GL_2^+(F)$ as \[\congsub=\left\{\begin{pmatrix} a & b \\  c & d \end{pmatrix}: a,d \in\O_F, b\in(t_\mu)^{-1}\mathfrak{D}_F^{-1}, c\in\n t_\mu\mathfrak{D}_F,ad-bc\in\O_F^\times\right\},\] where $\D_F$ is the absolute different of $F$ and $\{t_\mu\}_{\mu=1}^h$ form a complete set of representatives of the narrow class group. 

We identify $\alpha\in\GL_2^+(F)$ with an element of $\GL_2^+(\R)^n$ via the various embeddings of $F$ in $\R$ and for $y=(y_1,...,y_n)\in\C^n$ and $k=(k_1,...,k_n)\in\Z^n$ we use the notation $y^k:=y_1^{k_1}...y_n^{k_n}$. In the case where $y_j=0$ for some $j=1,...,n$ we set $y^k=0$. For an element $a\in F_+$ we write $a^k:=\eta_1(a)^{k_1}...\eta_n(a)^{k_n}$, where $\eta_1,...,\eta_n$ are the embeddings of $F$ into $\C$.  

Then $f:\H^n\rightarrow\C$ is a Hilbert modular form of weight $k=(k_1,...,k_n)$ and level $\congsub$ if $f$ is holomorphic on $\H^n$ and at the cusps and we have \[f||_k\alpha(z):=(\det\alpha)^{k/2}(cz+d)^{-k}f(\alpha\cdot z)=f(z)\] for every $\alpha=\smatrix\in\Gamma_\mu(\n)$; since $\det\alpha$ is totally positive $(\det\alpha)^{k/2}$ is well defined. Any $f$ of the above form has a Fourier expansion of the form \[f(z)=\sum_{\xi\in t_\mu\O_F} a(\xi;f)e^{2\pi i\Tr(\xi z)},\] where $\Tr(\xi z)=\eta_1(\xi)z_1+...+\eta_n(\xi)z_n$. Following Shimura \cite[2.24]{Shi78}, we define for $f$ as above, $1\leq\mu\leq h$, \[c_\mu(\xi;f)=N(t_\mu\O_F)^{-k_0/2}a(\xi;f)\xi^{(k_0\mathbf{1}-k)/2},\] where $\mathbf{1}=(1,...,1)$ and $k_0=\max\{k_1,...,k_n\}$.

We say $f$ is a Hilbert cuspform if the constant term in the Fourier expansion of $f||_k\gamma$ is zero for every $\gamma\in\GL_2^+(F)$. A cuspidal Hilbert newform of weight $k$ and level $\n$ is a tuple $\f=(f_1,...,f_h)$ where $f_\mu$ is a Hilbert cuspform for $\Gamma_\mu(\n)$ and such that $\f$ does not come from a form of lower level and is a Hecke eigenform, see \cite[Section 2]{Shi78} for more details.  For any integral ideal $\mathfrak{m}$ in $F$,  there exists a unique $\mu\in\{1,...,h\}$ and a totally positive element in $F$ so that $\mathfrak{m}=\xi t_\mu^{-1}\O_F$.  Let $c(\mathfrak{m},\f)=a_\mu(\xi)\xi^{-k/2}$ where $a_\mu(\xi)$ are the Fourier coefficients of $f_\mu$,  note that this is well defined since the right hand side of this expression is invariant under the totally positive elements in $\O_F^\times$.  In the case that $\mathfrak{m}$ is not integral we set $c(\mathfrak{m},\f)=0$.  A normalised cuspidal Hilbert newform $\f$ (also called primitive form) is one such that $c(\O_F,\f)=1$.  A result of Shimura \cite[Proposition 2.8]{Shi78}, tells us that the set of all $\{c_\mu(\xi;f_\mu):1\leq\mu\leq h \ \text{and} \ \xi\in F_+\}$ generates a totally real or CM number field, denoted $\Q(\f)$, under the assumption that $k_1\equiv...\equiv k_n \pmod{2}$. We prove the following result.

\begin{IntroThm}[Theorem \ref{Thm}]\label{HMFThm}
Let $\f=(f_1,...,f_h)$ be a normalised Hilbert newform of weight $k=(k_1,...,k_n)$ with $k_1\equiv...\equiv k_n \pmod{2}$ and level $\n$. Let $1\leq\mu\leq h$ and let $\sigma=\smatrix\in\Gamma_\mu(1)$. Let $f_\mu||_k\sigma$ have the Fourier expansion \[f_\mu||_k\sigma(z)=\sum_{\xi}a_\mu(\xi;\sigma)e^{2\pi i \Tr(\xi z)},\] and define \[c_\mu(\xi;f_\mu||_k\sigma)=N(t_\mu\O_F)^{-k_0/2}a_\mu(\xi;\sigma)\xi^{(k_0\mathbf{1}-k)/2}.\] Let $\n'$ be the integral ideal of $\O_F$ such that $\n'(\n+cdt_\mu^{-1}\mathfrak{D}_F^{-1})=\n$. Then $c_\mu(\xi;f_\mu||_k\sigma)$ lie in the number field $\Q(\f)(\zeta_{N_0})$ where $N_0$ is the integer such that $N_0\Z=\n'\cap\Z$.
\end{IntroThm}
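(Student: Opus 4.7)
The strategy is to pass to the adelic setting and reduce the problem to a statement about local Whittaker newforms. Associated to $\f$ there is a cuspidal automorphic representation $\Pi = \otimes_v \Pi_v$ of $\GL_2(\AF)$ with trivial central character, and via the adelic--classical dictionary of Section \ref{gobetween} one can attach to $\f$ an adelic function $\phi_\f$. The first step is to derive, analogously to the formula \eqref{af} in the classical case, an explicit expression of the form
\[
c_\mu(\xi;f_\mu||_k\sigma) \;=\; C(\xi,\sigma,\mu)\,W_\Pi\bigl(g(\xi,\sigma,\mu)\bigr),
\]
where $C(\xi,\sigma,\mu)\in\Q(\f)$ is an arithmetic normaliser and $g(\xi,\sigma,\mu)\in\GL_2(\AF)$ is an explicit matrix depending on $\sigma$, $\xi$, $\mu$ and the chosen representatives $t_\mu$ of the narrow class group. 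Factoring the global Whittaker newform as $W_\Pi(g)=\prod_{v\leq\infty}W_{\Pi_v}(g_v)$ then reduces the rationality question for $c_\mu(\xi;f_\mu||_k\sigma)$ to a local rationality question at each place.

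The second step is to exploit the natural action of $\tau\in\Aut(\C)$ on Whittaker functions. For the local Whittaker newform one has the standard compatibility $\tau\bigl(W_{\Pi_v}(g_v)\bigr) = W_{{}^\tau\Pi_v}(g_v)$, where ${}^\tau\Pi_v$ differs from $\Pi_v$ only through the twist by a root-of-unity character determined by the action of $\tau$ on the values of the additive character $\psi_v$. Since $\f$ is a newform and we restrict to those $\tau$ fixing $\Q(\f)$, the Hecke-theoretic data of $\Pi$ is preserved by $\tau$, and the failure of $\tau$ to fix $W_{\Pi_v}(g_v)$ is therefore governed entirely by the action of $\tau$ on a finite group of roots of unity which depends only on the local matrix $g_v$. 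The goal becomes to pin down precisely which roots of unity appear.

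The final step is the place-by-place analysis. The archimedean factors contribute only a transcendental normalisation which is absorbed into the definition of $c_\mu$. At finite places $v\nmid\n$ the representation $\Pi_v$ is spherical, so $W_{\Pi_v}(g_v)$ is a polynomial in the Satake parameters and already lies in $\Q(\f)$ with no cyclotomic contribution. The content of the theorem therefore sits at the ramified places $v\mid\n$: there the lower entries $c,d$ of $\sigma$ enter the local matrix, and the $K_1(\n_v)$-invariance of $W_{\Pi_v}$ combined with an Iwasawa-type decomposition forces $W_{\Pi_v}(g_v)$ to lie in the cyclotomic extension $\Q(\f)(\zeta_{m_v})$ for an integer $m_v$ controlled by the local component of the ideal $\n+cdt_\mu^{-1}\D_F^{-1}$. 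Taking the compositum of these local contributions across all primes dividing $\n$, and intersecting with $\Z$, yields precisely the integer $N_0$ in the statement. The principal obstacle is carrying out this last step uniformly: one must produce a suitable Iwasawa decomposition of $g_v$ adapted to $K_1(\n_v)$, exhibit the resulting root of unity as an explicit cyclotomic quantity, and verify the bound $m_v\mid N_0$ in each representation-theoretic case from Section \ref{classofpi}, tracking the local central character $\omega_{\Pi_v}$ throughout.
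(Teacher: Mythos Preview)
Your overall architecture (pass to adeles, factor the global Whittaker function, analyse the Galois action locally) matches the paper, but the mechanism you describe in the second and third steps is not the one that actually drives the argument, and the difference matters.

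The key input is not $\tau\bigl(W_{\Pi_v}(g_v)\bigr) = W_{{}^\tau\Pi_v}(g_v)$ with ${}^\tau\Pi_v$ a character twist of $\Pi_v$. The correct statement (Lemma~\ref{key} in the paper) is
\[
\tau\bigl(W_{\Pi_v}(g)\bigr) \;=\; W_{{}^\tau\Pi_v}\bigl(a(\alpha_\tau)\,g\bigr),
\]
where $\alpha_\tau\in\widehat{\Z}^\times$ is the idele attached to $\tau$ via the cyclotomic character. The Galois action does not merely twist the representation; it \emph{shifts the argument} by the diagonal matrix $a(\alpha_\tau)$. Once $\tau$ fixes $\Q(\f)$ one has ${}^\tau\Pi_v\cong\Pi_v$ (up to the $|\cdot|^{k_0/2}$ normalisation), so the entire question becomes whether
\[
W_{\Pi_v}\bigl(a(\alpha_\tau)\,a(\xi)\,\iota_{\mathrm f}(\sigma^{-1})\,x_{\mu,v}\bigr) \;=\; W_{\Pi_v}\bigl(a(\xi)\,\iota_{\mathrm f}(\sigma^{-1})\,x_{\mu,v}\bigr).
\]
This holds precisely when the element $x_{\mu,v}^{-1}\sigma\,a(\alpha_\tau)\,\sigma^{-1}x_{\mu,v}$ lies in $K_v(\n)$, and a direct $2\times 2$ matrix computation shows this is equivalent to $\alpha_\tau\equiv 1$ modulo the $v$-part of $\n'=\n(\n+cdt_\mu^{-1}\D_F^{-1})^{-1}$. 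That congruence is exactly what $\tau$ fixing $\zeta_{N_0}$ gives you.

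So there is no need for an Iwasawa decomposition of $g_v$, no need to compute $W_{\Pi_v}(g_v)$ explicitly, and no case analysis over the representation types of Section~\ref{classofpi}; the argument is uniform over all finite places and never looks inside $\Pi_v$ beyond its $K_v(\n)$-invariance. Your proposed route of exhibiting $W_{\Pi_v}(g_v)$ as an explicit cyclotomic quantity in each case would in principle require the full Assing-type formulas of Chapter~\ref{Chap2} and would be substantially harder to carry out (and to make uniform in $\sigma$). The missing idea is that the cyclotomic control comes from the \emph{conjugation} $\sigma\,a(\alpha_\tau)\,\sigma^{-1}$ landing in the level subgroup, not from the internal structure of the Whittaker values.
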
 

To prove this theorem we study the classical action of $\Aut(\C)$ on the Fourier coefficients via the action of $\Aut(\C)$ on local Whittaker newforms. We give an explicit formula for the Fourier coefficients of a classical Hilbert automorphic form in terms of a global Whittaker function, Lemma \ref{general}. We can then apply this general result to the specific newform in the theorem, Proposition \ref{prop}. This generalises two results of \cite{ACAS}, namely Lemma 3.1 and Proposition 3.2, where the authors proved analogous results for modular forms. The global Whittaker newform can be broken up into a product of local Whittaker newforms. Therefore we can write the Fourier coefficients of $f_\mu||_k\sigma$ in terms of local Whittaker newforms. Precisely, in Proposition \ref{cmuprop}, we show \[c_\mu(\xi;f_\mu||_k\sigma)=N(t_\mu\O_F)^{-k_0/2}\xi^{k_0\mathbf{1}/2}\prod_{v<\infty}\Wv(a(\xi)\iotaf x_{\mu,v}),\] where $\Wv$ is the local Whittaker newform.  Thus studying the action of $\Aut(\C)$ on these Fourier coefficients is equivalent to studying the action of $\Aut(\C)$ on local Whittaker newforms.

In \cite[Lemma 10.5]{Shi00} Shimura provides a possible different approach to proving Theorem \ref{HMFThm}. Shimura proves a compatibility result between the Galois action and modular action for arbitrary (meromorphic, vector-valued) Siegel modular forms. The heart of our approach is Lemma \ref{key} which is a local result about local Whittaker newforms. Shimura's result is inherently global and relies on the reciprocity-law at CM-points and rationality of automorphic forms, while our method exploits the rational structure of the local Whittaker model. 

\section{Background setting and notation}

Here we discuss the background setting and collect the notation which will be used throughout this chapter.

\subsection{Background notation}\label{action}

Recall that $\GL_2^+(\R)$ acts on $\H$ via the action $\smatrix z=\frac{az+b}{cz+d}$. For a representation $\Pi$ we define the representation ${}^{\tau}\Pi$ as follows. Let $\tau$ be an automorphism of $\C$, then let $V$ be the space of $\Pi$ and let $V'$ be any vector space such that $t:V\rightarrow V'$ is a $\tau$-linear isomorphism. That is, \[t(v_1+v_2)=t(v_1)+t(v_2) \quad \text{and} \quad t(\lambda v)=\tau(\lambda)t(v).\] We define the representation $({}^{\tau}\Pi,V')$ via ${}^{\tau}\Pi(g)=t\circ\Pi(g)\circ t^{-1}$.

\subsection{Number fields}

Let $F$ be a totally real number field of degree $n$, with narrow class group of size $h$. We denote the ring of integers of $F$ by $\O_F$. The embeddings of $F$ in $\C$ we write as $\eta_1,...,\eta_n$. With respect to these embeddings we have that $F$ naturally embeds into $\R^n$. For an element $\alpha$ of $F$ write $(\alpha_1,...,\alpha_n)$ for $(\eta_1(\alpha),...,\eta_n(\alpha))$ as an element of $\R^n$. We write $\alpha^k=\prod_{j=1}^n\alpha_j^{k_j}$ for $k=(k_1,...,k_n)\in\Z^n$. For a subset $S$ of $F$, $S_+$ denotes the totally positive elements of $S$. Throughout $\n$ will be a fixed integral ideal of $F$. We denote the trace map of $F$ to $\Q$ by $\Tr$. Let $\D_F$ denote the absolute different of $F$, that is, $\D_F^{-1}=\{x\in F: \Tr(x\O_F)\subset\Z\}$. Let $F_v$ be the completion of $F$ at a non-archimedean place $v$. The ring of integers of $F_v$ are denoted by $\O_v$. We denote the maximal ideal of $\O_v$ by $\p_v$ and a generator of $\p_v$ by $\varpi_v$. We write $\n_v=\n\otimes_{\O_F}\O_v$ and $\mathfrak{D}_v=\mathfrak{D}_F\otimes_{\O_F}\O_v$ to be the $v$-part of $\n$ and $\D_F$ respectively. For an ideal $\n_v$ of $\O_v$ we define the subgroup $K_v(\n_v)$ of $\GL_2(F_v)$ as
\begin{equation}\label{Kv}
K_v(\n_v):=\left\{\begin{pmatrix} a & b\\ c& d \end{pmatrix}\in\GL_2(F_v): \begin{aligned} & a\O_v+\n_v=\O_v, \ b\in\mathfrak{D}_v^{-1} \\ & c\in\n_v\mathfrak{D}_v, \ d\in\O_v \end{aligned}, ad-bc\in\O_F^\times\right\}.
\end{equation}
By abuse of notation, given an ideal $\n$ of $\O_F$ use $K_v(\n)$ to denote $K_v(\n_v)$ where $\n_v=\n\otimes_{\O_F}\O_v$ is the $v$-part of $\n$. We define $n_v$ and $d_v$ via $\n_v=\varpi_v^{n_v}\O_v$ and $\D_v=\varpi_v^{d_v}\O_v$ respectively. 

\subsection{Ring of adeles of $F$}

Similar to the ring of adeles over $\Q$ we can define the ring of adeles of $F$. The ring of adeles of $F$ is denoted by $\A_F$. In the specific case that $F=\Q$ we denote $\A_\Q=\A$. The narrow class group can be viewed as \[F^\times\bs\A_F^\times/F_{\infty,+}^\times\prod\O_v^\times,\] similar to the result of Theorem \ref{SA}. We fix elements, for $1\leq\mu\leq h$, $t_\mu\in\A_F^\times$ such that \[
(t_\mu)_v
\begin{cases}
=1, \ \text{if}\  v\in\infty \ \text{or} \  v | \n \\
\in\O_v, \ \text{if} \ v\nmid\infty \  \text{and} \ v\nmid\n
\end{cases}
\] and $\{t_\mu\}_{\mu=1}^h$ are representatives of the narrow class group. We let $(t_\mu\O_F)$ denote the ideal of $\O_F$ corresponding to $t_\mu$. We have the following disjoint union decomposition \[\A_F^\times=\bigcup_{1\leq\mu\leq h} t_\mu F^\times F^\times_{\infty,+}\prod_{v<\infty}\O_v^\times.\] We denote $x_\mu=(\begin{smallmatrix}1 & \\ & t_\mu\end{smallmatrix})$ and for a finite place $v$ we denote $x_{\mu,v}=(\begin{smallmatrix}1 & \\ & (t_\mu)_v\end{smallmatrix})$. For $g\in\GL_2(\A_F)$ we define $\iota_{\text{f}}(g)$ to equal $g$ at all finite places and the identity at infinite places.

\subsection{Measures}

We normalise the measures so that Vol$(\O_v)=1$ and in a multiplicative setting Vol$(\O_v^\times)=1$. We also normalise so that Vol$(F\bs\AF)=1$.

\subsection{Additive characters}

Let $\psi_\Q:\Q\bs\A\rightarrow\C^\times$ be an additive character defined by $\psi_\Q=\prod_p\psi_{\Q,p}$ where $\psi_{\Q,\infty}(x)=e(x)$ for $x\in\R$ and $\psi_{\Q,p}(x)=1$ for $x\in\Z_p$. We then define an additive character $\psi$ on $F\bs\A_F$ by composing $\psi_\Q$ with the trace map from $F$ to $\Q$, that is, $\psi=\psi_\Q\circ\Tr:F\bs\A_F\rightarrow\C^\times$. If $\psi=\otimes\psi_v$ then the local characters are defined analogously and we have $\psi_\infty(x)=e(\Tr(x))$. Using \cite[Lemma 2.3.1]{RS} we have a formula for computing the value of the exponent of the character $\psi_v$ where $v$ is a non-archimedean place of $F$. For a place $v$ of $F$, letting $c(\psi_v)$ denote the smallest integer $c_v$ such that $\psi_v$ is trivial on $\p_v^{c_v}$, we have by \cite[Lemma 2.3.1]{RS}
\begin{equation}\label{formula}
c(\psi_v)=d_v,
\end{equation} 
where $d_v$ is defined as above, that is, via $\D_v=\varpi_v^{d_v}\O_v$.

\section{The Whittaker model}

Recall that each infinite dimensional admissible representation of $\GL_2$ over a local field admits a unique Whittaker model see \cite[Chapter 3]{Bump} and \cite[Chapter 4]{DGJH}. Furthermore, each cuspidal automorphic representation of $\GL_2$ admits a unique global Whittaker model. As we are now working over $F$ and not $\Q$ we give below the definition and basic properties of the Whittaker model for representation of trivial central character.

For any place $v$ of $F$ let $\Pi_v$ be an irreducible admissible infinite dimensional representation of $\GL_2(F_v)$. Let $\mathcal{W}(\psi_v)$ be the space of smooth functions $W_{\Pi_v}:\GL_2(F_v)\rightarrow\C$ satisfying \[W_{\Pi_v}\left(\begin{pmatrix} 1 & x \\ & 1 \end{pmatrix} g\right)=\psi_v(x)W_{\Pi_v}(g), \ \text{for} \ x\in F_v \ \text{and} \ g\in\GL_2(F_v).\] The space $\mathcal{W}(\psi_v)$ is a representation space of $\GL_2(F_v)$ with the action given by right-translation. The local Whittaker model, denoted $\mathcal{W}(\Pi_v,\psi_v)$, is the unique subrepresentation of $\mathcal{W}(\psi_v)$ that is equivalent to the representation $\Pi_v$.

Let $(\Pi,V_\Pi)$ be a cuspidal automorphic representation of $\GL_2(\A_F)$. Then the global Whittaker model, $\mathcal{W}(\Pi,\psi)$, for $\Pi$ with respect to a fixed non-trivial additive character $\psi$, consists of the space generated by functions $W_\phi$ on $\GL_2(\AF)$ given by 
\begin{equation}\label{Def}
W_\phi(g):=\int_{\A_F/F}\phi\left(\begin{pmatrix} 1 & x \\ & 1 \end{pmatrix}g\right)\overline{\psi(x)}\ dx,
\end{equation}
as $\phi$ varies in $V_\Pi$. The representation $\mathcal{W}(\Pi,\psi)$ decomposes as a restricted tensor product of local Whittaker models, $\mathcal{W}(\Pi_v,\psi_v)$.

For $\Pi_v$ an irreducible admissible infinite dimensional representation of $\GL_2(F_v)$, with unramified central character, let $c(\Pi_v)$ be the smallest integer $n$ such that $\Pi_v$ has a $K_v(\p_v^n)$-fixed vector. Then the normalised Whittaker newform (with respect to $\psi_v$) is the unique $K_v(\p_v^{c(\Pi_v)})$-invariant vector $\Wv\in\mathcal{W}(\Pi_v,\psi_v)$ satisfying $\Wv(1)=1$.

We now state a key result which will be used throughout this chapter.

\begin{Lemma}\label{key}
Let $\Pi_v$ be an irreducible admissible infinite dimensional representation of $\GL_2(F_v)$ with unramified central character. Let $K_v(\p_v^{c(\Pi_v)})$ be defined as in \eqref{Kv}. Let $\Wv$ be the local normalised Whittaker newform. Let $\tau\in\Aut(\C)$ and $g\in\GL_2(F_v)$. Then we have $\tau(\Wv(g))=W_{{}^{\tau}\Pi_v}(a(\alpha_\tau)g)$ where $\alpha_\tau$ is defined via 
\begin{align*}
\Aut(&\C/\Q) &&\rightarrow && \Gal(\overline{\Q}/\Q) &&\rightarrow &&  \Gal(\Q(\mu_\infty)/\Q)&&\rightarrow  &&\hat{\Z}^\times\cong\Pi_v\Z_v^\times &&\subset && \Pi_v\Pi_{\p|v}\O_\p^\times\\
&\tau &&\mapsto &&  \qquad\tau|_{\overline{\Q}} && \mapsto  &&\qquad\tau|_{\Q(\mu_\infty)} && \mapsto  &&\qquad\alpha_\tau &&\mapsto &&\alpha_\tau=(\alpha_{\tau,\p})_\p.
\end{align*}
\end{Lemma}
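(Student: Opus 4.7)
The plan is to exploit the uniqueness characterization of the Whittaker newform: it is the unique element of $\W(\Pi_v,\psi_v)$ that is right-invariant under $K_v(\p_v^{c(\Pi_v)})$ and equals $1$ at the identity. The strategy has three steps: (1) identify the character with respect to which $g\mapsto \tau(\Wv(g))$ is a Whittaker function; (2) recognize this function as the newform in the appropriate twisted Whittaker model; (3) relate the twisted model back to the standard one by a change-of-character intertwiner to land on $W_{{}^\tau\Pi_v}(a(\alpha_\tau)g)$.

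First I would verify the key local identity
\[\tau(\psi_v(x)) = \psi_v(\alpha_{\tau,v}\, x), \qquad x\in F_v,\]
where $v$ lies over the rational prime $p$ and $\alpha_{\tau,v}$ denotes the image of $\alpha_{\tau,p}\in\Z_p^\times$ under the inclusion $\Z_p\hookrightarrow\O_v$. Because $\psi_v = \psi_{\Q,p}\circ \Tr_{F_v/\Q_p}$ takes values in $\mu_{p^\infty}$ and $\tau$ acts on these roots of unity via the cyclotomic character as $\zeta\mapsto \zeta^{\alpha_{\tau,p}}$, the $\Z_p$-linearity of the trace reduces this to the corresponding identity for $\psi_{\Q,p}$, which is just the compatibility of the canonical character with multiplication by $\Z_p$-units on its input.

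Next I would set $W^\tau(g) := \tau(\Wv(g))$. The identity above gives
\[W^\tau(n(x)g) = \tau(\psi_v(x))\, W^\tau(g) = \psi_v^{\alpha_{\tau,v}}(x)\, W^\tau(g),\]
where $\psi_v^{\alpha}(x):=\psi_v(\alpha x)$, so $W^\tau$ lies in the Whittaker space for $\psi_v^{\alpha_{\tau,v}}$. Further, the assignment $W\mapsto \tau\circ W$ is a $\tau$-semilinear intertwiner for right translation, so it realizes ${}^\tau\Pi_v$ on $\W({}^\tau\Pi_v,\psi_v^{\alpha_{\tau,v}})$. Since $W^\tau$ inherits right $K_v(\p_v^{c(\Pi_v)})$-invariance pointwise from $\Wv$, satisfies $W^\tau(1)=\tau(1)=1$, and since the conductor is preserved under the Galois twist ${}^\tau\Pi_v$, the function $W^\tau$ is precisely the Whittaker newform for ${}^\tau\Pi_v$ relative to $\psi_v^{\alpha_{\tau,v}}$.

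Finally I would invoke the standard fact that for any admissible $\sigma$ and any $\alpha\in F_v^\times$, the map $W\mapsto \bigl(g\mapsto W(a(\alpha)g)\bigr)$ is a bijection $\W(\sigma,\psi_v)\to\W(\sigma,\psi_v^{\alpha})$. Applied to $\sigma={}^\tau\Pi_v$ and $\alpha=\alpha_{\tau,v}$, the function $g\mapsto W_{{}^\tau\Pi_v}(a(\alpha_{\tau,v})g)$ lives in $\W({}^\tau\Pi_v,\psi_v^{\alpha_{\tau,v}})$. It is right $K_v(\p_v^{c(\Pi_v)})$-invariant because $a(\alpha_{\tau,v})$ normalizes $K_v(\p_v^n)$ (the conjugate of an entry $b\in\D_v^{-1}$ by $a(\alpha_{\tau,v})$ with $\alpha_{\tau,v}\in\O_v^\times$ stays in $\D_v^{-1}$, and similarly for the lower-left entry), and its value at $1$ equals $\omega_{{}^\tau\Pi_v}(\alpha_{\tau,v})\cdot 1 = 1$ since the central character of ${}^\tau\Pi_v$ is $\tau\circ\omega_{\Pi_v}$, still unramified. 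Hence it is also the newform in $\W({}^\tau\Pi_v,\psi_v^{\alpha_{\tau,v}})$, and uniqueness forces
\[\tau(\Wv(g)) = W_{{}^\tau\Pi_v}(a(\alpha_{\tau,v})g).\]
The main obstacle is step one: carefully tracing the $\Aut(\C)$-action on the $p$-power roots of unity inside the values of $\psi_v$ back to multiplication by $\alpha_{\tau,v}$ inside $F_v$, noting that $\alpha_{\tau,p}\in\Z_p$ is not rational and so the identification is only meaningful after reduction modulo a suitable $p^k$ on the exponent. Everything else is routine once this identification and the rigidity of the newform are in place.
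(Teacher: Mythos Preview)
Your argument is correct and is precisely the standard one carried out in the references the paper cites (the paper itself offers no proof beyond pointing to \cite[Section 2]{ACAS} and \cite[Section 3.2.3]{HMF}). One slip to fix: in step (3) you compute the value at $g=1$ as $W_{{}^\tau\Pi_v}(a(\alpha_{\tau,v})) = \omega_{{}^\tau\Pi_v}(\alpha_{\tau,v})\cdot 1$, invoking the central character, but $a(\alpha_{\tau,v})=\bigl(\begin{smallmatrix}\alpha_{\tau,v}&0\\0&1\end{smallmatrix}\bigr)$ is not central. The correct reason the value equals $1$ is that $a(\alpha_{\tau,v})\in K_v(\p_v^{c(\Pi_v)})$ whenever $\alpha_{\tau,v}\in\O_v^\times$ (just check the membership conditions in \eqref{Kv}), so right-invariance of the newform gives $W_{{}^\tau\Pi_v}(a(\alpha_{\tau,v}))=W_{{}^\tau\Pi_v}(1)=1$ directly. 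Likewise, the normalizer argument you give for right $K_v$-invariance of $g\mapsto W_{{}^\tau\Pi_v}(a(\alpha_{\tau,v})g)$ is unnecessary: right-invariance is inherited automatically from that of $W_{{}^\tau\Pi_v}$, independent of what sits on the left.
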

\begin{proof}
See \cite[Section 2]{ACAS} and \cite[Section 3.2.3]{HMF}.
\end{proof}

Let $\Pi_v$ be an irreducible admissible infinite dimensional representation of $\GL_2(F_v)$ with unramified central character and normalised Whittaker newform $\Wv$. Then consider the representation $\Pi_v\otimes| \ |_v^{k_0/2}$, for some integer $k_0$. We now apply the above result to this representation. Therefore using Lemma \ref{key} we have \[\tau\big(W_{\Pi_v\otimes| \ |_v^{k_0/2}}(g)\big)=W_{{}^{\tau}(\Pi_v\otimes| \ |_v^{k_0/2})}(a(\alpha_\tau)g).\] This is equivalent to \[\tau\big(\Wv(g)\big)\tau\big(|\det(g)|^{k_0/2}_v\big)=W_{{}^{\tau}(\Pi_v\otimes| \ |_v^{k_0/2})}(a(\alpha_\tau)g).\] So if $\Pi_v\otimes| \ |_v^{k_0/2}\cong{}^{\tau}(\Pi_v\otimes| \ |_v^{k_0/2})$, we have that 
\begin{equation}\label{action}
\tau\big(\Wv(g)\big)\tau\big(|\det(g)|^{k_0/2}_v\big)=\Wv(a(\alpha_\tau)g)|\det(g)|^{k_0/2}_v.
\end{equation}

\section{Modular forms case}\label{MFcase}

In this section, we study the case of modular forms. The reason we do this is because the proof of the theorem stated below follows a similar method to the one used to prove Theorem \ref{HMFThm} but is less technical. As a consequence of this we are able to prove Thereom \ref{mf2} below, which is an alternative version of \cite[Theorem 4.1]{FEatC} whereby we make some simplifying assumptions on the matrix $\sigma\in\SL_2(\Z)$. The general form of \cite[Theorem 4.1]{FEatC} will follow from applying Theorem \ref{HMFThm} in the setting of classical newforms. 

\subsection{Fourier expansion}

Here we briefly recap the discussion of Section \ref{Class}. For a function $f:\H\rightarrow\C$ we define a function  $f|_kg$ on $\H$ for an integer $k$ and some $g=\smatrix\in\GL_2^+(\R)$ as \[f|_kg(z)=(\det g)^{k/2}(cz+d)^{-k}f(g\cdot z).\] Let $N\geq1$. Let $\Gamma$ be a congruence subgroup of $\SL_2(\Z)$. Let $f$ be a modular form on $\Gamma$ of weight $k$. Then we have that $f$ has a Fourier expansion at infinity of the form \[f(z)=\sum_{n\geq0}a_f(n)e^{2\pi i nz/w},\] where $a_f(n)$ are the Fourier coefficients of $f$ and $w$ is the smallest integer such that $(\begin{smallmatrix} 1 & w \\ 0 & 1 \end{smallmatrix})\in\Gamma$. Note that in the case that $\Gamma=\Gamma_0(N)$ we have that $w=1$. If the first Fourier coefficient of $f|_kg$ is zero for every $g\in\SL_2(\Z)$ then we say that $f$ is a cuspform. If we also have that $a_f(1)=1$ we say that $f$ is normalised. Recall that if $f$ is a modular form for $\Gamma_0(N)$ which is non-zero we have that $k$ is even.

Let $\c$ be a cusp of $\Gamma_0(N)\bs\H$. Then $\c$ can be represented by the point $\frac{a}{L}\in\Q$, where $(a,N)=1$ and $L|N$. This is equivalent to $\sigma\infty$, where $\sigma=(\begin{smallmatrix} a & b \\ L & d \end{smallmatrix})\in\SL_2(\Z)$,  with $L|N$ and $(a,N)=1$. If $f$ is a modular form for $\Gamma_0(N)$ then $f|_k\sigma$ is a modular form for the congruence subgroup $\sigma^{-1}\Gamma_0(N)\sigma$. We have that $f|_k\sigma$ has a Fourier expansion of the form \[f|_k\sigma(z)=\sum_{n>0}a_f(n;\sigma)e^{2\pi inz/w(\sigma)},\] where $a_f(n;\sigma)$ are the Fourier coefficients of $f|_k\sigma$ and $w(\sigma)$ is the width of the cusp. The width of the cusp is defined to be the integer $N/(L^2,N)$ where $L$ is the denominator of the cusp $\c$.

\subsection{Result for modular forms}

\begin{Thm}\label{mf2}
Let $f$ be a normalised newform on $\Gamma_0(N)$ of weight $k$. Let $\Q(f)$ be the field generated by all the Fourier coefficients of $f$. Let $\c$ be a cusp of $\Gamma_0(N)\bs\H$ that is equivalent to $\sigma\infty$ with $\sigma=(\begin{smallmatrix} a & b \\ L & d \end{smallmatrix})\in\sl2Z, \ L|N, \ (a,N)=(d,N)=1$, so that $\a$ is represented by the point $a/L$. Then the Fourier coefficients of $f|_k \sigma$ lie in the cyclotomic extension $\Q(f)(\zeta_{N/L})$.
\end{Thm}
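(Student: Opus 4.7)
The plan is to use the explicit adelic formula \eqref{af} together with the $\tau$-equivariance of local Whittaker newforms (Lemma \ref{key}), and to show that any $\tau\in\Aut(\C)$ which restricts to the identity on $\Q(f)(\zeta_{N/L})$ fixes every Fourier coefficient $a_f(n;\sigma)$; Galois theory then forces $a_f(n;\sigma)\in\Q(f)(\zeta_{N/L})$. Because $k$ is even, the prefactor $\frac{a_f(n_0)}{n_0^{k/2}}\big(\frac{n}{\delta(\c)}\big)^{k/2}$ in \eqref{af} lies in $\Q(f)$ and is already fixed by $\tau$. The entire question therefore reduces to the local invariance
\[
\tau\!\left(W_{\pi_p}(g_p)\right)=W_{\pi_p}(g_p),\qquad g_p:=\begin{pmatrix} n/\delta(\c) & \\ & 1\end{pmatrix}\sigma^{-1},
\]
for every prime $p\mid N$, where $\pi_f=\otimes_p\pi_p$ is the automorphic representation attached to $f$.

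First, Lemma \ref{key} (via \eqref{action}) and the fact that $\tau|_{\Q(f)}=\mathrm{id}$ implies ${}^{\tau}\!\pi_p\cong\pi_p$, hence $W_{{}^{\tau}\!\pi_p}=W_{\pi_p}$ by uniqueness of the normalised Whittaker newform, give
\[
\tau(W_{\pi_p}(g_p))=W_{\pi_p}(a(\alpha_{\tau,p})\,g_p),
\]
where $\alpha_{\tau,p}\in\mathbb{Z}_p^{\times}$ is the $p$-component of the id\`ele $\alpha_\tau\in\hat{\mathbb{Z}}^{\times}$ attached to $\tau$ by the cyclotomic character. The extra assumption $\tau|_{\Q(\zeta_{N/L})}=\mathrm{id}$ translates precisely into the congruences
\[
\alpha_{\tau,p}\equiv 1\pmod{p^{v_p(N/L)}}\qquad\text{for every } p\mid N.
\]

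The crux is then the local identity $W_{\pi_p}(a(\alpha_{\tau,p})g_p)=W_{\pi_p}(g_p)$, which I would derive from the factorisation
\[
a(\alpha_{\tau,p})\,g_p \;=\; g_p\cdot k_p,\qquad k_p:=\sigma\,a(\alpha_{\tau,p})\,\sigma^{-1},
\]
valid because the diagonal matrices $a(\alpha_{\tau,p})$ and $a(n/\delta(\c))$ commute. This reduces the entire problem to checking $k_p\in K_p(p^{v_p(N)})$, since $W_{\pi_p}$ is right $K_p(p^{c(\pi_p)})$-invariant and $c(\pi_p)=v_p(N)$. Using $ad-bL=1$, one computes
\[
k_p=\begin{pmatrix}\alpha_{\tau,p}+bL(\alpha_{\tau,p}-1) & ab(1-\alpha_{\tau,p})\\ Ld(\alpha_{\tau,p}-1) & 1-bL(\alpha_{\tau,p}-1)\end{pmatrix},\qquad \det k_p=\alpha_{\tau,p}\in\mathbb{Z}_p^{\times}.
\]
The decisive lower-left entry has $p$-adic valuation at least $v_p(L)+v_p(\alpha_{\tau,p}-1)\geq v_p(L)+v_p(N/L)=v_p(N)$, and $(a,N)=(d,N)=1$ ensures $v_p(a)=v_p(d)=0$, from which the remaining three entries straightforwardly satisfy the defining conditions of $K_p(p^{v_p(N)})$.

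The main obstacle is precisely the local matrix bookkeeping above; the saving feature is that $\sigma a(\alpha_{\tau,p})\sigma^{-1}$ admits a transparent closed form, so no unipotent correction $n(x)$ on the left is needed. Assembling the resulting local invariances across all $p\mid N$ in \eqref{af} yields $\tau(a_f(n;\sigma))=a_f(n;\sigma)$, completing the argument.
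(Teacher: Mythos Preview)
Your proof is correct and follows essentially the same route as the paper: both use the explicit formula \eqref{af}, apply Lemma \ref{key} together with ${}^{\tau}\pi_p\cong\pi_p$ (from $\tau|_{\Q(f)}=\mathrm{id}$), and reduce to the single matrix check $\sigma\,a(\alpha_{\tau,p})\,\sigma^{-1}\in K_p(p^{v_p(N)})$, which succeeds because $\alpha_{\tau,p}\equiv 1\pmod{p^{v_p(N/L)}}$ forces the lower-left entry into $p^{v_p(N)}\Z_p$. Your write-up is in fact slightly more explicit than the paper's in verifying the remaining entries of $k_p$ via $(a,N)=(d,N)=1$.
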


\begin{Remark}
Recall that to show a complex number $\rho\in\C$ is an element of a number field it is equivalent to showing that $\rho$ is fixed by the Galois group of that number field.
\end{Remark}

\begin{proof}[Proof of \Cref{mf2}]
Let $\tau\in\Aut(\C)$ fix $\Q(f)(\zeta_{N/L})$. So in particular $\tau$ fixes $\Q(f)$ and all the $\zeta_{N/L}$ roots of unity. 

Let $n=n_0\prod_{p|N}p^{n_p}$ with $(n_0,N)=1$. In the proof of Proposition 3.3 in \cite{ACAS} the authors give an explicit formula for $a_f(n;\sigma)$, given by \[a_f(n;\sigma)=\frac{a_f(n_0)}{n_0^{k/2}}\Big(\frac{n}{\delta(\c)}\Big)^{k/2}\prod_{p|N}\Wp (a(n/\delta(\c))\sigma^{-1}),\] where $\delta(\c)$ is a rational number related to the cusp $\c$ and $\Wp$ are the local Whittaker newforms associated to $f$. Note that ${n}/{\delta(\c)}\in\Q$ and since $k$ is even we have that the second term is automatically fixed by $\tau$. Since $\tau$ fixes $\Q(f)$ we have that $\tau$ fixes $a_f(n_0)$. 

Since $\tau$ fixes $\zeta_{N/L}$ we have that $\alpha_\tau\equiv1 \pmod{p^{n_p-l_p}}$. This implies \[\sigma\begin{pmatrix} \alpha_\tau & 0 \\ 0 & 1\end{pmatrix}\sigma^{-1}=\begin{pmatrix}ad\alpha_\tau-bc & ab(1-\alpha_\tau) \\ dL(\alpha_\tau-1) & ad-bL\alpha_\tau\end{pmatrix}\in K_p(p^{n_p}).\] Since $\tau$ fixes $\Q(f)$ we have that $\pi_p={}^{\tau}\pi_p$. Therefore using Lemma \ref{key} we have that \[\tau(\Wp(a(n/\delta(\c))\sigma^{-1}))=\Wp(a(\alpha_\tau)a(n/\delta(\c))\sigma^{-1}).\] Recall that $\Wp$ is right invariant by $K_p(p^{n_p})$. Thus from the condition that $\sigma a(\alpha_\tau)\sigma^{-1}\in K_p(p^{n_p})$ we have that  \[\Wp(a(n/\delta(\c))\sigma^{-1})=\Wp(a(\alpha_\tau)a(n/\delta(\c))\sigma^{-1}).\] Thus $\tau$ fixes $\af$ and so $\af\in\Q(f)(\zeta_{N/L})$.
\end{proof}

\section{Preliminary results}\label{Prelim}

In this section, we define the classical Hilbert newform and how we can reinterpret this definition in the adelic setting. This is similar to the classical case discussed in Section \ref{ctoa}. We then prove some general results needed to prove Theorem \ref{HMFThm}. Specifically, we show that the Fourier coefficients of Hilbert newform equals the global Whittaker newform. 

\subsection{Hilbert modular forms}\label{Setup}

We define \[\H^n=\{z=(z_1,...,z_n)\in\C^n:\Ima(z_j)>0, \forall j=1,...,n\},\] where we write $z=x+iy$ to mean for $z=(z_1,...,z_n)$ and $z_j=x_j+iy_j$ for each $j$. Recall from Section \ref{hilbert} that $\GL_2^+(F)$ acts on $\H^n$ in the following way \[g\cdot z=\Bigg(\frac{\eta_1(a_1)z_1+\eta_1(b_1)}{\eta_1(c_1)z_1+\eta_1(d_1)},...,\frac{\eta_n(a_n)z_n+\eta_n(b_n)}{\eta_n(c_n)z_n+\eta_n(d_n)}\Bigg).\] We define the congruence subgroup $\congsub$ of $\GL_2^+(F)$ for each $\mu$ as 
\begin{equation}\label{congsub}
\congsub=x_\mu(\GL_2^+(F_\infty)K_0(\n))x_\mu^{-1}\cap\GL_2(F),
\end{equation}
where 
\begin{equation}\label{K0}
K_0(\n)=\prod_{v<\infty}K_v(\n).
\end{equation}

Then $f_\mu:\H^n\rightarrow\C$ is a Hilbert modular form of level $\congsub$ and weight $k=(k_1,...,k_n)$ if $f_\mu$ is holomorphic on $\H^n$ and at cusps and \[f_\mu||_k\alpha(z):=(\det\alpha)^{k/2}(cz+d)^{-k}f_\mu(\alpha\cdot z)=f_\mu(z),\] for every $\alpha\in\congsub$. 

\begin{Remark}
One can also consider this definition with a character $\chi$ on $\A_F^\times/F^\times$. We will assume that this character is trivial. For a treatment of $\chi$ being non-trivial see \cite[Section 4]{HMF} and the references therein.  
\end{Remark}

If the constant term in the Fourier expansion of $f_\mu||_k\gamma$ is zero for every $\gamma\in\GL_2^+(F)$ then we say $f_\mu$ is a cuspform. For the space of cuspforms to be nonempty we require that $k_j\geq1$, see \cite[Section 1.7]{HolHMF} for a proof of this fact. In this case $f_\mu$ has a Fourier expansion at infinity of the form \[f_\mu(z)=\sum_{\xi\in(t_\mu\O_F)_+}a_\mu(\xi)e^{2\pi i \text{Tr}(\xi z)}.\] The space of cuspforms of level $\congsub$ and weight $k$ is denoted $S_k(\congsub)$. For each $\mu$ choose a cuspform in $S_k(\congsub)$ and put $\f=(f_1,...,f_h)$.  Then we say that $\f$ is a cuspform of level $\n$ and weight $k=(k_1,...,k_n)$. The space of all such $\f$ is denoted by $S_k(\n)$ and we have \[S_k(\n)=\bigoplus_{\mu=1}^h S_k(\congsub).\] An element $\f\in S_k(\n)$ is said to be a cuspidal classical Hilbert newform if it lies in the orthogonal complement of the oldspace and is a common eigenfunction for the Hecke operators at almost all primes, see \cite[Section 2]{Shi78}.

\begin{Remark}
For a more detailed basic introduction to Hilbert modular forms, one can see, for example, \cite[Chapter 2]{123}.
\end{Remark}

\subsection{Classical and adelic correspondence}\label{dictionary}

Here we give a description on how one can go between classical Hilbert newforms and adelic newforms (as in Section \ref{Setup} we only consider the case of trivial character $\chi$). We will give an outline of how this description is constructed for a full discussion see \cite[Chapter 3]{HolHMF} and \cite[Section 4]{HMF}, it will be similar to the correspondence given in Section \ref{gobetween}. Let $A_k(\n)$ be the subspace of $A_{\text{cusp}}(\GL_2(F)\bs\GL_2(\A_F))$ that consists of functions such that
\begin{enumerate}
\item[(i)] $\phi(gr(\theta))=e^{-ik\theta}\phi(g)$, where $r(\theta)=\left\{\begin{pmatrix} \cos\theta_j & -\sin\theta_j \\ \sin\theta_j & \cos\theta_j \end{pmatrix}\right\}_j\in \text{SO}(2)^n$,
\item[(ii)] $\phi(g\kappa)=\phi(g)$, where $\kappa\in K_0(\n)$,
\item[(iii)] $\phi$ is an eigenfunction of the Casimir element $\Delta:=(\Delta_1,...,\Delta_n)$ as a function of $\GL_2(\R)^n$ with its eigenvalue $\lambda=\prod_{j=1}^n\frac{k_j}{2}(1-\frac{k_j}{2})$.
\end{enumerate}
Then $A_k(\n)$ is isomorphic to $S_k(\n)$. To see this we define, for each $\f=(f_1,...,f_h)\in S_k(\n)$
\begin{equation}\label{CtoA}
\phi(\gamma x_\mu g_\infty \kappa)=f_\mu||_kg_\infty(\mathbf{i})
\end{equation}
where $\gamma\in\GL_2(F), \  g_\infty\in\GL_2^+(F_\infty), \ \kappa\in K_0(\n)$ and $\mathbf{i}=(i,...,i)$. This is well defined because of (\ref{congsub}) and (\ref{K0}). One can also check that $\phi$ as above gives an element of $A_k(\n)$. Conversely, given $\phi\in A_k(\n)$ define \[f_\mu(z)=y^{-k/2}\phi(x_\mu g_z),\] where $g_z\mathbf{i}=z$. Then we have that $\f:=(f_1,...,f_h)\in S_k(\n)$.

We say that $\phi\in A_k(\n)$ is an adelic newform if it generates an irreducible cuspidal automorphic representation $\Pi=\Pi_\phi$ of $\GL_2(\AF)$ of conductor $\n$. The set of adelic newforms in $A_k(\n)$ is in bijective correspondence with the set of classical cuspidal Hilbert newforms $\f\in S_k(\n)$. Precisely given $\phi$ as above it corresponds to a classical cuspidal Hilbert newform $\f=(f_1,...,f_h)$ of level $\n$ and weight $k$ defined via $f_\mu(z)=y^{-k/2}\phi(x_\mu g_z)$.

Given an adelic newform $\phi_\f$ attached to $\f\in S_k(\n)$, the automorphic representation of $\GL_2(\AF)$ generated by it is known to be irreducible, see for example \cite[Theorem 4.7]{HMF}. The following lemma gives an explicit relation between the action of $\Aut(\C)$ on newforms and representations. 

\begin{Lemma}\label{relation}
Let $\f=(f_1,...,f_h)$ be a cuspidal classical Hilbert newform. Given $\tau\in\Aut(\C)$ let $\f^\tau$ be defined as in \cite[Proposition 2.6]{Shi78}. Let $\Pi$ be the automorphic representation generated by $\phi_\f$ and $\Pi'$ be the automorphic representation generated by $\phi_{\f^\tau}$. Then for each finite place $v$ we have \[\Pi'_v\otimes|\ |_v^{k_0/2}\cong{}^{\tau}(\Pi_v\otimes|\ |_v^{k_0/2}).\]
\end{Lemma}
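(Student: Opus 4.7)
The plan is to verify the local isomorphism first at all unramified finite places by comparing Satake parameters, and then to promote this to all finite places using strong multiplicity one for $\GL_2$. The reason a twist by $|\cdot|_v^{k_0/2}$ is needed at all is that Shimura's normalised Hecke eigenvalues $c(\mathfrak{m},\f)$ (which are algebraic by \cite[Proposition 2.8]{Shi78}) are obtained from the raw Fourier coefficients by a factor involving $\xi^{-k/2}$, whereas the Satake parameters of the unramified local components of $\Pi$ are naturally packaged using the absolute value normalisation $|\cdot|_v^{1/2}$. The difference between $k$ and $k_0\mathbf{1}$ (which enters via $c_\mu(\xi;f_\mu)$) is exactly what produces the $|\cdot|_v^{k_0/2}$ twist in the statement. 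Shimura's definition \cite[Proposition 2.6]{Shi78} characterises $\f^\tau$ by the property that $c(\mathfrak{m},\f^\tau)=\tau(c(\mathfrak{m},\f))$ for all ideals $\mathfrak{m}$.

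First I would fix an unramified finite place $v$ and write $\Pi_v\cong\mu_{1,v}\boxplus\mu_{2,v}$ as an unramified principal series, so $\Pi_v\otimes|\cdot|_v^{k_0/2}\cong (\mu_{1,v}|\cdot|_v^{k_0/2})\boxplus(\mu_{2,v}|\cdot|_v^{k_0/2})$. The Satake parameters of the twisted representation are $\alpha'_{i,v}:=q_v^{-k_0/2}\mu_{i,v}(\varpi_v)$, and the classical-adelic dictionary expresses $c(\mathfrak{p}_v,\f)$ as a polynomial in these $\alpha'_{i,v}$ with rational coefficients; in particular $\alpha'_{1,v},\alpha'_{2,v}\in\overline{\Q}$. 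By construction of ${}^\tau\Pi_v$ (Section \ref{action}) and the fact that $\tau$ acts on an unramified character $\chi$ through its value $\chi(\varpi_v)$, the Satake parameters of ${}^\tau(\Pi_v\otimes|\cdot|_v^{k_0/2})$ are $\tau(\alpha'_{1,v}),\tau(\alpha'_{2,v})$. Using Shimura's characterisation of $\f^\tau$, the Satake parameters of $\Pi'_v\otimes|\cdot|_v^{k_0/2}$ must satisfy the same polynomial identities with coefficients $\tau(c(\mathfrak{p}_v,\f))=\tau\bigl(\text{symmetric function of }\alpha'_{i,v}\bigr)$. Since an unramified principal series is determined up to isomorphism by its unordered Satake parameters, this yields the isomorphism at every unramified finite place.

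Next I would extend to the ramified finite places via strong multiplicity one for $\GL_2$, see \cite[Theorem 3.3.6]{Bump}. Let $\Pi''$ be the restricted tensor product having local component ${}^\tau(\Pi_v\otimes|\cdot|_v^{k_0/2})\otimes|\cdot|_v^{-k_0/2}$ at each finite $v$ and the same archimedean components as $\Pi'$. The previous paragraph shows $\Pi''_v\cong\Pi'_v$ at all but finitely many places. Because $\Pi'$ is cuspidal automorphic and $\Pi''$ agrees with it at almost all places, strong multiplicity one forces $\Pi''\cong\Pi'$ as global representations, hence $\Pi''_v\cong\Pi'_v$ for every finite $v$, including the ramified ones.

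The main obstacle will be the careful book-keeping that converts Shimura's classical normalisation into the adelic Satake-parameter picture: one must match the exponent of $q_v$ coming from the weight $\xi^{(k_0\mathbf{1}-k)/2}$ in the definition of $c_\mu(\xi;f_\mu)$ with the unitary normalisation that produces the $|\cdot|_v^{k_0/2}$ twist, and one must confirm that the definition of $\f^\tau$ is compatible with the local action ${}^\tau(\cdot)$ on unramified characters (essentially the content of the commutative diagram in Lemma \ref{key} for unramified principal series). Once this compatibility is written down cleanly, the argument at unramified places is essentially mechanical, and strong multiplicity one handles the remaining ramified places with no further local input.
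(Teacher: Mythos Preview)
The paper does not actually prove this lemma; it simply cites \cite[Theorem 4.19]{HMF}. So you are attempting something the paper defers to the literature.

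Your unramified argument is fine in outline, but the passage to ramified places has a genuine gap. Strong multiplicity one (as in \cite[Theorem 3.3.6]{Bump}) is a statement about two \emph{cuspidal automorphic} representations: if both are automorphic cuspidal and agree at almost all places, then they are isomorphic. It is \emph{not} a statement that an arbitrary restricted tensor product agreeing with a cuspidal $\Pi'$ almost everywhere must equal $\Pi'$; that would be false, since one could freely change the ramified local components. In your argument $\Pi''$ is merely defined as a restricted tensor product of local representations, and you have not shown that it is automorphic. Without that, strong multiplicity one gives you nothing at the ramified places.

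What is actually needed---and is the real content of the result in \cite{HMF}---is the arithmeticity statement that the collection $\{{}^{\tau}(\Pi_v\otimes|\cdot|_v^{k_0/2})\}_{v<\infty}$, together with the correct archimedean component, occurs in the cuspidal spectrum. This is typically proved via the rational structure on the relevant $(\mathfrak{g},K)$-cohomology (the Eichler--Shimura--Harder theory for Hilbert modular forms): one shows that $\tau$ acts on the rational structure of the cohomology of the locally symmetric space, carrying the Hecke-eigensystem of $\Pi$ to that of a genuinely automorphic ${}^{\tau}\Pi$, and then the local compatibility at every finite place (including ramified ones) comes from the compatibility of the $\Aut(\C)$-action with the Hecke algebra at \emph{all} places, not just the unramified ones. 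Once automorphy of ${}^{\tau}\Pi$ is established, your strong multiplicity one step becomes valid and identifies ${}^{\tau}\Pi$ with $\Pi'$. Alternatively one can argue purely locally at ramified places via the rational structure on the Kirillov/Whittaker model (this is closer to what Lemma~\ref{key} encodes), but either way the missing ingredient is a non-trivial rationality theorem, not a formal manipulation.
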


\begin{proof}
See \cite[Theorem 4.19]{HMF}.
\end{proof}

\subsection{Whittaker expansion}

We now give the explicit relation between the Fourier coefficients of a classical Hilbert newform at a cusp and the global Whittaker newform. We start with a general result which holds for a larger class of Hilbert automorphic forms.

\begin{Lemma}\label{general}
Let $M$ be an ideal of $\O_F$ with localisation $M_v=\varpi_v^{m_v}\O_v$ for every finite place $v$ such that for almost all $v$ we have that $m_v=0$.  Let $\phi$ be a Hilbert automorphic form in the space of $\Pi$ such that $\phi$ is right invariant by $\big(\begin{smallmatrix} 1 & \varpi_v^{m_v}\mathcal{O}_v \\0 & 1\end{smallmatrix}\big)$ for every finite place and satisfies (i) and (iii) in Section \ref{dictionary}. Let $h$ be a holomorphic function on $\mathbb{H}^n$ defined by the equation \[h(z)=j(g_z,\mathbf{i})^k\phi(g_z).\] Then $h$ has a Fourier expansion of the form 
\begin{equation}\label{FE}
h(z)=\sum_{\xi\in M^{-1}\mathfrak{D}_F^{-1}}a_h(\xi)e^{2\pi i \Tr(\xi z)}.
\end{equation}
 Moreover \[W_\phi(a(\xi)g_z)=
\begin{cases}
y^{k/2}a_h(\xi)e^{2\pi i\text{Tr}(\xi z)}, \ \text{if} \ \xi\in M^{-1}\mathfrak{D}_F^{-1},\\
0, \ \text{otherwise}.
\end{cases}\] 
\end{Lemma}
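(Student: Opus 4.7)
The strategy is to first show that $h$ has the claimed Fourier expansion, and then identify the $\xi$-th Fourier coefficient with $W_\phi(a(\xi)g_z)$ via Fourier analysis on the compact abelian group $\A_F/F$.

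First I would verify that $h$ is well-defined (independent of the choice of $g_z$) and holomorphic on $\H^n$. Well-definedness uses property (i) to handle the stabilizer ambiguity, while holomorphy follows from (i) together with the Casimir eigenvalue condition (iii): the pair of conditions forces the archimedean component of $\phi$ to satisfy the Cauchy--Riemann equations after twisting by $j(g_z,\mathbf{i})^k$. The next step is periodicity: for any $\nu\in M$, I would use that $n(\nu)\in\GL_2(F)$ (left invariance of $\phi$) together with the right invariance of $\phi$ under $n(\varpi_v^{m_v}\mathcal O_v)$ at each finite place to absorb the non-trivial finite-place components of $n(\nu)$. Concretely, choosing the representative $g_z=n(x)a(y)$ at the archimedean places and identity at the finite places, one has $n(\nu)g_z = (n(\nu+x)a(y))_\infty\cdot (n(\nu))_{\text{fin}}$, and the finite-place factor lies in $n(M_f)$, which is in the right-invariance group of $\phi$. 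This yields $h(z+\nu)=h(z)$.

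From the combination of holomorphy and $M$-periodicity, I would deduce the Fourier expansion \eqref{FE} as a standard consequence of Fourier analysis on $F_\infty/M$: the dual lattice with respect to the bilinear form $(\xi,\nu)\mapsto e^{2\pi i\Tr(\xi\nu)}$ is exactly $M^{-1}\D_F^{-1}$ by the defining property of the absolute different $\D_F$.

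For the Whittaker identity, for $\xi\in F^\times$ I would manipulate the defining integral using $n(u)a(\xi)=a(\xi)n(\xi^{-1}u)$ and the left-$\GL_2(F)$-invariance of $\phi$ (applied to the diagonally embedded $a(\xi)$), followed by the change of variables $u\mapsto \xi u$ (which is measure-preserving on $\A_F/F$ since $|\xi|_{\A_F}=1$). This yields
\[
W_\phi(a(\xi)g_z)=\int_{\A_F/F}\phi(n(u)g_z)\overline{\psi(\xi u)}\,du =: c_\xi(g_z),
\]
i.e.\ the $\xi$-th Fourier coefficient of $u\mapsto \phi(n(u)g_z)$ on $\A_F/F$.

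The final step is to compute $c_\xi(g_z)$. The function $u\mapsto\phi(n(u)g_z)$ is invariant under $u\mapsto u+m_f$ for $m_f\in M_f:=\prod_v\varpi_v^{m_v}\mathcal O_v$, by the right-$n(M_f)$-invariance of $\phi$ (the finite-place contribution of $n(u+m_f)g_z$ differs from that of $n(u)g_z$ by right-multiplication by an element of $n(M_f)$, since $g_z$ is trivial at finite places). Hence $c_\xi(g_z)=0$ unless $\psi_\xi$ is trivial on $M_f$, which, using \eqref{formula} and the definition of $\D_F$, translates to the condition $\xi\in M^{-1}\D_F^{-1}$; this gives the vanishing clause of the lemma. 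For $\xi\in M^{-1}\D_F^{-1}$, the Fourier coefficient $c_\xi(g_z)$ can be read off by restricting the character $\psi$ to the archimedean variable $v_\infty\in F_\infty$ (setting the finite part to $0$) and comparing with the archimedean Fourier expansion of $h(z+v_\infty)$. Here it is crucial to take the $\SL_2$-normalized choice $g_z$ with $\det g_z=1$, so that $j(g_z,\mathbf{i})=y^{-1/2}$ and thus $\phi(n(v_\infty)g_z)=y^{k/2}h(z+v_\infty)$; the expansion of $h(z+v_\infty)$ then yields the claimed factor $y^{k/2}a_h(\xi)e^{2\pi i\Tr(\xi z)}$.

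The main obstacle I foresee is bookkeeping: carefully tracking the archimedean normalization factor $j(g_z,\mathbf{i})^k=y^{-k/2}$ through the adelic-to-archimedean reduction of the Fourier integral in order to produce the precise power $y^{k/2}$ in the final formula, and verifying that the dualities of additive characters work out (so that the Fourier-support condition $\xi\in M^{-1}\D_F^{-1}$ matches in both the archimedean and adelic computations).
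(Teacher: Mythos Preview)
Your proposal is correct and follows essentially the same route as the paper: establish holomorphy and $M$-periodicity of $h$ to get the Fourier expansion over $M^{-1}\D_F^{-1}$, then unfold $W_\phi(a(\xi)g_z)$ via the standard manipulation $n(u)a(\xi)=a(\xi)n(\xi^{-1}u)$ and left $\GL_2(F)$-invariance, and finally split the adelic integral using the right $n(M_f)$-invariance to isolate the support condition and reduce to the archimedean Fourier integral of $h$. The only cosmetic difference is that the paper makes the last step explicit by choosing the fundamental domain $\A_F/F\cong F_\infty/M\times\prod_{v<\infty}\varpi_v^{m_v}\O_v$ and computing the local integrals $\int_{\O_v}\psi_v(-\xi\varpi_v^{m_v}x_v)\,dx_v$ directly, whereas you phrase it as ``Fourier coefficients of an $M_f$-invariant function vanish on characters nontrivial on $M_f$''; these are the same computation.
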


\begin{proof}
From the fact that $\phi$ satisfies (i) and (iii) in Section \ref{dictionary}, it follows that $h$ is holomorphic on $\H^n$ and $h(z+\gamma)=h(z)$, for every $\gamma\in M$. This shows that \[h(z)=\sum_{\xi\in F} a_h(\xi)e^{2\pi i \Tr(\xi z)},\] for some complex numbers $a_h(\xi)$.

Suppose $a_h(\xi)\neq0$. From the property that $h$ is invariant by $M$ we have that $\Tr(\xi \gamma)\in\Z$ for every $\gamma\in M$. This property is equivalent to that of $\xi\in M^{-1}\mathfrak{D}_F^{-1}$. To see this note that for every place $v<\infty$ we have that $\Tr(\xi_v\gamma_v)\in\Z_v$, for every $\gamma_v\in M_v$. Thus $\Tr((\varpi_v^{m_v}\xi_v)n_v)\in\Z_v$ for every $n_v\in\O_v$. This is equivalent to the following \[\varpi_v^{m_v}\xi_v\in\mathfrak{D}_v^{-1}\Longleftrightarrow \xi_v\in\varpi_v^{-m_v}\mathfrak{D}_v^{-1}\Longleftrightarrow\xi\in M^{-1}\mathfrak{D}_F^{-1}.\] This shows that $h$ has a Fourier expansion of the form (\ref{FE}). Next let $\xi\in F^\times$ and let $z=x+iy\in\H^n$. Then using the definition of $W_\phi$, (\ref{Def}), we have that 
\begin{align*} 
W_\phi(a(\xi)g_z)&=\int_{\AF/F}\phi\big(n(x')a(\xi)g_z\big)\overline{\psi(x')} \ dx'\\
&=\int_{\AF/F} \phi\big(a(\xi)n(\xi^{-1}x')g_z)\psi(-x') \ dx'\\
&=\int_{\AF/F} \phi\big(n(\xi^{-1}x')g_z)\psi(-x') \ dx'\\
&=\int_{\AF/F} \phi\big(n(x')g_z)\psi(-\xi x') \ dx'\\
&=\int_{\AF/F} \phi(n(x'+x)a(y))\psi(-\xi x') \ dx'\\
&=\int_{\AF/F} \phi(n(x')a(y))\psi(-\xi x')\psi(\xi x) \ dx'\\
&=e(\Tr(\xi x))\int_{\AF/F} \phi\big(n(x')a(y))\psi(-\xi x') \ dx'.
\end{align*}
We denote the integral in the last line by $I_\phi(y,\xi)$. We now want to split this integral into the product of two integrals, using the fundamental domain \[\AF/F=F_\infty/M\times\prod_{v<\infty}\varpi_v^{m_v}\O_v,\] using strong approximation. Therefore using $y\in F_\infty$ and the right invariance of $\phi$ by $n(\varpi_v^{m_v}\O_v)$ we have that 
\begin{align*}
\text{Vol}(F_\infty/M)I_\phi(y,\xi)&=\int_{F_\infty/M}\phi(n(x_\infty)a(y))\psi_\infty(-\xi x_\infty) \ dx_\infty\prod_{v<\infty}\int_{\O_v} \psi_v(-\xi \varpi_v^{m_v}x_v) \ dx_v,\\
&=\int_{F_\infty/M}y^{k/2}h(x_\infty+iy)e(-\Tr(\xi x_\infty)) \ dx_\infty\prod_{v<\infty}\int_{\O_v} \psi_v(-\xi\varpi_v^{m_v}x_v) \ dx_v.
\end{align*}
We have that $I_\phi(y,\xi)$ will only be non-zero when \[\int_{\O_v}\psi_v(-\xi\varpi_v^{m_v}x_v)\ dx_v\neq0\] for each $v$. Recall that $\psi_v=\psi_p\circ\Tr_{F_v/\Qp}$ and $\psi_p(x)=1$ for every $x\in\Z_p$. If the finite place integral is non-zero for each $v$ we write $\xi=\varpi_v^nu$ where $n\in\Z$ and $u\in\O_v^\times$. Therefore using (\ref{formula}) we have that \[\int_{\O_v}\psi_v(\xi\varpi_v^{m_v}x_v) \ dx_v=\int_{\O_v}\psi_v(\varpi_v^{m_v+n}uk) \ dk=
\begin{cases}
0, \ \text{if} \ m_v+n<d_v,\\
1, \ \text{if} \ m_v+n\geq d_v,
\end{cases}\] 
Therefore we have that \[I_\phi(y,\xi)=
\begin{cases}
\frac{1}{\text{Vol}(F_\infty/M)}\int_{F_\infty/M}y^{k/2}h(x_\infty+iy)e(-\Tr(\xi x_\infty)) \ dx_\infty, \ \text{if} \ \xi\in M^{-1}\mathfrak{D}_F^{-1},\\
0, \ \text{otherwise}.
\end{cases}\] Using the Fourier expansion of $h$, \[h(x_\infty+iy)=\sum_{\xi'\in M^{-1}\mathfrak{D}_F^{-1}}a_h(\xi')e^{2\pi i \Tr(\xi'x_\infty)}e^{-2\pi \Tr(\xi'y)},\] we have that 
\[I_\phi(y,\xi)=
\begin{cases}
y^{k/2}a_h(\xi)e^{-2\pi i\Tr(\xi y)}, \ \text{if} \ \xi\in M^{-1}\mathfrak{D}_F^{-1},\\
0, \ \text{otherwise}.
\end{cases}\] 
Therefore for $\xi\in M^{-1}\mathfrak{D}_F^{-1}$ we have \[W_\phi(a(\xi)g_z)=e(\Tr(\xi x))y^{k/2}a_h(\xi)e(i\Tr(\xi y))=y^{k/2}a_h(\xi)e(\Tr(\xi z)),\] and for $\xi\notin M^{-1}\mathfrak{D}_F^{-1}$ we have $W_\phi(a(\xi)g_z)=0$.
\end{proof}

From now on we let $\phi$ be an adelic newform in the space of $\Pi$ and $\f$ be the corresponding classical Hilbert newform. We shall now also assume that $k_j\geq1$ for every $j=1,...,n$. Let $\sigma=\smatrix\in\Gamma_\mu(1)$, then recall that if $f_\mu$ is a Hilbert modular form for $\congsub$ we have $f_\mu||_k\sigma(z)$ is a Hilbert modular form for $\sigma^{-1}\Gamma_\mu(\n)\sigma$. We have for $\sigma\in\Gamma_\mu(1)$, using \eqref{CtoA}
\begin{align*}
f_\mu||_k\sigma(z)&=y^{-k/2}(f_\mu||_k\sigma g_z)(\mathbf{i})\\
&=y^{-k/2}\phi(x_\mu\iota_\infty(\sigma)g_z)\\
&=y^{-k/2}\phi(\iota_\infty(\sigma)x_\mu g_z)\\
&=y^{-k/2}\phi(\iotaf x_\mu g_z)\\
&=y^{-k/2}\phi(g_z\iotaf x_\mu),
\end{align*}
where $\iotaf x_\mu$ is an element of the finite adeles.

Our aim is to apply Lemma \ref{general} to the specific case of $f_\mu||_k\sigma$ where $\sigma\in\Gamma_\mu(1)$. By the above calculation, this corresponds to $(\iotaf x_\mu)\phi$ in the setup of Lemma \ref{general}. The first step to do this is by finding the ideal $M'$ of $\O_F$ corresponding to $M_v'=\varpi_v^{m'_v}\O_v$ such that $\big(\begin{smallmatrix}1 & M'_v \\ & 1\end{smallmatrix}\big)\in\sigma^{-1}\Gamma_\mu(\n)\sigma$. That is we require $\sigma\big(\begin{smallmatrix}1 & M'_v \\ & 1\end{smallmatrix}\big)\sigma^{-1}\in\congsub$. Equivalently for each place $v$, we require \[\begin{pmatrix}1 & \varpi_v^{m'_v}\O_v \\ & 1\end{pmatrix}\in\sigma^{-1}x_{\mu,v} K_v(\varpi_v^{n_v})x_{\mu,v}^{-1}\sigma.\] We can view 
\begin{equation}\label{sigma}
\sigma=x_{\mu,v}(\begin{smallmatrix}1 & \\ & \varpi_v^{d_v}\end{smallmatrix})(\begin{smallmatrix}a' & b'\\ c' & d' \end{smallmatrix}) (\begin{smallmatrix}1 & \\ & \varpi_v^{-d_v}\end{smallmatrix})x_{\mu,v}^{-1},
\end{equation}
where $(\begin{smallmatrix}a' & b'\\ c' & d'\end{smallmatrix})\in\GL_2(\O_F)$ and $x_{\mu,v}=(\begin{smallmatrix}1 & \\ & \varpi_v^{t_v}\end{smallmatrix})$. Denote $A:=(\begin{smallmatrix}a' & b'\\ c' & d'\end{smallmatrix})\in\GL_2(\O_F)$. Therefore 
\begin{align*}
&\begin{pmatrix} 1 & \varpi_v^{m'_v}\O_v \\ & 1\end{pmatrix} \in x_{\mu,v}\begin{pmatrix}1 & \\ & \varpi_v^{d_v}\end{pmatrix}A^{-1}\begin{pmatrix}1 & \\ & \varpi_v^{-d_v}\end{pmatrix} K_v(\varpi_v^{n_v})\begin{pmatrix}1 & \\ & \varpi_v^{d_v}\end{pmatrix}A\begin{pmatrix}1 & \\ & \varpi_v^{-d_v}\end{pmatrix}x_{\mu,v}^{-1}\\
&\Longleftrightarrow \begin{pmatrix} 1 & \varpi_v^{m'_v+t_v+d_v}\O_v \\ & 1\end{pmatrix}\in A^{-1}\begin{pmatrix}1 & \\ & \varpi_v^{-d_v}\end{pmatrix} K_v(\varpi_v^{n_v})\begin{pmatrix}1 & \\ & \varpi_v^{d_v}\end{pmatrix}A\\
&\Longleftrightarrow A\begin{pmatrix} 1 & \varpi_v^{m'_v+t_v+d_v}\O_v \\ & 1\end{pmatrix}A^{-1}\in \begin{pmatrix}1 & \\ & \varpi_v^{-d_v}\end{pmatrix}K_v(\varpi_v^{n_v})\begin{pmatrix}1 & \\ & \varpi_v^{d_v}\end{pmatrix}.\stepcounter{equation}\tag{\theequation}\label{condition}
\end{align*}
On computing the left hand side of (\ref{condition}) we require \[\begin{pmatrix} \eps^{-1}(a'd'-b'c'-a'c'\varpi_v^{m_v'+t_v+d_v}\alpha) & \eps^{-1}(a')^2\varpi_v^{m_v'+t_v+d_v}\alpha \\ -\eps^{-1}(c')^2\varpi_v^{m_v'+t_v+d_v}\alpha & \eps^{-1}(a'd'-b'c'+a'c'\varpi_v^{m_v'+t_v+d_v}\alpha) \end{pmatrix} \in y_v^{-1}K_v(\varpi_v^{n_v})y_v,\] where $\alpha\in\O_v, \ y_v=(\begin{smallmatrix}1 & \\ & \varpi_v^{d_v}\end{smallmatrix})$ and $\eps=\det A\in\O_F^\times$. If we now study the $\p$-valuation we obtain the following conditions
\begin{enumerate}
\item $2\val_\p(a')+m_v'+t_v\geq 0$,
\item $2\val_\p(c')+m_v'+t_v\geq n_v$.
\end{enumerate}
On rearranging these conditions we obtain \[m_v'\geq-d_v-t_v-2\val_\p(a'), \ m_v'\geq -t_v-d_v+n_v-2\val_\p(c').\] Hence \[m_v'\geq -t_v-d_v+\max\{-2\val_\p(a'), n_v-2\val_\p(c')\}.\] Let \[w_v(\sigma,\n)=\max\{-2\val_\p(a'), n_v-2\val_\p(c')\}=n_v-\min\{2\val_\p(a')+n_v,2\val_\p(c')\}.\] Then let $\mathfrak{w}_v(\sigma,\n)=\varpi_v^{w_v(\sigma,\n)}\O_v$ and $\mathfrak{w}(\sigma,\n)$ the corresponding ideal in $\O_F$, so \[\mathfrak{w}(\sigma,\n)=\n(\gcd((a')^2\n,(c')^2))^{-1}.\] This leads to $f_\mu||_k\sigma$ having a Fourier expansion of the form \[f_\mu||_k\sigma(z)=\sum_{\xi\in((t_\mu\O_F)\mathfrak{w}(\sigma,\n)^{-1})_+} a_\mu(\xi;\sigma)e(\Tr(\xi z)).\] We define 
\begin{equation}\label{FC}
c_\mu(\xi;f_\mu||_k\sigma)=N(t_\mu\O_F)^{-k_0/2}a_\mu(\xi;\sigma)\xi^{(k_0\mathbf{1}-k)/2},
\end{equation}
recall $k_0=\max\{k_1,...,k_n\}$ and $k_0\mathbf{1}=(k_0,...,k_0)$.

\begin{Remark}\label{dash}
Note from \eqref{sigma} we can view a matrix $\sigma=\smatrix\in\Gamma_\mu(1)$ as \[x_{\mu,v}(\begin{smallmatrix}1 & \\ & \varpi_v^{d_v}\end{smallmatrix})(\begin{smallmatrix}a' & b'\\ c' & d' \end{smallmatrix}) (\begin{smallmatrix}1 & \\ & \varpi_v^{-d_v}\end{smallmatrix})x_{\mu,v}^{-1},\] with $(\begin{smallmatrix}a' & b'\\ c' & d' \end{smallmatrix})\in\GL_2(\O_F)$. Therefore we have that \[\sigma_v=\begin{pmatrix} a_v & b_v \\ c_v & d_v \end{pmatrix}=\begin{pmatrix} a'_v & b'_v \\ \varpi_v^{t_v+d_v} c'_v & d'_v \end{pmatrix}.\]
\end{Remark}

\begin{Prop}\label{prop}
Let $\mathfrak{w}(\sigma,\n)$ be the ideal of $\O_F$ defined by \[\mathfrak{w}(\sigma,\n)=\n(\mathrm{gcd}((a')^2\n,(c')^2))^{-1}.\] Let $\xi\in F^\times$. Then
\[W_{\phi}(a(\xi)g_z\iota_{\emph{f}}(\sigma^{-1}) x_\mu)=
\begin{cases}
y^{k/2}a_\mu(\xi;\sigma)e(\Tr(\xi z)), \ \text{if} \ \xi\in \big((t_\mu\O_F)\mathfrak{w}(\sigma,\n)^{-1}\big)_+  \\
0, \ \text{otherwise}.
\end{cases}\] 
\end{Prop}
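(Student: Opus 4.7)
The plan is to deduce Proposition \ref{prop} as a direct application of Lemma \ref{general} to the right-translated automorphic form
\[
\phi' := R\bigl(\iota_{\mathrm{f}}(\sigma^{-1})x_\mu\bigr)\phi, \qquad \phi'(g) := \phi\bigl(g\,\iota_{\mathrm{f}}(\sigma^{-1})x_\mu\bigr).
\]
The first observation I would record is that the global Whittaker function behaves equivariantly under right translation by elements of $\GL_2(\A_{F,\mathrm{fin}})$: unwinding the defining integral \eqref{Def}, one obtains $W_{\phi'}(g) = W_\phi\bigl(g\,\iota_{\mathrm{f}}(\sigma^{-1})x_\mu\bigr)$, so that the quantity in the statement equals $W_{\phi'}(a(\xi)g_z)$. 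This reduces the proposition to understanding $W_{\phi'}$.

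Next I would verify that $\phi'$ fits the hypotheses of Lemma \ref{general}. Properties (i) and (iii) of Section \ref{dictionary} are untouched by right translation through a purely finite-adelic element, since they concern the $K_\infty$-isotypic behaviour and the Casimir eigenvalue. The content is the required right-invariance under $n(\varpi_v^{m_v}\mathcal{O}_v)$. But this is precisely the content of the computation carried out between \eqref{sigma} and the statement of the proposition: we showed there that $n(\varpi_v^{m'_v}\mathcal{O}_v)\in \iota_{\mathrm{f}}(\sigma^{-1})x_{\mu,v}K_v(\n_v)x_{\mu,v}^{-1}\iota_{\mathrm{f}}(\sigma)$ as soon as $m'_v\geq -t_v-d_v+w_v(\sigma,\n)$. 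Consequently, $\phi'$ is right-invariant under $n(M_v)$ with $M_v=\varpi_v^{-t_v-d_v+w_v(\sigma,\n)}\mathcal{O}_v$; only finitely many $v$ contribute non-trivially, so this defines a fractional ideal $M$ of $\O_F$.

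With these hypotheses in place, Lemma \ref{general} applies to $\phi'$. The associated holomorphic function is
\[
h(z) = j(g_z,\mathbf{i})^k\phi'(g_z) = y^{k/2}\phi(g_z\,\iota_{\mathrm{f}}(\sigma^{-1})x_\mu) = f_\mu||_k\sigma(z),
\]
by the dictionary recalled before Remark \ref{dash}. Its Fourier coefficients are therefore the $a_\mu(\xi;\sigma)$. The lemma provides the expansion over $M^{-1}\mathfrak{D}_F^{-1}$, and a place-by-place computation gives
\[
\bigl(M^{-1}\mathfrak{D}_F^{-1}\bigr)_v = \varpi_v^{t_v+d_v-w_v(\sigma,\n)-d_v}\mathcal{O}_v = \varpi_v^{t_v-w_v(\sigma,\n)}\mathcal{O}_v = \bigl((t_\mu\O_F)\mathfrak{w}(\sigma,\n)^{-1}\bigr)_v,
\]
so $M^{-1}\mathfrak{D}_F^{-1} = (t_\mu\O_F)\mathfrak{w}(\sigma,\n)^{-1}$.

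The last point is the restriction to totally positive $\xi$. This is not part of the conclusion of Lemma \ref{general}, but it is automatic in our situation because $f_\mu||_k\sigma$ is a Hilbert cuspform for the congruence subgroup $\sigma^{-1}\Gamma_\mu(\n)\sigma$ (as $\f$ is cuspidal and cuspidality is preserved by the $||_k$-action); hence $a_\mu(\xi;\sigma)=0$ unless $\xi$ is totally positive. Plugging the Fourier expansion into the formula provided by Lemma \ref{general} yields exactly the claimed dichotomy for $W_\phi(a(\xi)g_z\iota_{\mathrm{f}}(\sigma^{-1})x_\mu)$. I do not anticipate a genuine obstacle: the representation-theoretic heart of the argument is Lemma \ref{general}, and the bookkeeping identifying the correct ideal $\mathfrak{w}(\sigma,\n)$ was essentially done in the computation preceding the proposition.
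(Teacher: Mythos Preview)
Your proposal is correct and follows exactly the approach of the paper: apply Lemma \ref{general} to the right-translate $\phi'=\Pi(\iota_{\mathrm f}(\sigma^{-1})x_\mu)\phi$ and use $W_{\phi'}(g)=W_\phi(g\,\iota_{\mathrm f}(\sigma^{-1})x_\mu)$; the paper's proof is just the two-line version of what you wrote. One trivial slip: in your displayed chain for $h(z)$ the middle term should be $y^{-k/2}\phi(g_z\,\iota_{\mathrm f}(\sigma^{-1})x_\mu)$ rather than $y^{k/2}$, but your final identification $h=f_\mu||_k\sigma$ is correct regardless.
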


\begin{proof}
This follows from applying Lemma \ref{general} to the automorphic form \[\phi'=\Pi(\iotaf x_\mu)\phi\] and using the fact $W_{\phi}(a(\xi)g_z\iotaf x_\mu)=W_{\phi'}(a(\xi)g_z)$.
\end{proof}

\section{Main result}\label{MR}

In this section, we prove our main result of this chapter. We do this by applying a similar method to that used in the proof of Theorem \ref{mf2}.

In Proposition \ref{prop} we showed that $a_\mu(\xi;\sigma)$ can be written in terms of the global Whittaker function. Recall that the global Whittaker function can be broken up into a product of local Whittaker functions. This means that finding sufficient conditions for $\tau$ to fix $c_\mu(\xi;f_\mu||_k\sigma)$, boils down to finding sufficient conditions such that $\tau$ fixes the local Whittaker newforms. With this in mind, we have the following decomposition 
\begin{equation}\label{decom}
W_\phi(g)=C\prod_{v|\infty}W_\infty(g_\infty)\prod_{v<\infty}\Wv(g_v),
\end{equation}
where $C$ is a constant. This decomposition is due to the uniqueness of Whittaker functionals. We are able to evaluate $W_\infty$ via, 
\begin{equation}\label{Winfty}
W_\infty(a(\xi)g_z)=(\xi y)^{k/2}e(\Tr(\xi z)),
\end{equation}
see for example \cite[Section 3]{Sah16}. We now wish to compute the constant $C$. To do this let $g_\infty=g_z$ and $g_v=x_{1,v}$ for every $v<\infty$. Then using Proposition \ref{prop} and the fact that $\f$ is normalised, that is, $a_\mu(1;1)=1$ we have \[W_\phi(g_zx_1)=y^{k/2}a_1(1;1)e(\Tr(z))=y^{k/2}e(\Tr(z)).\] On the other hand using (\ref{decom}), (\ref{Winfty}) and the fact that $W_v(1)=1$ we obtain \[W_\phi(g_zx_1)=Cy^{k/2}e(\Tr(z)).\] Thus $C=1$.

\begin{Prop}\label{cmuprop}
Let $c_\mu(\xi;f_\mu||_k\sigma)$ be defined as in \eqref{FC}. Then  
\begin{equation}\label{cmu}
c_\mu(\xi;f_\mu||_k\sigma)=N(t_\mu\O_F)^{-k_0/2}\xi^{k_0\mathbf{1}/2}\prod_{v<\infty}\Wv(a(\xi)\iotaf x_{\mu,v}).
\end{equation}
\end{Prop}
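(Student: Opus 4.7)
The plan is to combine Proposition \ref{prop} with the factorisation of the global Whittaker function into local Whittaker functions, and then unwind the definition of $c_\mu(\xi;f_\mu||_k\sigma)$ from \eqref{FC}.

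First I would invoke Proposition \ref{prop} at the element $a(\xi)g_z\iotaf x_\mu\in\GL_2(\AF)$. For $\xi\in((t_\mu\O_F)\mathfrak{w}(\sigma,\n)^{-1})_+$ this gives
\[
W_\phi(a(\xi)g_z\iotaf x_\mu)=y^{k/2}a_\mu(\xi;\sigma)e(\Tr(\xi z)).
\]
Next I would use the Whittaker factorisation \eqref{decom}, whose constant $C$ has already been computed to be $1$ in the paragraph following \eqref{Winfty}. Since $\iotaf x_\mu$ is trivial at every archimedean place and $g_z$ is trivial at every finite place, while $a(\xi)$ is diagonal and factors componentwise, the factorisation reads
\[
W_\phi(a(\xi)g_z\iotaf x_\mu)=W_\infty(a(\xi)g_z)\prod_{v<\infty}\Wv(a(\xi)\iotaf x_{\mu,v}).
\]
Substituting the explicit archimedean formula \eqref{Winfty}, namely $W_\infty(a(\xi)g_z)=(\xi y)^{k/2}e(\Tr(\xi z))$, and equating the two expressions for $W_\phi(a(\xi)g_z\iotaf x_\mu)$, the factor $y^{k/2}e(\Tr(\xi z))$ cancels and I obtain
\[
a_\mu(\xi;\sigma)=\xi^{k/2}\prod_{v<\infty}\Wv(a(\xi)\iotaf x_{\mu,v}).
\]
Plugging this into the definition \eqref{FC} of $c_\mu(\xi;f_\mu||_k\sigma)$, the exponents combine as $k/2+(k_0\mathbf{1}-k)/2=k_0\mathbf{1}/2$, giving exactly \eqref{cmu}.

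The argument is essentially bookkeeping, so there is no deep obstacle; the one point requiring a little care is justifying that both sides of \eqref{cmu} vanish when $\xi$ lies outside the cone $((t_\mu\O_F)\mathfrak{w}(\sigma,\n)^{-1})_+$. On the left this is immediate from the Fourier expansion of $f_\mu||_k\sigma$, while on the right one notes that Proposition \ref{prop} forces $W_\phi(a(\xi)g_z\iotaf x_\mu)=0$ for such $\xi$; since $W_\infty(a(\xi)g_z)$ is non-vanishing for $\xi\in F^\times$ and $z\in\H^n$, the finite product must itself vanish, so both sides of \eqref{cmu} are zero and the identity holds universally.
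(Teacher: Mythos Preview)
Your proof is correct and follows essentially the same route as the paper: invoke Proposition~\ref{prop}, factor the global Whittaker function via \eqref{decom} with $C=1$, insert the archimedean value \eqref{Winfty}, cancel $y^{k/2}e(\Tr(\xi z))$ to obtain $a_\mu(\xi;\sigma)=\xi^{k/2}\prod_{v<\infty}\Wv(a(\xi)\iotaf x_{\mu,v})$, and then substitute into \eqref{FC}. Your additional remark about the vanishing case is a nice touch the paper omits; note however that for $\xi$ not totally positive the archimedean Whittaker function itself vanishes, so the non-vanishing argument for $W_\infty$ only covers totally positive $\xi$ outside the lattice support---but this suffices since both sides are understood to be zero for non-totally-positive $\xi$ anyway.
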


\begin{proof}
From Proposition \ref{prop} and evaluating $W_\infty$ we have that 
\begin{align*}
a_\mu(\xi;\sigma)&=y^{-k/2}e(-\Tr(\xi z))W_{\phi}(a(\xi)g_z\iotaf x_\mu)\\
&=y^{-k/2}e(-\Tr(\xi z))(\xi y)^{k/2}e(\Tr(\xi z))\prod_{v<\infty}\Wv(a(\xi)\iotaf x_{\mu,v})\\
&=\xi^{k/2}\prod_{v<\infty}\Wv(a(\xi)\iotaf x_{\mu,v}).
\end{align*}
Recall from (\ref{FC}) we have \[c_\mu(\xi;f_\mu||_k\sigma)=N(t_\mu\O_F)^{-k_0/2}\xi^{(k_0\mathbf{1}-k)/2}a_\mu(\xi;\sigma).\] Therefore \[c_\mu(\xi;f_\mu||_k\sigma)=N(t_\mu\O_F)^{-k_0/2}\xi^{k_0\mathbf{1}/2}\prod_{v<\infty}\Wv(a(\xi)\iotaf x_{\mu,v}).\] 
\end{proof}

Let $\f=(f_1,...,f_h)$ be a normalised cuspidal Hilbert newform with weight $k=(k_1,...,k_n)$ such that $k_1\equiv...\equiv k_n \pmod{2}$. We let $\Q(\f)$ denote the number field generated by $c_\mu(\xi;f_\mu)$ as $\xi$ varies over $F$ and $\mu$ varies over $1\leq\mu\leq h$. Then for $\f, 1\leq\mu\leq h, \ \text{and}\ \sigma\in\Gamma_\mu(1)$ we let $\Q(\f,\mu,\sigma)$ denote the field generated by $c_\mu(\xi;f_\mu||_k\sigma)$ as $\xi$ varies over $F$. Note that $\Q(\f)$ is the compositum of the fields $\Q(f,\mu,1)$ as $\mu$ varies over $1\leq\mu\leq h$.

We are now able to state and prove the main result of this chapter.

\begin{Thm}\label{Thm}
Let $\f=(f_1,...,f_h)$ be a normalised cuspidal Hilbert newform of level $\n$ and weight $k=(k_1,...,k_n)$ with $k_1\equiv...\equiv k_n \pmod{2}$. Let $1\leq\mu\leq h$ and $\sigma=\smatrix\in\Gamma_\mu(1)$.  Let $\n'$ be the integral ideal of $\O_F$ such that $\n'(\n+cdt_\mu^{-1}\mathfrak{D}_F^{-1})=\n$. Then $\Q(\f,\mu,\sigma)$ lies in the number field $\Q(\f)(\zeta_{N_0})$ where $N_0$ is the integer such that $N_0\Z=\n'\cap\Z$.
\end{Thm}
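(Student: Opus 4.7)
The plan is to extend the Whittaker-theoretic strategy of Theorem \ref{mf2} using the factorisation in Proposition \ref{cmuprop}. Fix $\tau \in \Aut(\C)$ that fixes $\Q(\f)(\zeta_{N_0})$; it suffices to show that $\tau$ fixes $c_\mu(\xi; f_\mu||_k\sigma)$ for every admissible $\xi$. Two consequences of the hypothesis will drive the argument. First, since $\tau$ fixes $\Q(\f)$ and these coefficients determine the newform, $\f^\tau = \f$, so Lemma \ref{relation} gives ${}^{\tau}(\Pi_v\otimes|\cdot|_v^{k_0/2}) \cong \Pi_v\otimes|\cdot|_v^{k_0/2}$ at every finite $v$, making \eqref{action} applicable. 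Second, since $\tau$ fixes $\zeta_{N_0}$, the cyclotomic character satisfies $\alpha_\tau \equiv 1 \pmod{N_0\hat{\Z}}$; combined with $N_0 \in \n'\cap\Z$, this forces $\alpha_{\tau,v} - 1 \in \n'_v$ at every finite place $v$.

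The next step is to recast Proposition \ref{cmuprop} so that the scalar prefactor is manifestly $\tau$-invariant. Set $g_v := a(\xi)\iota_\text{f}(\sigma^{-1})x_{\mu,v}$, so $\det g_v = \xi \eps^{-1}(t_\mu)_v$ with $\eps := \det\sigma \in \O_F^\times$, and the product formula together with the definition of $(t_\mu\O_F)$ gives $\prod_{v<\infty}|\det g_v|_v = N(\xi)^{-1}N(t_\mu\O_F)^{-1}$. Introducing the twisted Whittaker functions $\widehat{W}_{\Pi_v}(g) := W_{\Pi_v}(g)|\det g|_v^{k_0/2}$, Proposition \ref{cmuprop} rewrites as
\[
c_\mu(\xi; f_\mu||_k\sigma) \;=\; N(\xi)^{k_0}\prod_{v<\infty}\widehat{W}_{\Pi_v}(g_v),
\]
whose prefactor is a rational number and therefore $\tau$-invariant. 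Because $|\det k|_v = 1$ for $k \in K_v(\n_v)$, the $\widehat{W}_{\Pi_v}$ inherit right-invariance under $K_v(\n_v)$ from the local newforms, while \eqref{action} translates into $\tau(\widehat{W}_{\Pi_v}(g_v)) = \widehat{W}_{\Pi_v}(a(\alpha_\tau)g_v)$. The whole theorem then reduces to the local assertion
\[
g_v^{-1}a(\alpha_\tau)g_v \;=\; x_{\mu,v}^{-1}\sigma_v\, a(\alpha_\tau)\,\sigma_v^{-1}x_{\mu,v} \;\in\; K_v(\n_v)
\]
at every finite $v$.

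To verify this local claim I would expand the explicit $2\times 2$ matrix. The determinant equals $\alpha_{\tau,v}\in\O_v^\times$; the upper-right entry $\eps^{-1}(t_\mu)_v ab(1-\alpha_{\tau,v})$ lies in $\D_v^{-1}$ and the lower-right entry $\eps^{-1}(ad - bc\alpha_{\tau,v})$ lies in $\O_v$ by direct inspection using $b\in(t_\mu)^{-1}\D_F^{-1}$ and $c\in t_\mu\D_F$. The lower-left entry $\eps^{-1}(t_\mu)_v^{-1}cd(\alpha_{\tau,v}-1)$ has valuation at least $v(\mathfrak{a})+d_v+v(\n') \geq v(\n)+d_v$, where $\mathfrak{a} := cd\,t_\mu^{-1}\D_F^{-1}$; the last inequality is exactly the defining relation $\n'\mathfrak{a}\subseteq\n$ of $\n'$. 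The step I expect to be the main obstacle is verifying that the upper-left entry $1 + \eps^{-1}ad(\alpha_{\tau,v}-1)$ is a unit modulo $\n_v$ at places $v\mid\n$ with $v\nmid\n'$, where $\n'_v = \O_v$ leaves $\alpha_{\tau,v}-1$ unconstrained. The resolution should be a short case analysis using $v(\mathfrak{a})\geq v(\n)$: writing $v(c) = v(t_\mu)+d_v+\gamma$ and $v(d)=\delta$ with $\gamma,\delta\geq 0$, the unit condition $ad - bc = \eps$ forces either $\gamma = 0$ or $\delta = 0$, and the bound $\gamma+\delta \geq v(\n)$ then yields either $d \in \n_v$ (so upper-left $\equiv 1 \pmod{\n_v}$) or $bc \in \n_v$ (so upper-left $\equiv \alpha_{\tau,v} \pmod{\n_v}$) --- both units. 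Once this is in hand, right-invariance gives $\widehat{W}_{\Pi_v}(a(\alpha_\tau)g_v) = \widehat{W}_{\Pi_v}(g_v)$ at each $v$, and taking the product over $v$ shows $\tau(c_\mu(\xi; f_\mu||_k\sigma)) = c_\mu(\xi; f_\mu||_k\sigma)$.
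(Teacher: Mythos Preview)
Your proposal is correct and follows essentially the same Whittaker-theoretic strategy as the paper: reduce to the local assertion that $x_{\mu,v}^{-1}\sigma\,a(\alpha_\tau)\,\sigma^{-1}x_{\mu,v}\in K_v(\n_v)$, then verify the entrywise conditions using $\alpha_\tau-1\in\n'_v$. The only differences are cosmetic---the paper passes through the change of coordinates \eqref{sigma} to $A=\smatrix'\in\GL_2(\O_F)$ before computing, and handles the $|\det|_v^{k_0/2}$ factors by forming the ratio $\tau(c_\mu)/c_\mu$ and invoking the product formula rather than your repackaging via $\widehat{W}_{\Pi_v}$; in fact your case analysis for the upper-left entry at places $v\mid\n$, $v\nmid\n'$ makes explicit a verification the paper leaves implicit.
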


\begin{proof}
Let $\tau\in\Aut(\C)$ fix $\Q(\f)(\zeta_{N_0})$ where $N_0$ is the unique positive integer such that \[N_0\Z=\n\big(\gcd(cdt_\mu^{-1}\mathfrak{D}_F^{-1},\n)\big)^{-1}\cap\Z.\] Thus specifically $\tau$ fixes $\Q(\f)$ and all the $\zeta_{N_0}$ roots of unity.

From the fact that $\tau$ fixes $\Q(\f)$ and using \cite[Proposition 2.6]{Shi78}, multiplicity one and Lemma \ref{relation} we have that $\f=\f^\tau$. Therefore, using Lemma \ref{relation} we have \[\Pi_v\otimes| \ |_v^{k_0/2}\cong{}^{\tau}(\Pi_v\otimes| \ |_v^{k_0/2}).\] So, by (\ref{action}), we have that for each non-archimedean place $v$
\begin{equation}\label{tauWv}
\tau(\Wv(a(\xi)\iotaf x_{\mu,v}))\tau\big((|\xi|_v|t_\mu|_v)^{k_0/2}\big)=\Wv(a(\alpha_\tau)a(\xi)\iotaf x_{\mu,v})(|\xi|_v|t_\mu|_v)^{k_0/2}.
\end{equation}

Since all the $N_0$ roots of unity are fixed by $\tau$ we have that \[\alpha_\tau\equiv1 \pmod{\n_v\big(\gcd(c_vd_v\varpi_v^{-t_v-d_v},\n_v)\big)^{-1}},\] where $c_v$ and $d_v$ are the $v$ parts of $c$ and $d$ respectively. Consider the product \[x_\mu^{-1} \begin{pmatrix} a & b \\ c & d \end{pmatrix} \begin{pmatrix} \alpha_\tau & \\ & 1 \end{pmatrix} \begin{pmatrix} a & b \\ c & d \end{pmatrix}^{-1}x_\mu.\] On using \eqref{sigma} we have this product is equal to 
\begin{equation}\label{product}
\begin{pmatrix} 1 & \\ & \varpi_v^{d_v}\end{pmatrix}\begin{pmatrix} a' & b' \\ c' & d' \end{pmatrix}\begin{pmatrix} 1 & \\ & \varpi_v^{-d_v} \end{pmatrix}\begin{pmatrix} \alpha_\tau & \\ & 1 \end{pmatrix} \begin{pmatrix} 1 & \\ & \varpi_v^{d_v} \end{pmatrix} \begin{pmatrix} a' & b' \\ c' & d' \end{pmatrix}^{-1} \begin{pmatrix} 1 & \\ & \varpi_v^{-d_v} \end{pmatrix}.
\end{equation}
On evaluating \eqref{product} we have \[\begin{pmatrix}\eps^{-1}(a'd'\alpha_\tau-b'c') & \eps^{-1}a'b'(1-\alpha_\tau)\varpi_v^{-d_v}\\ \eps^{-1}c'd'(\alpha_\tau-1)\varpi_v^{d_v} & \eps^{-1}(a'd'-b'c'\alpha_\tau)\end{pmatrix}, \ \text{where} \ \eps=\det(\begin{smallmatrix}a' & b' \\ c' & d' \end{smallmatrix}).\] We want to show that this matrix is an element of $K_v(\n)$. Therefore, we need the following conditions to be satisfied 
\begin{enumerate}
\item $\eps^{-1}a'b'(1-\alpha_\tau)\varpi_v^{-dv}\in\mathfrak{D}_v^{-1}$,
\item $\eps^{-1}c'd'(\alpha_\tau-1)\varpi_v^{d_v}\in \n_v\mathfrak{D}_v$,
\item $\eps^{-1}(a'd'-b'c'\alpha_\tau)\in\O_v$.
\end{enumerate}
Recall that $a',b',c'$ and $d'$ are elements of $\O_F$ and $\eps^{-1}\in\O_F^\times$ so conditions (1) and (3) are satisfied with no additional assumptions on $\alpha_\tau$. Now consider condition (2). From the fact we have \[\alpha_\tau\equiv1 \pmod{\n_v\big(\gcd(c_vd_v\varpi_v^{-t_v-d_v},\n_v)\big)^{-1}}\] which is equivalent to \[\alpha_\tau\equiv1 \pmod{n_v\big(\gcd(c'_vd'_v,\n_v)\big)^{-1}},\] using Remark \ref{dash}. Thus, we have that condition (2) is also satisfied. Therefore \[\begin{pmatrix}\eps^{-1}(a'd'\alpha_\tau-b'c') & \eps^{-1}a'b'(1-\alpha_\tau)\varpi_v^{-dv}\\ \eps^{-1}c'd'(\alpha_\tau-1)\varpi_v^{d_v} & \eps^{-1}(a'd'-b'c'\alpha_\tau)\end{pmatrix}\in K_v(\n).\] Recall that in general for $W_v(g_1)=W_v(g_2)$ it suffices that $g_1^{-1}g_2\in K_v(\n)$. Therefore we have that 
\begin{equation}\label{Wvfin}
\Wv(a(\xi)\iotaf x_{\mu,v})=\Wv(a(\alpha_\tau)a(\xi)\iotaf x_{\mu,v}),
\end{equation}
for every finite place. Combining (\ref{tauWv}) and (\ref{Wvfin}), we have 
\begin{equation}\label{fraction}
\frac{\tau\big(\Wv(a(\xi)\iotaf x_{\mu,v})\big)}{\Wv(a(\xi)\iotaf x_{\mu,v})}=\frac{|\xi t_\mu|_v^{k_0/2}}{\tau(|\xi t_\mu|_v)^{k_0/2}}.
\end{equation}
We now consider the quotient \[\frac{\tau\big(c_\mu(\xi;f_\mu||_k\sigma)\big)}{c_\mu(\xi;f_\mu||_k\sigma)}.\] To prove the result it now suffices to show this quotient is equal to one. Therefore, by (\ref{cmu}) and (\ref{fraction}) we have 
\begin{align*}
\frac{\tau\big(c_\mu(\xi;f_\mu||_k\sigma)\big)}{c_\mu(\xi;f_\mu||_k\sigma)}&=\frac{\tau\big(N(t_\mu\O_F)^{-k_0/2}\big)\tau(\xi^{k_0/2})}{N(t_\mu\O_F)^{-k_0/2}\xi^{k_0/2}}\prod_{v<\infty}\frac{|\xi t_\mu|_v^{k_0/2}}{\tau(|\xi t_\mu|_v)^{k_0/2}}\\
&=\frac{\tau\big(\prod_{v<\infty}|t_\mu|_v^{k_0/2}\big)}{\prod_{v<\infty}|t_\mu|_v^{k_0/2}}\frac{\tau\big(\prod_{v|\infty}|\xi|_v^{k_0/2}\big)}{\prod_{v|\infty}|\xi|_v^{k_0/2}}\frac{\big(\prod_{v<\infty}|t_\mu|_v|\xi|_v\big)^{k_0/2}}{\tau\big(\prod_{v<\infty}|t_\mu|_v|\xi|_v\big)^{k_0/2}}\\
&=\frac{\tau\big(\prod_{v|\infty}|\xi|_v^{k_0}\big)}{\prod_{v|\infty}|\xi|_v^{k_0}}, \ \text{using global norm formula}.
\end{align*}
We have that $\prod_{v|\infty}|\xi|_v=|N(\xi)|\in\Q$. Hence \[\frac{\tau\big(\prod_{v|\infty}|\xi|_v^{k_0}\big)}{\prod_{v|\infty}|\xi|_v^{k_0}}=\frac{\prod_{v|\infty}|\xi|_v^{k_0}}{\prod_{v|\infty}|\xi|_v^{k_0}}=1.\] Therefore, we have that $\tau$ fixes $c_\mu(\xi;f_\mu||_k\sigma)$ and so \[c_\mu(\xi;f_\mu||_k\sigma)\in\Q(\f)(\zeta_{N_0}).\]
\end{proof}

\begin{Remark}\label{special}
Let $F=\Q$, then we have narrow class group beginning trivial and so $t_\mu=1$. We also have that $\n=N\Z$ and $\D_\Q^{-1}=\Z$. Therefore \[N_0\Z=\n\bigg(\gcd(cd\Z,N\Z)\bigg)^{-1}\cap \Z=\frac{N}{(cd,N)}\Z.\] This is exactly the cyclotomic extension given by Brunault and Neururer in \cite[Theorem 4.1]{FEatC}.
\end{Remark}

\begin{Question}
Our method to prove Theorem \ref{mf2} and Theorem \ref{Thm} was to find sufficient conditions, not necessary ones. Therefore one can ask if the number field $\Q(\f)(\zeta_{N_0})$ is optimal, that is, do we have $\Q(\f,\mu,\sigma)=\Q(\f)(\zeta_{N_0})$?

Note that if $F=\Q$ this is known to be true \cite[Theorem 7.6]{FEatC}, so one can restrict their attention to the case of $n>1$.
\end{Question}

\begin{Question}
It would be interesting to generalise the results in this chapter to the setting of Hilbert newforms with non-trivial central character and specifically what will the number field be.

One can also ask the question of what happens for automorphic forms over different groups, for example, Siegel modular forms. As stated in the introduction one would suspect that a similar method used here would work in that case.
\end{Question}

In the next chapter, we give an outline of possible strategies about how one might go about answering some of these questions. 


\chapter{Conclusions and Future Work}\label{Chap4}
In this chapter, the main conclusions drawn from Chapters \ref{Chap2} and \ref{Chap3} are discussed and how these relate to the aim of our thesis, mentioned in Chapter \ref{Chap1}. Several directions for further work are outlined, which are based on the work carried out in this thesis. 

\section{Conclusions}

This thesis aimed to gain a better understanding of the arithmetic properties of the Fourier coefficients of modular forms via using the adelic representation theory. In Chapter \ref{Chap2}, we were focussed on trying to understand more about the $p$-adic valuation of local Whittaker newforms with non-trivial central character. For Chapter \ref{Chap3}, we change direction slightly and focus our attention on Hilbert modular forms and their Fourier coefficients. 

At the heart of all the work carried out in this thesis is the interplay between the classical and adelic approaches. Moving to the adelic side allows us to tackle problems which if we were working classically we would get little traction. Moreover, it enables us to build a general method that can be adapted to different settings, as we will see in the next section. 

In Chapter \ref{Chap2}, we focussed purely on the local representation side and we were able to generalise what is known to the case of $\cc\in\X$. This relied on using the local Fourier expansion of $W_\pi$ and knowing explicitly what the Fourier coefficients are for different representations $\pi$. All of this accumulated in being able to prove Theorem \ref{api2} and \ref{apiplus}. These theorems along with \eqref{af} could then be used to understand more about the Fourier coefficients of modular forms at cusps. We give an outline of how one can do this in the next section. The method we have used to prove our theorems also gives a possible way to obtain similar results for different groups. 

As in Chapter \ref{Chap2}, a majority of the work in Chapter \ref{Chap3} has been investigating properties of the local Whittaker newform.  In this chapter specifically, our focus was twofold. The first was studying the Galois action on the local Whittaker newform and the second was constructing an explicit formula relating this local (adelic) object to the global (classical) object. We then have all the tools to be able to prove the main result of this chapter namely, Theorem \ref{Thm}.

All of this work fits into the general framework of automorphic representation theory. It is done, in the hope that at least in some small part it has increased our understanding of newforms and the objects associated to newforms.  Which in turn, could help us to answer some of the important questions related to newforms. 

\section{Future work}

In this section, we provide some possible further work that builds on the work carried out in this thesis. 

\subsection{Unanswered questions}\label{unQ}

Here, we briefly discuss the unanswered questions raised in the thesis in a little more detail. This discussion is more of a heuristic on how one might go about tackling the problems raised, rather than working through all of the details.  

\begin{Question}[$p=2$]

In Chapter \ref{Chap2}, we only focus on the case of $p$ being an odd prime. The main reason for this is due to time constraints as there are some technical issues. Although we could obtain more explicit (and perhaps better) bounds than in the case of $p$ odd there is a trade-off, this being that we could in most likelihood only deal with the case of $F=\Q_2$. These technical complications occur when $\pi$ is Type 1 as there appears to be no easy formula for $\eps(1/2,\chi\pi)$ and $\wtpi$ will not be dihedral. This means we need to explicitly work out the characters of $\X_{\Q_2}$ and the corresponding $\eps$-factors. 

For $p$ odd we have followed \cite{MCMD} one suspect that a similar generalisation would be a sensible approach. If we investigate the method they use, they explicitly work out how many non-dihedral representations exist. Then from there, they are able to establish the desired bounds for $\valp(W_\pi(\gtlv))$. For the other representations, they can prove the bounds in a similar way to that of $p$ odd, but computing these bounds are no easier than the case of $p$ odd. As such, the case of $p=2$ would constitute a non-trivial amount of work. 
\end{Question}

\begin{Question}[Global applications]

As mentioned in Chapter \ref{Chap2}, we can use Theorem \ref{api2} and \ref{apiplus} to prove results for $p$-adic valuations for the Fourier coefficients of modular forms at cusps. Here, we give an overview of how this method would go and some possible other applications. 

In our setting, $\pi_f$ relates to newforms $f$ on $\Gamma_1(N)$, since we have non-trivial central character. Recall that, for $\c=\sigma\infty$ we have a Fourier expansion of \[f|_k\sigma(z)=\sum_{n\geq1}\af e^{2\pi nz/\delta(\c)}.\] We define, \[\valp(f|_k\sigma):=\valp(f|_\c)=\inf_{n\geq1}\left\{\valp((\af))\right\}.\] Let $p$ be a fixed prime and $q$ a prime which varies. The local objects that we defined in Chapter \ref{Chap2} now have subscripts. In this global setting we have $N=\prod_qq^{n_q}$, with $n_q=c(\pi_q)$ and $M=\prod_qq^{m_q}$, with $m_q=c(\omega_{\pi_q})$. 
For $n=n_0\prod_{q|N}q^{n_q}$ and $L=\prod_{q|N}q^{l_q}$, then from \cite[Proposition 3.3]{ACAS} we have 
\begin{equation}\label{afc4}
\af=a_f(n_0)\frac{n^{k/2}}{n_0^{k/2}}e\left(\frac{nd}{\delta(\c)L}\right)\frac{\prod_{q|N}W_{\pi_q}(g_{n_q-\max\{n_q,l_q+m_q,2l_q\},l_q,v_q})}{\delta(\c)^{k/2}}.
\end{equation}
There are two cases to consider, the first being $p\nmid N$ and $p|N$.  In the first case we need to check if $\valp(f|_\c)\geq0$ as in \cite[Lemma 4.5]{MCMD} and \cite[12.3.5]{MFandMC}. If we now suppose we are in the second case. Then, rewriting \[\delta(\c)=\frac{L^2\cdot N\cdot L \cdot M}{L^2\gcd(L^2,N,LM)}=\frac{NLM}{\gcd(N,LM,L^2)}\] and taking $p$-adic valuation of $\eqref{afc4}$, gives
\begin{align*}
\valp(\af)&=\valp(a_f(n_0))+\valp\left(e\left(\frac{nd}{\delta(\c)L}\right)\right)+\valp\left(p^{\frac{k}{2}\valp(n)}\right)\\
& \quad -\frac{k}{2}\valp\Big(\frac{NLM}{\gcd(N,LM,L^2)}\Big)+\valp\big(\Wp(g_{n_p-\max\{n_p,l_p+m_p,2l_p\},l_p,v_p})\big).
\end{align*}
Now, we know that \[\valp\left(e\left(\frac{nd}{\delta(\c)L}\right)\right)=0 \ \text{and} \ \valp(a_f(n_0))\geq0.\] Therefore, 
\begin{align*}
\valp(\af)&=\frac{k}{2}\valp(n)-\frac{k}{2}\valp \left(\frac{NLM}{\gcd(N,LM,L^2)}\right)\\
& \quad +\valp\left(\Wp(g_{\valp(n)-\max\{\valp(N),\valp(L)+\valp(M),2\valp(L)\},\valp(L),v_p})\right).
\end{align*}
If we denote $\valp(n)$ by $u\in\Z_{\geq0}$. Then taking minimums over $u\in\Z_{\geq0}$ and $v_p\in\Z_p^\times$, we have 
\begin{align*}
\valp(f|_\c)&\geq-\frac{k}{2}\valp\left(\frac{NLM}{\gcd(N,LM,L^2)}\right)\\
&+\min_{u\in\Z_{\geq0},v_p\in\Z_p^\times}\left\{\frac{ku}{2}+\valp\left(\Wp(g_{u-\max\{\valp(N),\valp(L)+\valp(M),2\valp(L)\},\valp(L),v_p})\right)\right\}.
\end{align*}
So, we can see that to obtain lower bounds for $\valp(f|_\c)$ we need to evaluate \[\min_{u\in\Z_{\geq0},v_p\in\Z_p^\times}\left\{\frac{ku}{2}+\valp\big(\Wp(g_{u-\max\{\valp(N),\valp(L)+\valp(M),2\valp(L)\},\valp(L),v_p})\big)\right\}.\] We can do this by using the bounds in Theorem \ref{api2} and \ref{apiplus}. We compute this for the example of $\pi$ being of Type 2a and $c(\pi)=2$. In this case we have $c(\pi)=\valp(N)=2$ and $\valp(L)=1=\valp(M)$ so \[\max\left\{\valp(N),\valp(L)+\valp(M),2\valp(L)\right\}=2.\] For $F=\Qp$, Theorem \ref{api2} (ii) gives \[\valp\big(\Wp(g_{u-2,1,v_p})\big)\geq-(u-2+3)+\frac{1}{p-1}=-(u+1)+\frac{1}{p-1}.\] Thus, \[\frac{ku}{2}-(u+1)+\frac{1}{p-1}=\frac{u(k-2)-2}{2}+\frac{1}{p-1}.\] We can see that this is linear in $u$ and is increasing as $u$ increases. So, the minimum will occur at $u=0$. Thus, \[\valp(f|_\c)\geq-\frac{k}{2}\valp\Big(\frac{NLM}{\gcd(N,LM,L^2)}\Big)-1+\frac{1}{p-1}.\] Note that for us to use the results for $\pi$ being Type 3 we need to be able to compute $|\valp(p^{c_1})|$. To ensure this is well defined we need to show that $p^{\pm c_1+r}\in\overline{\Z}$, for some $r$, so that $|\valp(p^{c_1})|\leq r$. In \cite[Lemma 4.4]{MCMD} they show that $r=(k-1)/2$. Their method uses the Ramanujan-Peteresson conjecture at all finite places \cite[Theorem 1]{Bla06} and uses another explicit formula for the Whittaker newform \cite[Eq. 121]{PSS14}. We will need to do the same for our setting.  This seems like the most sensible method to prove this result in our setting. 

Finally, the main purpose of the author's work in \cite{MCMD} was to gain new results towards Manin's Constant.  This is about the surjection regarding the modularity of elliptic curves, see  \cite{Ces18} and \cite{manin}. They use the bounds on $\valp(f|_\c)$ in the specific case that $f$ is a newform of weight 2 and level $N_E$. The reason for this method is that they are able to prove better results than using just arithmetic geometry. An interesting question is can we do something similar for our results?  Or, if our bounds can be used to prove different results not just about $\valp(f|_\c)$. 
\end{Question}

\begin{Question}[Optimality]
At the end of Chapter \ref{Chap3} the question is raised as to whether or not the number field $\Q(\f)(\zeta_{N_0})$ is optimal, recall this means $\Q(\f,\mu,\sigma)=\Q(\f)(\zeta_{N_0})$. As also mentioned in Chapter \ref{Chap3} we know the result for modular forms is optimal this is proved in \cite[Theorem 7.6]{FEatC}. The way they prove this result is classical in nature and so would be difficult to reinterpret in the setting of Hilbert modular forms. 

One would suspect that this result is optimal, if not in general, then in the case of $\mu=1$. There are several different methods for investigating this idea. Possibly the most intuitive of these is establishing some computational evidence by taking examples from LMFDB and computing $\Q(\f,\mu,\sigma)$ and $\Q(\f)(\zeta_{N_0})$ for those examples. However, computing these Fourier coefficients explicitly is not an easy process and would require work. It also leaves the problem of still needing to prove the result if indeed no counterexamples are provided. 

Therefore, we turn to some possible other fruitful methods which are linked. In essence we want to find necessary conditions for $\tau\in\Aut(\C)$ to fix the product $c_\mu(\xi;f_\mu||_k\sigma)$. As such, one could try to re-engineer the proof of Theorem \ref{Thm} in such a way instead of finding sufficient conditions we obtain necessary ones. Doing this directly could be a challenge so it would seem a sensible strategy to first consider the setting of $F=\Q$. This strategy was useful in Chapter \ref{Chap3}, as it gave good insight into how one could tackle the problem for Hilbert modular forms and was less technical. This means if one could reprove \cite[Theorem 7.6]{FEatC} but using a more adelic approach, it could well give the desired intuition on how to show the result in general. 
\end{Question}

\subsection{Connecting Chapters \ref{Chap2} \& \ref{Chap3}}

At first glance the work in Chapters \ref{Chap2} and \ref{Chap3} are answering different questions and perhaps are not so connected (apart from the general overall setting). If we examine a little more closely though we see that there is a possible connection between these two chapters.

If we first look at the work in Chapter \ref{Chap2}, the formulas for $\ctl$ in Section \ref{formulas} are for any non-archimedean local field, specifically these results hold for $F_v$ with $F$ being a totally real number field and $v$ a finite place of $F$. 

Now, turning our attention to the work carried out in Chapter \ref{Chap3}. We see that we have proved a similar looking formula for $\af$ found in \cite[Proposition 3.3]{ACAS}, they show \[\af=\frac{a_f(n_0)}{n_0^{k/2}}e\bigg(\frac{rd}{\delta(\c)L}\bigg)\frac{\prod_{p|N}W_{\pi_p}(g_{n_p-d_{\pi_p}(l_p),l_p,v_p})}{\delta(\c)^{k/2}}.\] Comparing this to what we proved, recall \eqref{cmu}, we showed \[c_\mu(\xi;f_\mu||_k\sigma)=N(t_\mu\O_F)^{-k_0/2}\xi^{k_0\mathbf{1}/2}\prod_{v<\infty}W_v(a(\xi)\iotaf x_{\mu,v}).\] The main difference is at what values we are evaluating the local Whittaker newform at and over which places. The reason for this is that due to the work carried out in Chapter \ref{Chap3} we wanted to know about the Galois action on $\Wv$, so we left the input in a way that was most convenient for this. As such, we did not utilise the decomposition of $\GL_2(F)$. Indeed, even in \cite{ACAS} the authors do not fully utilise this decomposition until proposition 3.3, At the beginning of Section 3 they prove results for any $g\in\GL_2(F)$. The first thing that would be required is to use this decomposition for $\Wv$ and compute the corresponding $t,l$ and $v$ in this case. Once we have obtained the equivalent result we will need to go back and work through the results in Chapter \ref{Chap2} which would need adjusting. 

\subsection{Siegel modular forms}

All the work carried out in this thesis has been for $\GL_2$. If we look at $\GL_n$ with $n\geq3$ then the situation is very different, specifically, there are no holomorphic forms for $\GL_n$. Since all of our work has been studying holomorphic forms we need to find a different generalisation that gives holomorphic forms.  The correct setting is the symplectic group, $\text{GSp}_{2n}$, instead of $\GL_n$. Let $n\in\mathbb{N}$ and $R$ a commutative ring, then \[\text{GSp}_{2n}(R):=\big\{g\in\GL_{2n}(R):g^TJg=\mu(g)\in R^\times, J=(\begin{smallmatrix} 0_n & I_n \\ -I_n & 0_n \end{smallmatrix})\big\},\] where $\mu(g):\text{GSp}_{2n}(R)\rightarrow R^\times$ is a homomorphism called the multiplier. We have the subgroup $\text{Sp}_{2n}(R):=\{g\in\text{GSp}_{2n}(R):\mu(g)=1\}$. 

The Siegel upper half plane $\H_n$ is defined by \[\H_n=\{Z\in M_n(\C):Z^T=Z, \ \Ima Z>0\}.\]We can consider an action of \[\text{GSp}_{2n}^+(\R)=\{g\in\text{GSp}_{2n}(\R):\mu(g)>0\}\] on $\H_n$ given in an analogous way to the previous settings, specifically \[g\cdot Z=(AZ+B)(CZ+D)^{-1}, \ g=(\begin{smallmatrix} A & B \\ C & D \end{smallmatrix})\in\text{GSp}_{2n}^+(\R).\] Now, for simplicity we restrict our attention to $\GSp_4$.  Let $N\geq1$, define \[\Gamma^{(2)}(N)=\{g\in\Sp_4(\Z):g\equiv I_4 \pmod{N}\}.\] Then $\Gamma<\Sp_4(\Q)$, is a congruence subgroup if it contains $\Gamma^{(2)}(N)$ for some $N$, with finite index. Let $F:\H_4\rightarrow\C$ be holomorphic then $F$ is a Siegel modular form if $F|_k\gamma=F$, for every $\gamma\in\Gamma$. In the case that $\Gamma=\Sp_4(\Z)$ then $F$ has a Fourier expansion of the form \[F(Z)=\sum_{\substack{M=M^T\\ M \ \text{half integral}}}A_F(T)e^{2\pi i \Tr(MZ)}.\] One can ask the same questions as we have done for Chapters \ref{Chap2} and \ref{Chap3}.  One can attempt to use a similar method of turning the classical question into one using adelic representation theory. We have the same relations as we described in Section \ref{ctoa} for Siegel modular forms. The main difference is if $\pi_F$ is the cuspidal automorphic representation associated to $F$ then $\pi_F$ is not globally generic, this means they do not have a global Whittaker model. Therefore, a lot of the theory about cuspidal automorphic representations of $\GL_2(\A)$ are not applicable.  However, it is known that every irreducible cuspidal automorphic representation of $\GSp_4(\A)$ has a Bessel Model \cite[Chapter 8]{smf}. This means we need to reformulate our methods to this model. This is just one possible way to look at our possible problems discussed, it could be that a direct approach, that is, directly studying $A(F|_k\sigma, T)$ is also possible. 

We start with the problem discussed in Chapter \ref{Chap3}. We can again look at the Fourier expansion of newforms at cusps.  From the $q$-expansion principle we know that the Fourier coefficients of $F$ generate a number field with those at a cusp being in a cyclotomic extension. In \cite[Section 3.3]{DPSS} the authors relate the Bessel model to the Fourier coefficients of $F$. So, one would need to study this for $F|_k\sigma$ with $\sigma\in\Sp_4(\Z)$. We would also need to study how $\tau\in\Aut(\C)$ acts on the local Bessel model. 

The difficulty with the work carried out in Chapter \ref{Chap2} in the setting of Siegel modular forms is obtaining explicit formulas akin to those in Section \ref{formulas}. The reason for this is that these formulas come from evaluating 
\begin{align*}
\sum_{t=-\infty}^\infty q^{(t+c(\chi\pi))(\frac{1}{2}-s)}\ctl&=\cc(-1)\eps(1/2,\chi\pi)^{-1}\frac{L(s,\chi\pi)}{L(1-s,\chi^{-1}\cc^{-1}\pi)}\\
& \quad \cdot \sum_{m=0}^\infty W_\pi(a(\varpi^m))q^{-m(\frac{1}{2}-s)}G(\varpi^{m-l},\chi^{-1}).
\end{align*} 
This identity is given in \cite[Proposition 2.23]{Sah16}. The way that they establish this result is by the local functional equation described by Jacquet and Langlands in \cite{JL70}. In the setting of Siegel modular the difficulty lies in reproducing such an identity. So, the first (and large) step would be to reproduce the work of \cite{ASS19} and \cite{Sah16}. The number of representations for $\GSp_4$ would not be as much of an issue and in \cite{newforms}, the authors have tables for $L$-factors and $\eps$-factors Tables A8 and A9 respectively. If again, we want to then study the $p$-adic valuation of the Fourier coefficients of Siegel modular forms at cusps one would need an explicit formula relating the $p$-adic valuation of the adelic object to that of the Fourier coefficients.


\pagestyle{plain}



\bibliographystyle{plain}
\cleardoublepage 
\phantomsection  

\end{document}